\numberwithin{equation}{section}
\theoremstyle{plain}
\newtheorem{maintheorem}{Theorem}
\newtheorem{maincorollary}[maintheorem]{Corollary}
\newtheorem{theorem}{Theorem}[section]
\newtheorem{proposition}[theorem]{Proposition}
\newtheorem{lemma}[theorem]{Lemma}
\newtheorem{claim}[theorem]{Claim}
\newtheorem{remark}[theorem]{Remark}
\theoremstyle{definition}
\newtheorem{definition}{Definition}
\newtheorem{conjecture}{Conjecture}
\newcommand{\dem}{\begin{proof}}
\newcommand{\cqd}{\end{proof}}
\renewcommand{\angle}[1]{\sphericalangle(#1)}
\newcommand{\RR}{{\mathbb R}}
\newcommand{\CC}{{\mathbb C}}
\newcommand{\ZZ}{{\mathbb Z}}
\newcommand{\sS}{{\mathbb S}}
\newcommand{\D}{\EuScript{D}}
\newcommand{\cO}{\EuScript{O}}
\newcommand{\V}{\EuScript{V}}
\newcommand{\U}{\EuScript{U}}
\newcommand{\G}{\EuScript{G}}
\newcommand{\X}{\EuScript{X}}
\newcommand{\cC}{\EuScript{C}}
\newcommand{\C}{{\mathcal C}}
\newcommand{\cU}{{\mathcal U}}
\newcommand{\cW}{{\mathcal W}}
\newcommand{\ov}{\overline}
\newcommand{\wt}{\widetilde}
\newcommand{\wh}{\widehat}
\renewcommand{\epsilon}{\varepsilon}
\newcommand{\qand}{\quad\text{and}\quad}
\DeclareMathOperator{\inte}{int}
\DeclareMathOperator{\diam}{diam}
\DeclareMathOperator{\spec}{sp}
\DeclareMathOperator{\dist}{dist}
\DeclareMathOperator{\per}{Per}
\DeclareMathOperator{\sing}{Sing}
\DeclareMathOperator{\crit}{Crit}
\DeclareMathOperator{\lip}{Lip}
\DeclareMathOperator{\close}{Closure}
\DeclareMathOperator{\indx}{Ind}
\DeclareMathOperator{\diver}{div}
\DeclareMathOperator{\sv}{sv}
\newcommand{\m}{{\rm Leb}}
\title[Sectional-hyperbolic expansiveness]{On robust expansiveness for
  sectional hyperbolic attracting sets}
\date{\today}
\author[V. Araujo]{Vitor Araujo}
\email{vitor.araujo.im.ufba@gmail.com, vitor.d.araujo@ufba.br}
\urladdr{https://sites.google.com/site/vdaraujo99/}
\address{Instituto de Matem\'atica e Estat\'{\i}stica,
  Universidade Federal da Bahia, Av. Ademar de Barros s/n,
  40170-110 Salvador, Brazil.}
\author[J. Cerqueira]{Junilson Cerqueira}
\email{junilson.cerqueira@ifba.edu.br, junilson\_cerqueira@hotmail.com}
\address{Instituto Federal de Educa\c{c}\~{a}o Ci\^{e}ncia e Tecnologia da Bahia-
  IFBA/Euclides da Cunha, Av. Apromiano Campus, n 900,BR 116 - Km 220
  48500-000 Euclides da Cunha, Brazil.}
\thanks{V.A. was partially supported by CNPq (Grant
  301392/2015-3) and FAPESB-Brazil; and J.C. was partially
  supported by CAPES-Brazil and FAPESB-Bahia, Brazil.}
\keywords{sectional-hyperbolicity, robust expansiveness,
  strong dissipativity, star flow, robust transitivity,
  robust chaotic, attracting sets}
\subjclass[2010]{Primary: 37C10. Secondary: 37D30, 37D50.}
\begin{document}

\begin{abstract} 
  We prove that sectional-hyperbolic attracting sets for $C^1$ vector
  fields are robustly expansive (under an open technical condition of
  strong dissipativeness for higher codimensional cases). This extends
  known results of expansiveness for singular-hyperbolic attractors in
  $3$-flows even in this low dimensional setting. We deduce a converse
  result taking advantage of recent progress in the study of star
  vector fields: a robustly transitive attractor is
  sectional-hyperbolic if, and only if, it is robustly expansive. In a
  low dimensional setting, we show that an attracting set of a
  $3$-flow is singular-hyperbolic if, and only if, it is robustly
  chaotic (robustly sensitive to initial conditions).
\end{abstract}

\maketitle

\tableofcontents

\section{Introduction}
\label{sec:introduction}

The theory of uniformly hyperbolic dynamics was initiated in the 1960s
by Smale \cite{Sm67} and, through the work of his students and
collaborators, as well as mathematicians in the Russian school
(e.g.~\cite{AP37,An67}), led to a great development of the field of
dynamical systems.  This elegant theory did not cover which turned out
to be important classes of dynamical systems: the most influential
examples being, arguably, the H{\'e}non map \cite{He76}, for the
discrete time case; and the Lorenz flow \cite{Lo63}, for the
continuous time case.

To extend the notion of uniform hyperbolicity to encompass
sets containing equilibria accumulated by recurrent orbits,
a fundamental step was given by Morales, Pacifico, and
Pujals in \cite{MPP98,MPP04}. There they proved that a
robustly transitive invariant attractor of a $3$-dimensional
flow that contains some equilibria must be singular
hyperbolic, i.e., it admits an invariant splitting
$E^s\oplus E^{cu}$ of the tangent bundle into a
$1$-dimensional uniformly contracting sub-bundle and a
$2$-dimensional volume-expanding sub-bundle.  

The first examples of singular hyperbolic sets included the
Lorenz attractor \cite{Lo63,LY85} and its geometric models
\cite{Gu76, ABS77, GW79,Wil79}, and the singular-horseshoe
\cite{LP86}, besides the uniformly hyperbolic sets
themselves. Many other examples have been found
e.g. \cite{MPS05, MPu97, MPP00,Morales07}. For arbitrary
dimensions this notion was extended first in~\cite{MeMor08}
by Metzger and Morales, and the first concrete example
provided by Bonatti, Pumari\~no and Viana in~\cite{BPV97}.
These are \emph{sectional-hyperbolic} attractors, where now
the splitting $E^s\oplus E^{cu}$ of the tangent bundle can
have $d_{cu}=\dim E^{cu}\ge2$, $d_s=\dim E^s\ge1$ and the
area along any $2$-subspace of $E^{cu}$ is uniformly
expanded by the tangent map of the flow.

In the absence of equilibria, both singular-hyperbolic sets
and sectional-hyperbolic sets are uniform hyperbolic. It is
natural to try to understand the dynamical consequences of
sectional hyperbolicity.

In \cite{APPV} the authors prove that all
singular-hyperbolic \emph{attractors} are \emph{expansive},
meaning, roughly, that any pair of orbits which remain close
at all times must actually coincide. There are different
notions of expansiveness and similar, as ''kinematic
expansive'' which are considered in \cite{BoWa72} and
\cite{KS79} and explored in \cite{CerLew10} and
\cite{artigue_2016}; and also ``rescaled expansivity''
\cite{Wen2018}. Here we focus on the one introduced by
Komuro \cite{Km84} to be compatible with the dynamics of the
geometric Lorenz attractor.

Here, building on the work \cite{APPV} and more recently
\cite{ArMel17, ArMel18, ArSzTr}, we extend the expansiveness property
obtained in \cite{APPV} from sectional-hyperbolic \emph{attractors} to
sectional-hyperbolic \emph{attracting sets}, extending the previous
result even in the $3$-dimensional case, avoiding the assumption of
the existence of a dense regular orbit in the set.

Moreover, we show that sectional-hyperbolic attracting sets are $C^1$
\emph{robustly expansive}: we can find uniform bounds on the distance
between pairs of orbits for any given $C^1$ nearby vector field so
that the orbits must coincide.  When $d_{cu}>2$ we need to assume a
\emph{strong dissipativity} condition on the vector field in a
neighborhood of the attracting set, which is still a $C^1$ open
condition.

The main tool of the proof is the construction of a global
Poincar\'{e} return map to a suitably chosen family of
cross-sections of the flow near the attracting set,
extending the constructions from \cite{APPV, ArMel18,
  ArSzTr} to any dimension $d_{cu}>2$.

We then explore some consequences of our results. It is
well-known that expansiveness implies $h$-expansiveness
(entropy expansiveness) \cite{Bowen72} and then the
semicontinuity of the entropy function ensuring the
existence of equilibrium states for all continuous
potentials. Recently \cite{PaYaYa} entropy-expansiveness has
been proved more directly for sectional-hyperbolic sets and
used to obtain several ergodic theoretical results.

In addition, robust expansiveness implies that the vector
field is a \emph{star vector field}, and this class has many
important features. Building on recent work from Wen
\cite{GW2006} together with Shi and Gan \cite{SGW14} we
obtain partial converses of the main result. Namely, \emph{a
  robustly expansive non-singular vector field is uniformly
  hyperbolic}; and \emph{a robustly transitive attractor is
  sectional-hyperbolic if, and only if, it is robustly
  expansive}.

Moreover, for $3$-flows we equate robust expansiveness and robust
sensitivity to initial conditions, which we denominate \emph{chaotic}
behavior, to obtain that \emph{an attracting set of a $3$-flow is
  singular-hyperbolic if, and only if, it is robustly chaotic}.
Robust expansivity is also useful to obtain stability of asymptotic
sojourn times given by physical measures, as recently explored in
\cite{araJSP21}.


\section{Statement of the results}
\label{sec:statement-results}

Let $M$ be a compact connected Riemannian manifold with dimension
$\dim M=m$, induced distance $d$ and volume form $\m$. Let $\X^r(M)$,
$r\ge1$, be the set of $C^r$ vector fields on $M$ and denote by
$\phi_t$ the flow generated by $G\in\X^r(M)$.

\subsection{Sectional-hyperbolic attracting sets}
\label{sec:PH}

An \emph{invariant set} $\Lambda$ for the flow $\phi_t$ is a
subset of $M$ which satisfies $\phi_t(\Lambda)=\Lambda$ for
all $t\in\RR$.  Given a compact invariant set $\Lambda$ for
$G\in \X^r(M)$, we say that $\Lambda$ is \emph{isolated} if
there exists an open set $U\supset \Lambda$ such that
$ \Lambda =\bigcap_{t\in\RR}\close{\phi_t(U)}$.  If $U$ can
be chosen so that $\close{\phi_t(U)}\subset U$ for all
$t>0$, then we say that $\Lambda$ is an \emph{attracting
  set} and $U$ a \emph{trapping region} (or \emph{isolated
  neighborhood}) for
$\Lambda=\Lambda_G(U)=\cap_{t>0}\close{\phi_t(U)}$.

For a compact invariant set $\Lambda$, we say that $\Lambda$ is {\em
  partially hyperbolic} if the tangent bundle over $\Lambda$ can be
written as a continuous $D\phi_t$-invariant sum
$ T_\Lambda M=E^s\oplus E^{cu}, $ where $d_s=\dim E^s_x\ge1$ and
$d_{cu}=\dim E^{cu}_x\ge2$ for $x\in\Lambda$, and there exist
constants $C>0$, $\lambda\in(0,1)$ such that for all $x \in \Lambda$,
$t\ge0$, we have
\begin{itemize}
\item \emph{uniform contraction along} $E^s$:
  $ \|D\phi_t | E^s_x\| \le C \lambda^t; $ and
\item \emph{domination of the splitting}:
  $ \|D\phi_t | E^s_x\| \cdot \|D\phi_{-t} | E^{cu}_{\phi_tx}\|
  \le C \lambda^t.  $
\end{itemize}
We say that $E^s$ is the \emph{stable bundle} and $E^{cu}$
the \emph{center-unstable bundle}.  A {\em partially
  hyperbolic attracting set} is a partially hyperbolic set
that is also an attracting set.

We say that the center-unstable bundle $E^{cu}$ is
\emph{volume expanding} if there exists $K,\theta>0$ such
that $|\det(D\phi_t| E^{cu}_x)|\geq K e^{\theta t}$ for all
$x\in \Lambda$, $t\geq 0$. More generally, $E^{cu}$ is {\em
  sectional expanding} if for every two-dimensional subspace
$P_x\subset E^{cu}_x$,
\begin{align} \label{eq:sectional} |\det(D\phi_t(x)\mid P_x
  )| \ge K e^{\theta t}\quad\text{for all $x \in \Lambda$,
    $t\ge0$}.
\end{align}

If $\sigma\in M$ and $G(\sigma)=0$, then $\sigma$ is called an {\em
  equilibrium} or \emph{singularity} in what follows and we denote by
$\sing(G)$ the family of all such points. We say that a singularity
$\sigma\in\sing(G)$ is \emph{hyperbolic} if all the eigenvalues of
$DG(\sigma)$ have non-zero real part.

A point $p\in M$ is \emph{periodic} for the flow $\phi_t$ generated by
$G$ if $G(p)\neq\vec0$ and there exists $\tau>0$ so that
$\phi_\tau(p)=p$; its orbit
$\cO_G(p)=\phi_{\RR}(p)=\phi_{[0,\tau]}(p)$ is a \emph{periodic
  orbit}, an invariant simple closed curve for the flow. The family of
periodic orbits of $G$ is written $\per(G)$.

The \emph{critical elements} $\crit(G)$ of a vector field $G$ are its
equilibria and periodic orbits, that is,
$\crit(G)=\sing(G)\cup\per(G)$.  An invariant set is \emph{nontrivial}
if it is not a critical element of the vector field.

We say that a compact invariant set $\Lambda$ is a
\emph{sectional hyperbolic set} if $\Lambda$ is partially
hyperbolic with sectional expanding center-unstable bundle
and all equilibria in $\Lambda$ are hyperbolic.  A singular
hyperbolic set which is also an attracting set is called a
{\em sectional hyperbolic attracting set}.

A \emph{singular hyperbolic set} is a compact invariant set
$\Lambda$ which is partially hyperbolic with volume
expanding center-unstable subbundle and all equilibria
within the set are hyperbolic. A sectional hyperbolic set is
singular hyperbolic and both notions coincide if, and only
if, $d_{cu}=2$.

A sectional hyperbolic set with no equilibria is necessarily a
\emph{hyperbolic set}, that is, the central unstable subbundle admits
a splitting $E^{cu}_x=\RR\{G(x)\}\oplus E^u_x$ for all $x\in\Lambda$
where $E^u_x$ is uniformly contracting under the time reversed flow;
see e.g.~\cite{AraPac2010}. That is, $\Lambda$ is a \emph{hyperbolic
  set if by definition} $T_\Lambda M=E^s\oplus\RR\{G\}\oplus E^u$.

A \emph{periodic orbit $\cO_G(p)$ is hyperbolic} if $\cO_G(p)$ is a
hyperbolic subset for $G$. If moreover $E^u$ is trivial (i.e.
$E^u_q=\{\vec0\}, q\in\cO_G(p)$), then the periodic orbit is a
\emph{periodic sink}.

A singular hyperbolic attracting set cannot contain isolated periodic
orbits.  For otherwise such orbit must be a periodic sink,
contradicting volume expansion.

We recall that a subset $\Lambda \subset M$ is
\emph{transitive} if it has a full dense orbit, that is,
there exists $x\in \Lambda$ such that
$\close{\{\phi_tx:t\ge0\}}=\Lambda=
\close{\{\phi_tx:t\le0\}}$.

A nontrivial transitive sectional hyperbolic attracting set is a
\emph{sectional hyperbolic attractor}.

The prototype of a sectional-hyperbolic attractor for
$3$-flows is the Lorenz attractor; see
e.g. \cite{Lo63,Tu99,AraPac2010}. For higher dimensional
flows we have the multidimensional Lorenz attractor: see
\cite{BPV97}. More examples are indicated in
Remarks~\ref{rmk:ds1} and~\ref{rmk:notLorenzlike} and many
more in\cite{Morales07}.

\subsection{Robust expansiveness for codimension-two
  sectional-hyperbolic attracting sets}

The flow is \emph{sensitive to initial conditions} if there
is $\delta>0$ such that, for any $x\in M$ and any
neighborhood $N$ of $x$, there is $y\in N$ and $t\in\RR$
such that $d(\phi_t(x), \phi_t(y))>\delta$.  We shall work
with a much stronger property.

\begin{definition}\label{def:expansiveness}
  Denote by $S(\RR)$ the set of surjective increasing continuous
  functions $h:\RR\to\RR$. We say that the flow is \emph{expansive} if
  for every $\epsilon>0$ there is $\delta>0$ such that, for any
  $h\in S(\RR)$
  \begin{align*}
    d(\phi_t(x),\phi_{h(t)}(y))\leq\delta,\quad \forall t\in\RR
    \implies
    \exists t_0\in\RR \text{ such that }
    \phi_{h(t_0)}(y)\in \phi_{[t_0-\epsilon,t_0+\epsilon]}(x).
  \end{align*}
We say that a invariant compact set $\Lambda$ is expansive if the
restriction of $\phi_t$ to $\Lambda$ is an expansive flow.
\end{definition}

This notion was proposed by Komuro in \cite{Ko84}.  We
consider a robust version.

\begin{definition}\label{def:robexpansiveness}
  We say that the vector field $G$ is \emph{robustly expansive} on an
  attracting set $\Lambda=\cap_{t>0}\ov{\phi_t(U})$ if there exists a
  neighborhood $\V$ of $G$ in $\X^1(M)$ such that for every
  $\epsilon>0$ there is $\delta>0$ such that, for any
  $x,y\in\Lambda_Y=\Lambda_Y(U)=\cap_{t>0}\ov{\psi_t(U)}$,
  $h\in S(\RR)$ and $Y\in\V$
  \begin{align*}
    d(\psi_t(x),\psi_{h(t)}(y))\leq\delta,\quad \forall t\in\RR
    \implies
    \exists t_0\in\RR \text{ such that }
    \psi_{h(t_0)}(y)\in \psi_{[t_0-\epsilon,t_0+\epsilon]}(x)
  \end{align*}
where $\psi_t$ is the flow generated by $Y$.
\end{definition}

Our results show that a sectional hyperbolic attracting set $\Lambda$
is robustly expansive.

\begin{maintheorem}\label{mthm:principal1}
Every sectional hyperbolic attracting set of a vector field
$G\in\EuScript{X}^1(M)$, with $d_{cu}=2$, is $C^1$ robustly expansive.
\end{maintheorem}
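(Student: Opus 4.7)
The plan is to reduce the $C^1$-robust expansiveness of $\Lambda$ to the hyperbolicity of a global Poincar\'e return map defined on a carefully chosen family of cross-sections, together with uniform control of this construction across a $C^1$-neighborhood $\V$ of $G$.

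First I would use that sectional hyperbolicity of an attracting set with a fixed trapping region $U$ is a $C^1$-open condition: shrinking $\V\subset\X^1(M)$ if necessary, for every $Y\in\V$ the maximal invariant set $\Lambda_Y=\Lambda_Y(U)$ is sectional hyperbolic with uniform constants $C,\lambda,K,\theta$ and an invariant continuous splitting $T_{\Lambda_Y}M=E^s_Y\oplus E^{cu}_Y$ depending continuously on $Y$. Since $d_{cu}=2$, $\Lambda_Y$ is in fact singular hyperbolic, and every singularity $\sigma\in\Lambda_Y$ is hyperbolic with a one-dimensional local unstable manifold of Lorenz-like type; the local stable and unstable manifolds of $\sigma$ depend continuously on $Y$ in the $C^1$ topology, so the geometric structure near singularities is robust.

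Next, extending the constructions of \cite{APPV,ArMel18,ArSzTr}, for each $Y\in\V$ I would build a finite family of adapted cross-sections $\Sigma_Y=\bigsqcup_i\Sigma_i$: around each singularity $\sigma\in\Lambda_Y$ a singular cross-section whose two connected components $\Sigma^{\pm}_\sigma$ lie on opposite sides of $W^s_{loc}(\sigma)$, and away from singularities a finite collection of regular cross-sections whose flow saturation, together with the singular ones, covers a neighborhood of $\Lambda_Y$. Each $\Sigma_i$ carries a continuous codimension-one stable foliation induced by the restrictions $W^s_{loc}(x)\cap\Sigma_i$, and the return map $P_Y:\Sigma_Y\setminus\Gamma_Y\to\Sigma_Y$ is well defined outside a finite union $\Gamma_Y$ of stable leaves. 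From partial hyperbolicity and domination, I would deduce that $P_Y$ preserves the stable foliation and uniformly contracts its leaves by some factor $\mu<1$, while the one-dimensional direction transverse to the foliation (available because $d_{cu}-1=1$) is uniformly expanded by some $\kappa>1$; an adapted metric and cone-field arguments make all these bounds uniform in $\V$, so that the quotient map on leaves is a piecewise uniformly expanding one-dimensional map.

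Assuming, for contradiction, that robust expansiveness fails, one obtains sequences $Y_n\to G$, $h_n\in S(\RR)$ and $x_n,y_n\in\Lambda_{Y_n}$ with $d(\psi^n_t(x_n),\psi^n_{h_n(t)}(y_n))<1/n$ for all $t$, but $\psi^n_{h_n(t_0)}(y_n)\notin\psi^n_{[t_0-\epsilon,t_0+\epsilon]}(x_n)$ for every $t_0$ and a fixed $\epsilon>0$. If both orbits intersect $\Sigma_{Y_n}$ infinitely often, their successive returns stay arbitrarily close, so uniform expansion of the quotient map forces them onto the same stable leaf of $P_{Y_n}$, and uniform stable contraction in forward flow time then pushes the two orbits onto the same orbit up to an $\epsilon$-reparametrization, contradicting the assumed failure. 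If instead one of the orbits accumulates on a singularity, the Lorenz-like local normal form, together with the freedom to choose $h_n$, forces the other orbit to accompany it along the corresponding local stable manifold, reducing to the previous case. The main obstacle I expect is ensuring the uniformity in $\V$ of every quantitative ingredient (the cross-section geometry and their stable foliations, the expansion/contraction rates of $P_Y$, the size and position of the singular cross-sections, and the local invariant manifolds of singularities); allowing arbitrary continuous monotone reparametrizations $h$ in Definition~\ref{def:robexpansiveness} adds further subtlety near singularities, where the time a close orbit spends can be essentially unbounded.
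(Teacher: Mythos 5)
Your proposal follows essentially the same strategy as the paper: build a global Poincar\'e return map on a finite family of adapted cross-sections carrying a contracting stable foliation, establish piecewise uniform hyperbolicity with one-dimensional transverse expansion (crucially using $d_{cu}=2$), and argue by contradiction that two orbits remaining $\delta$-close for all forward time must land on the same stable leaf at some return, from which expansiveness follows via the transversality of the flow to the stable foliation; robustness comes from the uniformity of every constant in the construction over a $C^1$-neighborhood of $G$. The one step you leave implicit---deducing the orbit identification up to an $\epsilon$-reparametrization once the stable leaf is shared, and in particular handling the possibility of unbounded return times near Lorenz-like singularities---is exactly what the paper resolves via the backward-contraction argument and the cross-section placement around $\gamma^s_\sigma$, but the overall route is the same.
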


\subsection{Robust expansiveness for higher codimension}
\label{sec:robust-expans-higher}

In the higher condimension case $d_{cu}>2$, we need to assume that
$\Lambda$ satisfies a ``strongly dissipative'' condition, which is
equivalent to a bunching condition on the partially hyperbolic
splitting, but simpler to check for flows induced by vector fields.
We implicitly assume without loss of generality that the compact
manifold $M$ is embedded in an Euclidian space to simplifiy the
statement of this condition.

\begin{definition}\label{def:qstrongdiss}
  Let us fix $q>1/d_s$. We say that a partially hyperbolic attracting
  set $\Lambda$ is \emph{$q$-strongly dissipative} if
  \begin{itemize}
  \item[(a)] for every equilibria $\sigma\in\Lambda$ (if any), the
    eigenvalues $\lambda_j$ of $DG(\sigma)$, ordered so
    that\footnote{Here $\Re s$ denotes the real part of $z\in\CC$.}
    $\Re\lambda_1\leq\Re\lambda_2\leq\cdots\le \Re\lambda_d$, satisfy
    $\Re(\lambda_1-\lambda_{d_s+1}+q\lambda_d)<0$;
  \item[(b)]
    $\sup_{x\in\Lambda}\{\diver G(x)+(d_sq-1)\|(DG)(x)\|_2\}<0$, where
    $\|\cdot\|_2$ denotes the matricial norm given by
    $\|A\|_2=(\Sigma_{ij}a^2_{ij})^\frac{1}{2}$ for a matrix
    $A_{d\times d}$.
  \end{itemize}
\end{definition}
This condition was introduced in~\cite{ArMel17} where it was shown to
imply that the stable foliation associated to the partial hyperbolic
attracting set extends to a $C^1$-smooth topological foliation of the
basin of attraction of $\Lambda$.

\begin{maintheorem}\label{mthm:principal2}
  Every sectional hyperbolic attracting set of $1$-strongly
  dissipative vector field $G\in\X^1(M)$ is $C^1$ robustly
  expansive.
\end{maintheorem}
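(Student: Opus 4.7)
The plan is to reduce Theorem~B to the strategy already outlined for Theorem~A (the $d_{cu}=2$ case, proved via a global Poincaré return map as in \cite{APPV, ArMel18, ArSzTr}) by exploiting the $C^1$-smooth topological extension of the stable foliation that the $1$-strong dissipativity hypothesis provides, via the cited result of Araújo--Melbourne \cite{ArMel17}. First, I would fix a trapping region $U$ for $\Lambda$ and a $C^1$-neighborhood $\V$ of $G$ small enough that, for every $Y\in\V$: (i) the sectional-hyperbolic splitting $E^s\oplus E^{cu}$ persists on $\Lambda_Y(U)$ with uniform constants, (ii) the $1$-strong dissipativity condition of Definition~2.1 remains in force (both clauses are $C^1$-open), and so by \cite{ArMel17} the stable foliation $\F^s_Y$ of $\Lambda_Y(U)$ extends to a $C^1$ topological foliation $\widehat\F^s_Y$ of $U$, varying continuously with $Y$.

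Using $\widehat\F^s_Y$, I would build a finite family $\Xi_Y=\{\Sigma_i\}$ of cross-sections transverse to $Y$ whose leaves are foliated by plaques of $\widehat\F^s_Y$, with appropriate entering/exiting sections near every hyperbolic equilibrium $\sigma\in\Lambda_Y\cap\sing(Y)$, and so that every orbit in $\Lambda_Y(U)$ hits $\Xi_Y$ in bounded time. The associated Poincaré map $R_Y:\Xi_Y\to\Xi_Y$ preserves the stable foliation, so it quotients to a map $\hat R_Y$ on a $(d_{cu}-1)$-dimensional space $\hat\Xi_Y$. The sectional expansion \eqref{eq:sectional}, together with domination and the uniform bound on return times away from singularities, translates into an \emph{area expansion} of $\hat R_Y$ along every $2$-plane in $T\hat\Xi_Y$; combined with contraction of stable plaques by a definite factor under $R_Y$, this yields the hyperbolic product structure needed for the expansiveness argument. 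All constants in the above can be chosen uniformly for $Y\in\V$.

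To deduce expansiveness, suppose $d(\psi_t(x),\psi_{h(t)}(y))\le\delta$ for all $t\in\RR$ with $x,y\in\Lambda_Y(U)$. The geometry of the cross-sections and boundedness of return times let me extract a synchronized sequence of return instants and iterates $R_Y^n(x_0),R_Y^n(y_0)$ that stay in a small neighborhood of each other. Uniform contraction of stable plaques under forward iteration of $R_Y$, applied backward in time along pre-images, forces $y_0$ to lie in the same stable plaque of $x_0$; conversely, the area expansion of $\hat R_Y$ forward in time prevents two distinct points of $\hat\Xi_Y$ from shadowing each other under all positive iterates. Hence $y$ lies on the strong-stable leaf through $x$ (for the flow), and a standard argument for expansive flows on a single stable leaf places $\psi_{h(t_0)}(y)$ in $\psi_{[t_0-\epsilon,t_0+\epsilon]}(x)$ for some $t_0$, with $\delta=\delta(\epsilon)$ independent of $Y\in\V$ by the uniformity of the preceding estimates.

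The main obstacle, and the reason the strong dissipativity assumption enters, is the behavior near the Lorenz-like equilibria when $d_{cu}>2$: the local unstable manifold is now at least $2$-dimensional, so orbits entering a singular neighborhood close to each other along the stable direction may leave it along \emph{different} unstable directions, and no $1$-dimensional quotient map (as in the classical $3$-D Lorenz case) is available. The $C^1$ topological stable foliation afforded by $1$-strong dissipativity is precisely what makes the quotient $\hat R_Y$ well-defined and Lipschitz, allowing the inclination-style estimates of \cite{ArMel18, ArSzTr} to be implemented in any dimension $d_{cu}\ge 2$; verifying these estimates carefully in the neighborhood of each singularity, while keeping all constants uniform over $Y\in\V$, is the step requiring the most work.
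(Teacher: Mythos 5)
Your overall strategy matches the paper's: construct a global Poincar\'e return map on a finite family of cross-sections adapted to the stable foliation, quotient by that foliation, and use the $C^1$ smoothness of the stable foliation afforded by $1$-strong dissipativity (via \cite{ArMel17}) to control distances in the $(d_{cu}-1)$-dimensional quotient and close the argument by contradiction. The motivation you give for why the extra hypothesis is needed when $d_{cu}>2$ (helical $cu$-curves decouple length from transverse distance) is exactly right, and robustness of the hypotheses is correctly identified as $C^1$-open.

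There is, however, a genuine gap in the hyperbolicity you claim for the quotient map. You state that sectional expansion \eqref{eq:sectional} ``translates into an \emph{area} expansion of $\hat R_Y$ along every $2$-plane in $T\hat\Xi_Y$'' and then assert that this ``prevents two distinct points of $\hat\Xi_Y$ from shadowing each other under all positive iterates.'' The second claim does not follow from the first. When $\dim T\hat\Xi_Y = d_{cu}-1\ge 2$, a linear map may expand area along every $2$-plane while still contracting a direction (e.g.\ singular values $\sigma_1<1<\sigma_2$ with $\sigma_1\sigma_2>1$); two quotient points separated along that contracting direction would stay close under all iterates, and the contradiction you want never materialises. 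What the paper establishes --- and what your argument actually needs --- is \emph{full uniform expansion} of the quotient map: $\|Df(x)^{-1}\|\le \mu^{-1}<1$ (Lemma~\ref{le:quotientexp}, via Proposition~\ref{prop:sec-cone}). That stronger conclusion does follow from sectional expansion of the \emph{flow}, because any tangent vector $w$ in the cross-section's $cu$-direction, paired with the flow vector $G(x)$, spans a $2$-plane inside $E^{cu}$; sectional area expansion along that plane plus the two-sided bound on $\|G\|$ near $\Xi$ and the bounded return time force $\|DR(x)w\|\ge \lambda_1^{-1}\|w\|$. Once you have full expansion of $f$, you then need the explicit two-sided comparison between the Riemannian distance of stable leaves and the Euclidean distance in the $C^1$ chart (Lemma~\ref{le:legthvsdisthighdcu}); this is the precise technical dividend of $1$-strong dissipativity, and your write-up only gestures at it by calling the quotient ``Lipschitz.'' With full expansion and that distance comparison in hand, the induction $d\bigl(W^s_{x_j},W^s_{y_j}\bigr)\ge \omega^j\, d\bigl(W^s_{x_0},W^s_{y_0}\bigr)$ goes through and contradicts $d(x_j,y_j)\le\delta_0$.

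A smaller but also relevant point: the argument in the paper does not run ``backward in time along pre-images'' to force $y_0$ into the stable plaque of $x_0$; it is a forward induction by contradiction (if the two orbits are never on a common stable leaf, distances between leaves grow geometrically). And one technicality you omit is what happens when $x_j,y_j$ fall in different smooth strips of $R_Y$: the paper handles this by passing to the smooth extensions $\wt R_i$ and $\wt f_i$ on enlarged strips (Remarks~\ref{rmk:existensions}, \ref{rmk:smoothextquotient}, \ref{rmk:obsadj}), which keeps the uniform expansion estimate valid.
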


\begin{remark}
  \label{rmk:ds1}
  The multidimensional Lorenz class of examples introduced in
  \cite{BPV97} provides classes of sectional-hyperbolic attractors for
  each choice of $d_{cu}\ge2$ and $d_s\ge2$; and also an example with
  $d_{cu}=3$ and $d_s=1$. There are plenty of singular-hyperbolic
  examples: see e.g.~\cite{Morales07} and references therein.

  In the the cases $d_s=1$ with $d_{cu}>2$ the $1$-strongly
  dissipative assumption is interpreted to mean ``$q$-strongly
  dissipative for some $q>1$''.
\end{remark}

Since we need the strong dissipativeness condition for
technical reasons, we naturally pose the following.

\begin{conjecture}\label{conj:nostrongdiss}
  Theorem~\ref{mthm:principal2} is still true for all $C^1$
  vector fields exhibiting a sectional hyperbolic attracting set.
\end{conjecture}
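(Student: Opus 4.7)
The plan is to remove the strong dissipativity hypothesis of Theorem~\ref{mthm:principal2} by replacing the $C^1$-smooth extension of the stable foliation that this hypothesis provides through \cite{ArMel17} with the classical continuous stable lamination that every partially hyperbolic attracting set carries on its isolating neighborhood. The overall scheme of the proof of Theorem~\ref{mthm:principal2} would be preserved: construct adapted cross-sections near regular points and specially shaped ``singular'' cross-sections near each hyperbolic equilibrium, glue them into a global Poincar\'e return system, quotient each cross-section by its stable plaques, and deduce Komuro-style expansiveness from sectional expansion of the induced quotient dynamics together with hyperbolicity of equilibria.

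The first step is to replace the ambient $C^1$ foliation by the stable lamination obtained from the local stable manifold theorem, saturated by the backward flow on a trapping region $U$ for $\Lambda$. On each smooth $(d-1)$-dimensional cross-section $\Sigma$ transverse to $G$ this yields a continuous lamination by $C^1$ discs of dimension $d_s$ tangent to $E^s$, with H\"older holonomy between transverse discs. The quotient $\Sigma/\!\sim$ by stable plaques is then a topological space of dimension $d_{cu}-1$, and the Poincar\'e return map descends to a continuous self-map of such quotients. This portion of the argument does not use strong dissipativity and is valid in any codimension.

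The second step is to upgrade the quotient return map to an expanding map, and this is where I would depart most sharply from \cite{ArMel18, ArSzTr}. Since $E^{cu}$ is only defined on $\Lambda$, not on the whole of $U$, I would work with centre-unstable plaques obtained from the partially hyperbolic splitting extended to a small neighborhood of $\Lambda$ by cone fields, in the spirit of \cite{PaYaYa}. Sectional expansion along $E^{cu}$ then translates into uniform expansion of $2$-dimensional areas on these plaques under the return map, \emph{without} requiring smoothness of the stable holonomy. With this in hand, the Komuro-type argument in \cite{APPV} can be adapted: two orbits of $\phi_t$ (or of $\psi_t$ for $Y\in\V$) that stay $\delta$-close under some reparametrization $h\in S(\RR)$ must project to orbits of the quotient return map that stay close forever, which, combined with the $2$-dimensional expansion, forces them to coincide on cu-plaques; uniform contraction along $E^s$ then produces the required time shift $t_0$.

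The hardest step is the passage from merely continuous stable holonomy to enough control on the quotient return map to run the expansion argument robustly in $C^1$. In the strongly dissipative case, this control comes essentially for free from the $C^1$ foliation, which gives bounded distortion for holonomies and a genuine $C^1$ quotient manifold. Without it, one would need either to prove Lipschitz or absolutely continuous regularity of the stable holonomies under sectional expansion alone (a delicate bunching-type question), or to avoid the quotient altogether by working with cu-plaques as cross-sections throughout, tracking how $2$-dimensional sectional areas are transported by the return map. A further subtlety is uniformity in $Y\in\V$: while the partially hyperbolic splitting and sectional expansion persist $C^1$-openly, the regularity estimates on laminations of $Y$ must be made uniform, typically via compactness in the $C^0$ topology of laminations. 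Overcoming these two difficulties would establish Conjecture~\ref{conj:nostrongdiss}.
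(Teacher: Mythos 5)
This statement is posed in the paper as a \emph{conjecture}, explicitly left open: the authors write that the strong dissipativeness hypothesis is needed ``for technical reasons'' and offer no proof. Your text is likewise not a proof but a research programme, and you say so yourself (``Overcoming these two difficulties would establish Conjecture~\ref{conj:nostrongdiss}''). So there is no complete argument to compare with the paper's, because neither you nor the paper supplies one.

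That said, it is worth recording where your plan genuinely stalls, because it is exactly the obstruction the paper isolates. The expansion of the return map on $E^u(\Sigma)=E^{cu}\cap T\Sigma$ is \emph{not} the problem: sectional expansion already yields the uniform hyperbolicity of $R$ (Proposition~\ref{prop:secUH}) and the expansion of $cu$-curve lengths without any dissipativity hypothesis, so your second step adds nothing new there. The real gap is the conversion of that infinitesimal/length expansion into growth of the \emph{distance between stable leaves} in the cross-section. When $d_{cu}=2$ the $cu$-cones have one-dimensional core and Lemma~\ref{le:lengthversusdistance} gives $\ell(\gamma)\le\kappa\, d(x,y)$; when $d_{cu}>2$ a $cu$-curve joining two leaves can be a tight helix (Figure~\ref{fig:1cucone}), so its length bears no relation to the leaf distance, and the induction $d(x_{j+1},\cW^s_{y_{j+1}})\ge 2\,d(x_j,\cW^s_{y_j})$ collapses. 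The paper's remedy is the bi-Lipschitz chart of Lemma~\ref{le:legthvsdisthighdcu}, built from the $C^1$ foliation of Theorem~\ref{thm:qdiss}, which allows the mean value theorem to be applied to the inverse of the quotient map; this is precisely what strong dissipativity buys. Replacing the $C^1$ foliation by the merely continuous stable lamination destroys this: the projection $\pi$ along stable plaques becomes only a homeomorphism (for a $C^1$ vector field one does not even get H\"older holonomies in general, contrary to what you assert), and conjugating a uniformly expanding map by a non-Lipschitz homeomorphism need not yield any expansion of distances. Your two suggested escape routes --- proving Lipschitz regularity of stable holonomies from sectional expansion alone, or tracking $2$-dimensional areas on $cu$-plaques directly --- are respectively an open bunching-type problem and an argument that still faces the helix obstruction, since area expansion on $2$-planes containing the flow direction controls lengths of $cu$-curves, not distances between leaves. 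Until one of these is actually carried out, the conjecture remains open.
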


\subsection{Some consequences of robust expansiveness}
\label{sec:some-conseq-robust}

We in fact obtain a slightly stronger result: the main
argument provides a proof of (robust) \emph{positive
  expansiveness}, that is, sectional-hyperbolic attracting
sets in the setting of Theorems~\ref{mthm:principal1}
and~\ref{mthm:principal2} satisfy: for each $\epsilon>0$ we
can find $\delta>0$ so that
\begin{align*}
  d(\phi_t(x),\phi_{h(t)}(y))\leq\delta,
  h\in S(\RR),\forall t>0
    \implies
    \exists t_0>0, s\in(-\epsilon,\epsilon):
    \phi_{h(t_0)}(y)\in \cW^s_{\phi_{t_0+s}(x)};
\end{align*}
where $\cW^s_z$ is the local stable manifold through points
$z\in U_0$, which are well-defined for partially hyperbolic
attracting sets; see
Section~\ref{sec:preliminary-results}. We note that a
slightly stronger notion of positive expansiveness (akin to
Bowen-Walters expansiveness) has been shown in
\cite{artigue_2014} to imply finitely many periodic orbits
only.

From positive expansiveness, provided by
Theorem~\ref{thm:expansivepoincare}, robust expansiveness
follows as explained in Section~\ref{sec:conclus-proof},
exploring the properties of stable manifolds of partially
hyperbolic sets.

This enable us to obtain partial converses to the statements
of the main Theorems~\ref{mthm:principal1}
and~\ref{mthm:principal2} extending the results
of~\cite{MPP99} by (roughly) reinforcing robust transitivity
with robust expansiveness.

We need the following standard notion. Given $G\in\X^1(M)$ and
$x\in M$ we denote the \emph{omega-limit} set
\begin{align*}
  \omega(x)=\omega_G(x)=\left\{y\in M: \exists t_n\nearrow\infty\text{  s.t.
  } \phi_{t_n}x\xrightarrow[n\to\infty]{}y \right\}
\end{align*}
and the \emph{alpha-limit} set $\alpha(x)=\omega_{-G}(x)$, which are
both non-empty on a compact ambient space $M$.

\subsubsection{Robust expansive flows are star flows}
\label{sec:robust-expans-flows}

A vector field $G\in \X^1(M)$ is a {\em star vector field} if there
exists a $C^1$ neighborhood $\U$ of $G$ such that every critical
element of every $Y\in \U$ is hyperbolic. The set of $C^1$ star vector
fields of $M$ is denoted by $\X^*(M)$. The following is an extension
of \cite{MorSakSun05} to robust expansive flows in the sense of Komuro
which encompasses singular flows; see e.g. \cite{Km84,APPV}. A proof
of this can be found e.g. in \cite[Theorem A]{teseSenos}.

\begin{theorem}\label{thm:star}
  A robustly expansive vector field $G\in\X^1(M)$ is a star vector
  field: $G\in\X^*(M)$.

  Moreover, if $G\in\X^1(M)$ is robustly expansive on the attracting
  set $\Lambda_X(U)$ with trapping region $U$, then $G$ is a star
  vector field in $U$: there exists a neighborhood $\U\subset\X^1(M)$
  of $G$ such that all critical elements of each $Y\in\U$ contained in
  $U$ are hyperbolic.
\end{theorem}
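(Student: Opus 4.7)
The plan is to argue by contradiction along the perturbative blueprint standard for star-flow characterizations. Suppose $G$ is robustly expansive on $\Lambda_G(U)$ with associated $C^1$ neighborhood $\V$, but fails to be a star field on $U$. Then there exist $Y_n\to G$ in $\V$ and non-hyperbolic critical elements $\gamma_n\subset U$ of $Y_n$. After passing to a subsequence, either (a) each $\gamma_n$ is a non-hyperbolic equilibrium $\sigma_n$, or (b) each $\gamma_n$ is a non-hyperbolic periodic orbit. In either case the aim is to apply a $C^1$-small bump perturbation producing $Z_n\in\V$ whose flow admits two distinct orbits remaining arbitrarily close for all time and not identifiable by any $h\in S(\RR)$ up to a bounded time shift, directly contradicting Definition~\ref{def:robexpansiveness}.

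In case (a), $DY_n(\sigma_n)$ has an eigenvalue with zero real part. Writing $Y_n(x)=DY_n(\sigma_n)(x-\sigma_n)+r_n(x)$ with $r_n(x)=O(|x-\sigma_n|^2)$, a bump-function cutoff of $r_n$ inside a small ball $B_{\epsilon_n}(\sigma_n)$ yields $Z_n\in\V$ whose flow is exactly linear on $B_{\epsilon_n/2}(\sigma_n)$. The linearized flow carries a center subspace on which the dynamics is either a pure rotation, giving a continuum of small concentric periodic orbits, or trivial, giving a continuum of equilibria; a preliminary Franks-type adjustment of the linear part reduces the Jordan cases to these two. Any two of the resulting orbits stay within a tube of radius $\ll\delta$ for all time and are distinct as subsets of $M$. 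Since no surjective increasing $h\in S(\RR)$ combined with a bounded time shift can match two distinct constant orbits, nor match a constant orbit to a nonconstant periodic one, robust expansiveness of $G$ fails at $Z_n$ for large $n$.

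In case (b), the linear Poincar\'e return map along $\gamma_n$ has an eigenvalue of modulus one. Franks' lemma applied in a tubular chart around $\gamma_n$ produces $Z_n\in\V$ whose return map near $\gamma_n$ is linear with a chosen eigenvalue exactly on the unit circle. The corresponding linear return map then exhibits either a curve of fixed points (eigenvalue $+1$), a curve of period-two points (eigenvalue $-1$), or an invariant circle (complex unit-modulus pair); each case produces either a second periodic orbit close to $\gamma_n$ or an invariant two-torus whose orbits accompany $\gamma_n$ within an arbitrarily thin tube for all time. This again contradicts robust expansiveness. All perturbations above can be supported in arbitrarily small neighborhoods of $\gamma_n\subset U$, so $Z_n\equiv G$ outside $U$, the constructed orbits lie in $\Lambda_{Z_n}(U)$, and the local statement follows; the first, global, assertion is the specialization $U=M$.

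The principal obstacle lies in the interplay between Komuro's allowance for arbitrary orientation-preserving reparametrizations $h\in S(\RR)$ and the singular case: one must verify that the two close orbits produced are genuinely distinct modulo reparametrization and bounded translation in time, not merely as parametrized curves. This is handled by always selecting pairs that are set-theoretically disjoint (two equilibria, two periodic orbits, or an equilibrium together with an enclosing periodic orbit), a property preserved under any continuous surjective $h$ and under bounded shifts in $t$. Combined with the uniform proximity supplied by the linear picture, this delivers the contradiction and establishes $G\in\X^*(M)$, as well as the refined conclusion that all critical elements of nearby vector fields inside $U$ are hyperbolic.
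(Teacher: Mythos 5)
The paper does not give its own proof of this theorem; it explicitly defers to \cite[Theorem~A]{teseSenos} and frames the statement as an extension of \cite{MorSakSun05} from $C^1$-stable Bowen--Walters expansiveness to Komuro expansiveness of singular flows. Against that background, your blueprint is the canonical one and, as far as I can tell, it is correct.

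The decisive points you get right are the following. First, the Franks-type argument (bump-cutoff linearization at equilibria, Franks' lemma for the linear Poincar\'e map along periodic orbits) is the right tool to reduce a non-hyperbolic critical element of some $Y_n\to G$ to the ``clean'' non-hyperbolic normal forms, and all the ensuing invariant sets (segment of equilibria, concentric circles, the invariant circle/torus for the period-doubling or Hopf-type case) are compact invariant subsets of the small ball or tube, hence automatically lie in $\Lambda_{Z_n}(U_0)$ since $U_0$ persists as a trapping region for nearby fields. Second, and crucially for Komuro expansiveness, you correctly identify that the violation must be phrased at the level of \emph{set-theoretic disjointness of orbits}: since the conclusion of Definition~\ref{def:expansiveness} asks $\phi_{h(t_0)}(y)\in\phi_{[t_0-\epsilon,t_0+\epsilon]}(x)$, two orbits which never intersect can never satisfy it for any $h\in S(\RR)$ or any $\epsilon$, while the linear picture forces them to remain within distance $d(x,y)$ for all time with an explicit $h$ (identity for the isometric center-dynamics; a standard return-time interpolation otherwise). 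This is exactly the correct adaptation of the Moriyasu--Sakai--Sun-type argument to the Komuro setting, where one must allow time-reparametrizations. Third, the localization of the perturbation inside $U$ is what delivers the ``star in $U$'' refinement, and the global statement is the trivial specialization.

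Two minor points worth tightening in a full write-up: (i) you should say explicitly that after a preliminary Franks perturbation one may assume the spectrum on the imaginary axis (resp.\ unit circle) is \emph{simple}, i.e.\ a single real eigenvalue $0$ or a single conjugate pair $\pm i\omega$ (resp.\ $\pm 1$ or a single conjugate pair on $\sS^1$), since the possibility of several distinct purely imaginary pairs is not literally a ``Jordan case''; and (ii) in the periodic case with an irrational rotation number, one should note that the constructed $2$-torus is a compact invariant set in $U$ and that two distinct orbits on it are disjoint and shadow each other after a reparametrization obtained by matching successive return times. Neither is a gap in the idea, just in the level of detail. Overall, this is a correct proof along the expected lines; it fills in what the paper outsources to \cite{teseSenos}.
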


This is a very strong condition for non-singular vector fields:
putting the last result together with \cite{GW2006} we obtain that
every robustly expansive non-singular vector field $G$ is an Axiom A
vector field satisfying the no-cycles condition.



\subsubsection{Robustly transitive and expansive attractors are
  sectional-hyperbolic}
\label{sec:robustly-transit-exp}

Using the recent developments in the study of singular star flows
from~\cite{SGW14} we are able to prove the following. We say that an
attractor $\Lambda_G(U_0)$ is \emph{robustly transitive} if there
exists a $C^1$ neighborhood $\U\subset\X^1(M)$ such that
$\Lambda_Y(U_0)$ is transitive for all $Y\in\U$.

\begin{maincorollary}\label{mcor:starattractsechyp}
  Every robustly transitive and expansive attractor of $G\in\X^1(M)$
  is sectional-hyperbolic.

  In particular, every robustly transitive attractor of a star vector
  field is sectional-hyperbolic.
\end{maincorollary}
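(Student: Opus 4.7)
The plan is to derive the corollary directly from Theorem \ref{thm:star} (robustly expansive $\Rightarrow$ star) combined with the classification theorem of Shi--Gan--Wen \cite{SGW14} for chain-recurrence classes of star vector fields. No new dynamical construction is required; the content is to check that the hypotheses of these two results can be concatenated on the trapping region of $\Lambda_G(U_0)$.

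First I would fix a trapping region $U_0$ for the robustly transitive, robustly expansive attractor $\Lambda=\Lambda_G(U_0)$. By robust transitivity there is a $C^1$-neighborhood $\U_1$ of $G$ so that $\Lambda_Y(U_0)$ is transitive for every $Y\in\U_1$, and by Theorem \ref{thm:star} (the local version) there is a neighborhood $\U_2$ of $G$ so that every critical element of every $Y\in\U_2$ contained in $U_0$ is hyperbolic. Set $\U=\U_1\cap\U_2$. Thus $G$ is a star vector field \emph{in the trapping region} $U_0$, and $\Lambda_Y(U_0)$ is transitive for all $Y\in\U$.

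Next I would invoke the singular star flow theory of Shi--Gan--Wen \cite{SGW14}: every Lyapunov stable chain-recurrent class of a $C^1$ star vector field admits a partially hyperbolic splitting whose center-unstable bundle is sectionally expanding (i.e.\ is sectional-hyperbolic for the forward flow, once one rules out the ``repelling'' orientation using the attractor property). Since $\Lambda=\Lambda_G(U_0)$ is an attractor it is Lyapunov stable, and transitivity implies it is a single chain-recurrent class; moreover, all equilibria it contains are hyperbolic by the star property. Therefore $\Lambda$ is sectional-hyperbolic, proving the first assertion.

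For the ``in particular'' clause I would not use expansiveness at all: if $G$ is already a star vector field and $\Lambda_G(U_0)$ is a robustly transitive attractor, then all critical elements of nearby $Y\in\X^1(M)$ are automatically hyperbolic, and the argument of the preceding paragraph applies verbatim to $\Lambda_G(U_0)$ as a Lyapunov stable chain-recurrent class of the star flow $G$.

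The main obstacle, and the only delicate point, is to quote \cite{SGW14} in the precise form needed. Their theorem is stated for chain-recurrent classes and gives a partially hyperbolic splitting that is sectional-hyperbolic either for $G$ or for $-G$; I would need to use Lyapunov stability (guaranteed by the attractor hypothesis, so that orbits cannot escape $U_0$ in forward time) to exclude the $-G$ orientation and conclude sectional expansion for the forward flow. Once this is pinned down, the corollary follows without further computation.
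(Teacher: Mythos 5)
Your high-level plan is sound — invoke Theorem~\ref{thm:star} for the star property, then the Shi--Gan--Wen theory, using the attractor property to fix the orientation — and the paper indeed follows this outline. However, there is a concrete gap in the middle step: the theorem from~\cite{SGW14} that you quote (``every Lyapunov stable chain-recurrent class of a $C^1$ star vector field admits a partially hyperbolic splitting with sectionally expanding center-unstable bundle'') does not exist in that generality. As the paper itself observes, the Shi--Gan--Wen classification of Lyapunov stable chain-recurrence classes of star flows is established only $C^1$-generically among star vector fields \emph{on $4$-manifolds}; for $\dim M\ge 5$ there exist transitive star flows with singularities of different indices~\cite{daLuz18}, which is precisely the obstruction to such a clean statement.

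What the paper does instead is extract from~\cite{SGW14} two intermediate results valid in all dimensions --- Theorem~\ref{thm:chainrecurrent1} (singularities in nontrivial chain-recurrence classes of star flows are Lorenz-like, and generically they share the same periodic index) and Theorem~\ref{thm:Loc-Homoge} (generic local homogeneity of periodic indices near a chain-recurrence class) --- and then combines them with the attractor property (which forces $W^u_\sigma\subset\Lambda$ and hence $\sv(\sigma)>0$, ruling out the $-G$ orientation as you anticipated) and with the robustness of the hypotheses to verify conditions (H1) and (H2) of Theorem~\ref{thm:attracting-Lorenz-hyp-sing}. That theorem (essentially Metzger--Morales) is what ultimately delivers sectional-hyperbolicity. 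A further subtlety, which your argument also has to face, is that the SGW homogeneity results hold only for $G$ in a residual subset $\G_1\subset\X^*(M)$, not for all star $G$; the paper bridges this by choosing a nearby $Y\in\G_1\cap\U$ and using robust hyperbolicity of critical elements to transport the index agreement back to $G$ and to the whole $C^1$ neighborhood. Your proposal needs to replace the appeal to the (nonexistent) general classification theorem with this two-step argument via Lorenz-likeness, generic homogeneity, and Theorem~\ref{thm:attracting-Lorenz-hyp-sing}.
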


\begin{remark}\label{rmk:robtransnotenough}
  Although robust transitivity of an attracting set alone
  implies sectional-hyperbolicity for $3$-flows, which is
  the main result of~\cite{MPP04}, this is not enough to
  ensure sectional-hyperbolicity for $m$-dimensional flows
  with $m\ge4$. Indeed, the ``wild strange attractor''
  presented by Shilnikov and Turaev in \cite{ST98} is a
  robustly transitive singular attractor which is not
  sectional hyperbolic; see e.g. \cite[Corollary
  4.3]{MeMor08} and \cite[Lemma 4.2 and Theorem 4]{ST98}.
\end{remark}

\begin{remark}
  \label{rmk:ergodic}
  Sectional-hyperbolicity for attracting sets of vector
  fields of class $C^{1+\epsilon}$, for any given fixed
  $\epsilon>0$, implies some strong ergodic properties:
  existence of physical/SRB measures whose basins cover a
  full Lebesgue measure subset of the trapping region
  \cite{APPV,ArSzTr,LeplYa17,araujo_2021}; and these measures
  are rapid mixing and satisfy the Almost Sure Invariance
  Principle (which implies many other statistical
  properties: Central Limit Theorem, Law of the Iterated
  Logarithm etc) on an open and dense subset of such vector
  fields \cite{ArMel18}. Since $\X^2(M)$ is $C^1$ dense in
  $\X^1(M)$ with the $C^1$ topology, all these results hold
  for a dense subset of the $C^1$ open classes and,
  naturally, also for $C^2$ open classes of such vector
  fields in Corollary~\ref{mcor:starattractsechyp}.
\end{remark}

Coupling Corollary~\ref{mcor:starattractsechyp} with
Theorem~\ref{mthm:principal2} we obtain a partial converse
to this theorem.

\begin{maincorollary}\label{mcor:sechypiffrobexp}
  If $G\in\X^1(M)$ admits a robustly transitive attractor
  $\Lambda=\Lambda_G(U_0)$ with trapping region $U_0$ where $G$ is
  $1$-strongly dissipative, then
  \begin{align*}
    \Lambda \text{  is sectional-hyperbolic }
    \iff
    \Lambda \text{ is robustly expansive.}
  \end{align*}
\end{maincorollary}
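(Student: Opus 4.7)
The plan is to recognize that Corollary~\ref{mcor:sechypiffrobexp} is a direct consequence of pairing the two main results proved earlier in the excerpt, one for each implication.

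For the forward implication ($\Rightarrow$), I would simply invoke Theorem~\ref{mthm:principal2}. The hypothesis provides exactly what that theorem requires: a sectional-hyperbolic attracting set for a $1$-strongly dissipative $C^1$ vector field. The conclusion is $C^1$ robust expansiveness of $\Lambda$. Note that robust transitivity plays no role in this direction, so the assumption is used asymmetrically.

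For the reverse implication ($\Leftarrow$), I would apply Corollary~\ref{mcor:starattractsechyp}. The assumption is that $\Lambda=\Lambda_G(U_0)$ is a robustly transitive attractor which is moreover robustly expansive. Corollary~\ref{mcor:starattractsechyp} then yields sectional-hyperbolicity of $\Lambda$ directly. Here, in contrast, the $1$-strong dissipativity hypothesis is not needed.

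Thus the entire argument reduces to citing the two previously established results in sequence, essentially a two-line proof. The genuine content of the corollary lies upstream: Theorem~\ref{mthm:principal2} (the Poincar\'e return map construction extended to $d_{cu}>2$ under strong dissipativity, giving positive expansiveness and then robust expansiveness via stable manifolds) and Corollary~\ref{mcor:starattractsechyp} (which combines Theorem~\ref{thm:star}, placing $G$ in $\X^*(M)$, with the Shi--Gan--Wen classification \cite{SGW14} of singular star attractors as sectional-hyperbolic). There is no additional obstacle at this stage; the only care needed is to track which hypothesis feeds which implication, since each direction uses only a proper subset of the standing assumptions.
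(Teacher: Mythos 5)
Your proof matches the paper's own argument exactly: the paper introduces the corollary with the single sentence ``Coupling Corollary~\ref{mcor:starattractsechyp} with Theorem~\ref{mthm:principal2} we obtain a partial converse to this theorem,'' which is precisely your two-step citation. Your additional bookkeeping of which hypothesis ($1$-strong dissipativity versus robust transitivity) feeds which implication is correct and a fair observation about the asymmetry in the standing assumptions.
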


In~\cite{SGW14} the authors show that $C^1$ generically
among star vector fields $G$ on $4$-manifolds every Lyapunov
stable chain recurrence class is sectional-hyperbolic,
either for $G$ or for $-G$.

If $\dim M\ge5$ there are transitive star flows with singularities of
different indices \cite{daLuz18}.  For higher dimensional results we
may additionally assume homogeneity; see the next subsection.

For conservative flows it is known that $C^1$ stably expansive
conservative flows are Anosov, that is, globally hyperbolic; see
e.g. \cite{BesLeeWen}.

\subsubsection{Robust chaoticity and sectional-hyperbolicity}
\label{sec:robust-expans-robust}

Recall that an invariant attracting subset $\Lambda$ is
\emph{sensitive to initial conditions} if, for every small enough
$r>0$ and $x\in\Lambda$, and for any neighborhood $U$ of $x$, there
exists $y\in U$ and $t\neq0$ such that $\phi_ty$ and $\phi_tx$ are
$r$-apart from each other: $d\big(\phi_ty,\phi_tx\big)\ge r$.

We say that an invariant subset $\Lambda$ for a flow
$\phi_t$ is \emph{future chaotic with constant $r>0$} if,
for every $x\in\Lambda$ and each neighborhood $U$ of $x$ in
the ambient manifold, there exists $y\in U$ and $t>0$ such
that $d\big(\phi_ty,\phi_tx\big)\ge r$.  Analogously, we say
that $\Lambda$ is \emph{past chaotic with constant $r$} if
$\Lambda$ is future chaotic with constant $r$ for the
reverse flow $\phi_{-t}$ (i.e., generated by $-G$).

If we have such \emph{sensitive dependence both for the past
  and for the future}, we say that $\Lambda$ is
\emph{chaotic}. Note that sensitive dependence on initial
conditions is weaker than chaotic, future chaotic or past
chaotic conditions.

Clearly, expansiveness implies sensitive dependence on
initial conditions. An argument with the same flavor as the
proof of expansiveness provides the following (see also
\cite{AMS2010} for a different approach to sensitiveness).

\begin{maincorollary}\label{mcor:sing-hyp-chaotic}
  A sectional-hyperbolic attracting set $\Lambda=\Lambda_G(U)$ is
  \emph{robustly chaotic}, i.e. there exists a neighborhood $\cU$ of
  $G$ in $\X^1(M)$ and a constant $r_0>0$ such that
  $\Lambda_Y(U)=\cap_{t>0}\close{\psi_t(U)}$ is chaotic with constant
  $r_0$ for each $Y\in\cU$, where $U$ is a trapping region for
  $\Lambda$ and $\psi_t$ is the flow generated by $Y$.
\end{maincorollary}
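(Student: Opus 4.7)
The plan is to decouple chaoticity into a past part and a future part, using directly the partially hyperbolic splitting of $\Lambda$ rather than quoting robust expansiveness as a black box. Sectional-hyperbolicity of an attracting set together with the dimensions of its splitting is $C^1$-open with continuous dependence of the invariant splitting and of the basin, so my first step is to fix a $C^1$ neighborhood $\cU$ of $G$ on which the relevant constants---the hyperbolicity data $C,\lambda,K,\theta$, a uniform lower bound $\rho>0$ for the radii of local strong-stable manifolds $\cW^s_z$ through points $z$ in a neighborhood of $\Lambda_Y(U)$, a uniform local unstable size at each hyperbolic equilibrium of $\Lambda_Y(U)$, and a uniform flow-box radius and Poincar\'e return domain at regular points---are all independent of $Y\in\cU$. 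The constant $r_0$ is then chosen as a small fraction of these uniform sizes.

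For past chaoticity I would fix $Y\in\cU$, $x\in\Lambda_Y(U)$ and a neighborhood $V$ of $x$, and pick any $y\in(\cW^s_x\cap V)\setminus\{x\}$, which is possible because $d_s\ge 1$. Invariance of the strong-stable foliation under the flow gives $\psi_{-t}y\in\cW^s_{\psi_{-t}x}$ as long as $d(\psi_{-t}y,\psi_{-t}x)<\rho$, and applying the forward contraction $d(\psi_s w,\psi_s z)\le C\lambda^s d(w,z)$ at the base point $z=\psi_{-t}x$ with $w=\psi_{-t}y$ and $s=t$ yields
\[
d(\psi_{-t}y,\psi_{-t}x)\;\ge\;C^{-1}\lambda^{-t}\,d(y,x),
\]
which grows monotonically in $t$ and reaches $r_0$ in finite time.

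Future chaoticity I would split into two cases. If $x=\sigma$ is an equilibrium in $\Lambda_Y(U)$, hyperbolicity of $\sigma$ and the impossibility of sinks in sectional-hyperbolic attracting sets provide a local unstable manifold of $\sigma$ of dimension at least one; any $y\ne\sigma$ picked in that manifold close to $\sigma$ satisfies $d(\psi_t y,\sigma)\ge r_0$ after finite time by exponential expansion away from $\sigma$. If $x$ is a regular point, I would build a small cross-section $\Sigma\ni x$ transverse to $Y$ carrying a dominated splitting $F^s\oplus F^{cu}$ with $\dim F^{cu}=d_{cu}-1\ge 1$, on which the Poincar\'e return map uniformly expands $F^{cu}$-disks---this is exactly the cross-section geometry built for Theorems~\ref{mthm:principal1} and~\ref{mthm:principal2}. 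Taking $y$ obtained from $x$ by a small displacement in a $F^{cu}$-direction on $\Sigma$, iterated returns expand the $\Sigma$-distance until a flow-box argument translates the separation into $d(\psi_t y,\psi_t x)\ge r_0$ at some finite $t>0$.

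The main obstacle is the future chaoticity at regular points: sectional expansion controls only areas of $2$-planes inside $E^{cu}$ and not individual $F^{cu}$-vectors, and the Poincar\'e return map is only piecewise defined because of the singular stable foliation of equilibria, so making the separation argument uniform in $Y\in\cU$ requires the cross-section constructions of Theorems~\ref{mthm:principal1}--\ref{mthm:principal2} together with careful bookkeeping of return times for orbits that pass near equilibria.
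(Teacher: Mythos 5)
Your treatment of past chaoticity and of the case where $x$ is an equilibrium mirrors the paper exactly and is fine. The gap is in the future-chaoticity argument at regular points. You propose to build a small cross-section $\Sigma\ni x$, displace $x$ to a nearby $y$ in an $F^{cu}$-direction, and use iterated returns to expand. But a sectional-hyperbolic attracting set contains regular points $x$ lying on the stable manifold $\cW^s_\sigma$ of a singularity $\sigma$, whose forward orbit converges to $\sigma$ and never returns to any cross-section; for such $x$ the whole ``iterated returns expand the $\Sigma$-distance'' mechanism is vacuous, and a one-shot application of sectional expansion near $x$ does not by itself produce a uniform separation constant. You name this difficulty in your final paragraph but offer no fix beyond ``careful bookkeeping,'' which is precisely where a real argument is needed.

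The paper's proof avoids this by running the whole thing by contradiction and by not trying to pick $y$ constructively in a $cu$-direction. Assuming $\Lambda$ is not future chaotic, it fixes a ``bad'' $x$ and neighborhood $V$, rules out that $x$ is an equilibrium (via $\cW^u_x$), then rules out $x\in\cW^s_\sigma$ using the Inclination Lemma (a transversal disk $D\subset V$ through $x$ gets pushed $C^1$-close to $\cW^u_\sigma$, forcing a $\delta$-separation). Only after these reductions does $\omega(x)$ contain a regular point, so the orbit of $x$ returns to the global family $\Xi$ infinitely often; one then chooses $y\in V$ \emph{off} the immersed codimension-$\ge1$ submanifold $\bigcup_{t}\cW^s_{\phi_t x}$, and feeds the pair into Theorem~\ref{thm:expansivepoincare} with $h=\mathrm{id}$ to get the contradiction. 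This black-box use of positive expansiveness is what actually delivers the uniform expansion along returns, including the higher-codimensional case $d_{cu}>2$ where a single $cu$-vector need not be expanded and the $1$-strong-dissipativity / $C^1$ stable foliation is what makes the quotient-map argument work; your sketch of expanding ``$F^{cu}$-disks'' on a local $\Sigma$ glosses over this. In short: you correctly identify the right ingredients, but you are missing the contradiction structure, the Inclination Lemma step that disposes of $x\in\cW^s_\sigma$, and the reduction to Theorem~\ref{thm:expansivepoincare} rather than a hand-built $cu$-expansion.
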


For a partially hyperbolic attracting set of codimension two we obtain
a converse to Theorem~\ref{mthm:principal1}.

\begin{maincorollary}\label{mcor:dcu2robchaotic}
  Let $\Lambda$ be a partially hyperbolic attracting set for
  $G\in\X^1(M)$ with $d_{cu}=2$. Then $\Lambda$ is
  sectional-hyperbolic if, and only if, $\Lambda$ is robustly chaotic.
\end{maincorollary}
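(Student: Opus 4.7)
The plan is to prove the equivalence by handling the two implications separately, with the forward direction being immediate and the reverse direction requiring a short perturbative argument.

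For the forward direction, since $d_{cu}=2$ forces sectional-hyperbolicity to coincide with singular-hyperbolicity, Corollary~\ref{mcor:sing-hyp-chaotic} applied to $\Lambda$ directly yields robust chaoticity.

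For the reverse direction, assume $\Lambda$ is partially hyperbolic with $d_{cu}=2$ and robustly chaotic, and let $\cU$ be the $C^1$ neighborhood of $G$ and $r_0>0$ the constant provided by the definition of robust chaoticity. First I would observe that partial hyperbolicity of an attracting set with a fixed trapping region $U$ is a $C^1$-open property (standard cone-field argument), so after shrinking $\cU$ we may assume $\Lambda_Y(U)$ is partially hyperbolic with $(d_s,2)$-splitting for every $Y\in\cU$. Second, I would claim that no $Y\in\cU$ can have a sink inside $\Lambda_Y(U)$: a hyperbolic periodic sink or sink equilibrium would possess a tubular neighborhood on which orbits asymptotically synchronize, violating the $r_0$-future chaoticity at points of the sink.

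With these two observations in hand, I would argue by contradiction: suppose $\Lambda$ is not sectional-hyperbolic. Then either $E^{cu}$ fails to be uniformly area-expanding on $\Lambda$, or some equilibrium $\sigma\in\Lambda$ is non-hyperbolic. In the first case, sequences $x_n\in\Lambda$ and $t_n\to\infty$ with $|\det D\phi_{t_n}\mid E^{cu}_{x_n}|\le 1$ would exist, and by Pliss' Lemma combined with Ma\~n\'e's ergodic closing lemma adapted to flows, yield some $Y\in\cU$ and a periodic orbit $\cO_Y(p)\subset\Lambda_Y(U)$ along which area is not expanded. Since $Y(p)$ is an eigenvector of $D\psi_T\mid E^{cu}_p$ with eigenvalue $1$, the transverse eigenvalue $\mu$ satisfies $|\mu|\le 1$. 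A Franks'-type perturbation along $\cO_Y(p)$ would then produce $\tilde Y\in\cU$ for which the continuation of this orbit is a periodic sink inside $\Lambda_{\tilde Y}(U)$, contradicting the no-sinks conclusion. In the second case, a generic $C^1$ perturbation of $G$ supported near $\sigma$ would either render $\sigma$ a sink equilibrium for a nearby $Y\in\cU$ or bifurcate off a small sink periodic orbit inside $\Lambda_Y(U)$, again contradicting no sinks. Hence $E^{cu}$ is area-expanding and all equilibria in $\Lambda$ are hyperbolic, so $\Lambda$ is sectional-hyperbolic.

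The main obstacle is the perturbative step: one needs a Franks'-type lemma for flows, applicable to periodic orbits lying inside an attracting set, which converts a non-area-expanding periodic orbit into a periodic sink in a nearby vector field while keeping the resulting sink inside the continuation attracting set $\Lambda_{\tilde Y}(U)$ and the ambient vector field inside $\cU$. The analogous tool for non-hyperbolic equilibria must likewise respect the attracting-set structure and the partially hyperbolic splitting. These flow-perturbation techniques are available in the literature on star flows and singular-hyperbolicity, and together they allow the contradiction to be closed rigorously.
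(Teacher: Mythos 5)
Your forward direction is correct and matches the paper: with $d_{cu}=2$, sectional and singular hyperbolicity coincide, and Corollary~\ref{mcor:sing-hyp-chaotic} gives robust chaoticity.

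Your reverse direction takes a genuinely different and substantially harder route than the paper's. The paper never attempts to \emph{directly} verify area expansion on $E^{cu}$. Instead, it observes that, because $\Lambda$ is partially hyperbolic with $d_{cu}=2$, every critical element in the trapping region has index in $\{d_s,d_s+1,d_s+2\}$; robust chaoticity forbids sinks and sources; and Proposition~\ref{pr:simplespec} (a $C^1$-generic unfolding of non-hyperbolic critical elements preserving the attracting set) rules out the non-hyperbolic cases, since any non-hyperbolic equilibrium or periodic orbit can be perturbed within $\cU$ so as to produce either a sink or a pair of nearby orbits that stay $\xi$-close for all time, both contradicting robust chaoticity. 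This pins the indices to $d_s$ (periodic orbits) and $d_s$ or $d_s+1$ (equilibria), i.e.\ exactly hypotheses (H1)--(H2) of Theorem~\ref{thm:attracting-Lorenz-hyp-sing}, which then delivers sectional-hyperbolicity. In other words, the paper proves \emph{robust homogeneity} and then cites the Metzger--Morales/MPP theorem, which is where the area-expansion analysis is actually done.

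You instead try to establish area expansion from scratch via the chain ``Pliss lemma $+$ ergodic closing lemma for flows $+$ Franks lemma for flows,'' and a separate ad hoc perturbation for non-hyperbolic equilibria. This is where the genuine gap lies. First, the ergodic closing lemma for flows (needed to turn failure of uniform area expansion into a non-area-expanding closed orbit of a nearby vector field) is well known to be delicate precisely near singularities, and singularities are unavoidable here; you would need Wen's version with the rescaled linear Poincar\'e flow, and you would have to verify that the resulting periodic orbit lies inside the continuation $\Lambda_{\tilde Y}(U)$ with appropriate control, none of which is spelled out. Second, a Franks-type perturbation for flows (Bonatti--Gourmelon--Vivier) only controls the linear Poincar\'e flow along the orbit away from singularities, and you must check it both preserves the trapping region structure and does not push $\tilde Y$ outside $\cU$. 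Third, your treatment of the non-hyperbolic equilibrium case is a heuristic (``a generic perturbation would either render $\sigma$ a sink or bifurcate a small periodic sink''); this is not automatic in dimension $\ge 4$ and in any case is precisely what the paper isolates as Proposition~\ref{pr:simplespec}. In effect you are re-proving the content of Theorem~\ref{thm:attracting-Lorenz-hyp-sing}; a more economical route, and the one the paper takes, is to observe that robust chaoticity plus partial hyperbolicity with $d_{cu}=2$ forces robust homogeneity of indices, and then quote that theorem.
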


In the three-dimensional case we obtain the converse to
Theorem~\ref{mthm:principal1}.

\begin{maincorollary}
  \label{mcor:robust-chaotic-sing-hyp}
  Let $\Lambda$ be an attracting set for $G\in\X^1(M^3)$. Then
  $\Lambda$ is sectional-hyperbolic if, and only if, $\Lambda$ is
  robustly chaotic.
\end{maincorollary}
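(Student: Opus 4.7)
The implication sectional-hyperbolic $\Rightarrow$ robustly chaotic is already Corollary~\ref{mcor:sing-hyp-chaotic}. For the converse, assume $\Lambda = \Lambda_G(U)$ is a robustly chaotic attracting set of $G \in \X^1(M^3)$ with constant $r_0$ on a $C^1$-neighborhood $\V$ of $G$.

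My first step would be to show that robust chaoticity precludes sinks and sources of every $Y\in\V$ inside $U$, and in fact forces $G$ to satisfy the star condition in $U$ (in the sense of the second part of Theorem~\ref{thm:star}). If some $Y \in \V$ had a periodic sink or a sink equilibrium $p \in \Lambda_Y(U)$, its stable manifold would provide a neighborhood $W \ni p$ contained in the basin of $\cO_Y(p)$ in which every orbit stays within $r_0/2$ of $\cO_Y(p)$ for all $t \ge 0$, contradicting future chaoticity at the point $p$ with constant $r_0$. Past chaoticity symmetrically rules out sources. Any non-hyperbolic critical element of $G$ inside $U$ could then be perturbed inside $\V$, via standard saddle-node or Hopf unfoldings applied either to the flow near an equilibrium or to the Poincar\'e return map of a periodic orbit, so as to produce a sink or source inside $U$; this is excluded by the previous paragraph. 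Hence every $Y\in\V$ has only hyperbolic saddle-type critical elements in $U$.

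My second step would be to invoke the classical theory of three-dimensional star flows restricted to attracting sets. If $\Lambda$ has no equilibria, the star property together with the Ma\~n\'e--Liao--Wen techniques (see~\cite{GW2006} and the discussion following Theorem~\ref{thm:star}) imply that $\Lambda$ is uniformly hyperbolic, hence sectional-hyperbolic. If $\Lambda$ contains an equilibrium, the Morales--Pacifico--Pujals theorem~\cite{MPP04} together with the extensions developed in~\cite{SGW14} specialized to $3$-manifolds yield that the star condition on the trapping region, combined with the presence of a singularity inside an attracting set, upgrades the dominated splitting given by the star property to a partially hyperbolic splitting $E^s \oplus E^{cu}$ with volume-expanding two-dimensional centre-unstable bundle; that is, $\Lambda$ is singular-hyperbolic. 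Because $d_{cu} = 2$ in dimension three, this is precisely sectional-hyperbolicity.

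The main obstacle is the second step in the presence of equilibria, since one does not have an a priori robust-transitivity hypothesis as in Corollary~\ref{mcor:starattractsechyp}; one has to use genuinely three-dimensional ingredients (Liao's sifting lemma for star flows, codimension-one stable foliation, two-dimensional centre bundle) to promote the star condition to sectional-hyperbolicity on the whole maximal invariant set in $U$. The first step is essentially bifurcation-theoretic and standard, but some care is needed to ensure that the destabilizing perturbations producing a sink or a source have support well inside $U$ and remain within the prescribed neighborhood $\V$, so that they genuinely violate robust chaoticity on $\Lambda_G(U)$.
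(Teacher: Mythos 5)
Your Step 1 is essentially the same as the paper's: rule out sinks and sources by (future/past) chaoticity, and promote non-hyperbolic critical elements to sinks or sources (equivalently, to pairs of nearby orbits that remain close for all time) by small perturbations. The paper packages this perturbation into Proposition~\ref{pr:simplespec}, items (3) and (4), rather than invoking generic Hopf/saddle--node unfoldings, but the spirit is the same. Note that in dimension $3$ this step gives you more than the star condition: a hyperbolic saddle periodic orbit has index exactly $1$ (indices $0$ and $2$ are source and sink), and a hyperbolic saddle equilibrium has index $1$ or $2$, which is precisely hypotheses $(H1)$ and $(H2)$ of Theorem~\ref{thm:attracting-Lorenz-hyp-sing} with $i=1$ (i.e.\ robust homogeneity in $U$). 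You do not seem to notice you are already in that situation.

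The gap is in Step~2, and you correctly identify it yourself: in the singular case you want to invoke Morales--Pacifico--Pujals~\cite{MPP04} together with~\cite{SGW14}, but~\cite{MPP04} requires robust transitivity, which you do not have (robust chaoticity does not give a dense orbit in $\Lambda_Y(U)$), and~\cite{SGW14} works with Lyapunov stable chain recurrence classes under generic hypotheses, not arbitrary attracting sets of a fixed vector field. Vaguely appealing to Liao's sifting lemma and codimension-one ingredients does not close this; what is actually needed is Theorem~\ref{thm:robusthomogeneous} (proved as Theorem~\ref{thm:attracting-Lorenz-hyp-sing}), which is exactly the extension of the~\cite{MPP04} machinery to non-transitive attracting sets of a robustly homogeneous vector field. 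Its proof bypasses transitivity by first establishing sectional-hyperbolicity of the non-wandering part $\Omega_{\Lambda}$ (using the Closing Lemma and the uniform estimates of Theorems~\ref{dp}--\ref{thm:p1} on periodic orbits), and then spreading it to all of $\Lambda$ via Arbieto's total-probability criterion (Proposition~\ref{pr:nonwander}). Once you have your Step~1 plus the dimension-three count of indices, the paper's Corollary is one application of Theorem~\ref{thm:robusthomogeneous} with $i=1$; without that theorem your Step~2 does not go through.
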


Hence, robustly chaotic singular-attracting sets are necessarily
Lorenz-like. Thus, \emph{if we can show that arbitrarily close orbits,
  in a small neighborhood of an attracting set, are driven apart, for
  the future as well as for the past, by the evolution of the system,
  and this behavior persists for all $C^1$ nearby three-dimensional
  vector fields, then the attracting set is sectional-hyperbolic}.

We can extend this conclusion to higher dimensions assuming a stronger
condition.

We say that a singularity $\sigma$ is \emph{generalized Lorenz-like}
if $DG(\sigma)|E^{cu}_\sigma$ has a real eigenvalue $\lambda^s$ and
$\lambda^u=\inf\{\Re(\lambda):\lambda\in\spec(DG(\sigma)),
\Re(\lambda)\ge0\}$ satisfies $-\lambda^u<\lambda^s<0<\lambda^u$ (so
the index of $\sigma$ is $\dim E^s_\sigma=d_s+1$).  We say that $G$ is
a \emph{robustly homogeneous} vector field on the trapping region $U$
if, for some integer $1\le i+1<m$ and for each vector field $Y$ in a
$C^1$ neighborhood $\U$ of $G$:
\begin{itemize}
\item the singularities in $U$ are generalized Lorenz-like with index
  $i$ or $i+1$; and 
\item periodic orbits in $U$ are hyperbolic of saddle-type with the
  same index $i$.
\end{itemize}
Note that homogeneity is stronger than the star condition since the
latter admits the coexistence of critical elements with arbitrary
indices.

The following was already essentially obtained by Metzger and Morales
in~\cite{MeMor08}; see Section~\ref{sec:singhypattracting} for a
proof.

\begin{theorem}\label{thm:robusthomogeneous}
  Let $\Lambda=\Lambda_G(U)$ be an attracting set for
  $G\in\X^1(M)$. If $G$ is robustly homogeneous in $U$, then $\Lambda$
  is sectional-hyperbolic.
\end{theorem}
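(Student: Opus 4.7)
The plan is to follow the strategy of Metzger--Morales~\cite{MeMor06} combined with star flow theory in trapping regions. First observe that robust homogeneity in $U$ implies that $G$ is a star vector field in $U$ in the sense of Theorem~\ref{thm:star}, since by hypothesis every critical element of every $C^1$ nearby $Y$ inside $U$ is hyperbolic. Invoking the results for star flows restricted to a trapping region, one obtains a continuous $D\phi_t$-invariant dominated splitting $T_\Lambda M = E \oplus F$ over $\Lambda$ with $\dim E = i$, extending the hyperbolic splittings of the (common index $i$) periodic orbits in $U$.

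Next I would show that $E$ is uniformly contracting. On the closure of the index-$i$ hyperbolic periodic orbits in $\Lambda$, which is dense in the non-singular part of the chain recurrent set of $\Lambda$ by Pugh's general density theorem adapted to star flows in $U$, the subbundle $E$ coincides with the stable bundle, and the contraction rates are uniform because of the $C^1$-open star condition together with homogeneity. For each equilibrium $\sigma \in \Lambda$, the generalized Lorenz-like hypothesis with index $i$ or $i+1$ forces the $\dim E$ weakest eigenvalues of $DG(\sigma)$ to have strictly negative real parts and to be dominated by the remaining ones, so the fiber $E_\sigma$ lies in the corresponding generalized stable eigenspace and $DG(\sigma)\mid E_\sigma$ is contracting.

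Then I would verify that $F$ is sectional-expanding. Over the dense set of index-$i$ periodic orbits, $F$ is the unstable bundle of dimension $d - i \ge 2$, which is uniformly expanding, so every two-plane $P \subset F_x$ is area-expanded with a uniform rate coming from the $C^1$-open star condition. Continuity of the splitting and of $D\phi_t$ then propagates the sectional-expansion inequality $|\det(D\phi_t\mid P)| \ge K e^{\theta t}$ to all of $\Lambda \setminus \sing(G)$. At a generalized Lorenz-like equilibrium of index $i$ or $i+1$, the eigenvalue inequality built into the definition of ``generalized Lorenz-like'' guarantees that any two-plane in $F_\sigma$ is area-expanded under the linearized flow, and this extends by continuity to a small neighborhood of $\sigma$, where it combines with the previous estimate to give the uniform bound on all of $\Lambda$.

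The principal obstacle is handling the equilibria: one must (a) ensure that the dominated splitting extends continuously across singularities having possibly different indices $i$ and $i+1$, and (b) produce a uniform sectional-expansion rate at such singularities from the Lorenz-like eigenvalue inequalities, matching the rate obtained from the nearby periodic orbits. Both tasks are essentially carried out in~\cite{MeMor06}, and the role of robust homogeneity in the present statement is precisely to guarantee, in a $C^1$-open manner inside $U$, the matching-index hypotheses that their argument requires; the remaining work is to check that their construction is carried out entirely within the trapping region $U$, so that nothing is lost when we restrict attention to $\Lambda = \Lambda_G(U)$.
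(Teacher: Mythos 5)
Your overall strategy — star flows, closing lemma, splitting over periodic orbits, special treatment of equilibria via the Lorenz-like indices — does match the spirit of the paper's proof, which in turn follows Metzger--Morales and \cite[Chapter~5]{AraPac2010}. But there is a genuine gap at the step where you write that ``continuity of the splitting and of $D\phi_t$ then propagates the sectional-expansion inequality to all of $\Lambda\setminus\sing(G)$.''

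The closing lemma (or Pugh's general density theorem) gives you periodic orbits of $C^1$-nearby fields approaching points of the non-wandering set (equivalently, the chain recurrent set) of $G$ inside $U$. It does \emph{not} give density of periodic orbits in the whole attracting set $\Lambda$. Since $\Lambda$ is only an attracting set, not an attractor, $\Lambda\setminus\Omega(G)$ is typically large: it contains wandering points such as generic points on unstable manifolds of periodic orbits and of equilibria inside $U$. On that part of $\Lambda$ you have no periodic orbits to approximate with, so neither the existence of the dominated splitting nor the uniform sectional-expansion rate can be obtained by a continuity argument. The uniformity and domination constants you derive from Theorems~\ref{dp}--\ref{thm:p1} (i.e., the uniform hyperbolicity, angle and domination estimates at periodic points) are therefore only available on $\Omega_\Lambda(G)=\Omega(G)\cap\Lambda$, not on $\Lambda$.

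The paper fills this gap with a separate ingredient, Proposition~\ref{pr:nonwander}, which invokes Arbieto's theorem \cite{arbieto2010}: the non-wandering part $\Omega_\Lambda$ has total probability inside the forward-invariant open set $U$ (by Poincar\'e recurrence), and Arbieto's result upgrades sectional-hyperbolicity on a total-probability subset to sectional-hyperbolicity of the entire maximal invariant set $\Lambda=\bigcap_{t>0}\ov{\phi_t(U)}$. This is not a continuity or density statement but a result about Lyapunov exponents and invariant measures, and it is exactly what lets one pass from $\Omega_\Lambda$ to $\Lambda$. Your last paragraph gestures at ``checking that their construction is carried out entirely within $U$,'' which is a reasonable concern but not the missing piece; the missing piece is this $\Omega_\Lambda\Rightarrow\Lambda$ step. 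A secondary, more minor imprecision: over a periodic orbit the bundle $F$ of dimension $d-i$ contains the flow direction, so it is sectional expanding rather than uniformly expanding; what is uniformly expanding is the transverse $(d-i-1)$-dimensional unstable bundle $E^u$, and sectional expansion of $F=\RR G\oplus E^u$ needs the additional observation that $\|G\|$ is bounded away from zero along periodic orbits.
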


This result is also a tool needed to prove
Corollaries~\ref{mcor:dcu2robchaotic}
and~\ref{mcor:robust-chaotic-sing-hyp}.

\subsection{Organization of the text}
\label{sec:organization-text}

We present preliminary results on sectional-hyperbolic attracting sets
in Section~\ref{sec:preliminary-results} which are needed for the
proofs of Theorems~\ref{mthm:principal1} and~\ref{mthm:principal2}, to
be presented in Section~\ref{sec:proof-main-theorems}.


In Section~\ref{sec:singhypattracting} we present an overview of the
proof of Theorem~\ref{thm:robusthomogeneous} and, using this result as
a tool together with all the previous results, we prove
Corollary~\ref{mcor:starattractsechyp} in
Subsection~\ref{sec:robust-expans-attrac}; and
Corollaries~\ref{mcor:sing-hyp-chaotic}, \ref{mcor:dcu2robchaotic}
and~\ref{mcor:robust-chaotic-sing-hyp} in
Subsection~\ref{sec:robust-chaoticity}.

\subsection*{Acknowledgements}

This is based on the PhD thesis of J. Cerqueira at the Instituto de
Matematica e Estatistica-Universidade Federal da Bahia (UFBA) under a
CAPES/FAPESB scholarship. J. C.  thanks the Mathematics and Statistics
Institute at UFBA for the use of its facilities and the financial
support from CAPES and FAPESB during his M.Sc. and Ph.D. studies.  We
thank A. Castro; V. Pinheiro; L. Salgado and F. Santos for many
comments and suggestions which greatly improved the text.  The authors
also thank the anonymous referees for the careful reading and the many
useful suggestions that greatly helped to improve the quality of the
text.


\section{Preliminary results on sectional-hyperbolic attracting sets}
\label{sec:preliminary-results}

Let $G$ be a $C^1$ vector field admitting a
singular-hyperbolic attracting set $\Lambda$ with isolating
neighborhood $U$.

\subsection{Generalized Lorenz-like singularities}
\label{sec:lorenz-like-singul}

We recall some properties of sectional-hyperbolic
attracting sets extending some results from~\cite{ArMel17,
  ArMel18} which hold for $d_{cu}\ge2$.

\begin{proposition} \label{prop:generaLorenzlike} Let $\Lambda$ be a
  sectional hyperbolic attracting set and let $\sigma\in\Lambda$ be an
  equilibrium.  If there exists $x\in\Lambda\setminus\{\sigma\}$ so
  that $\sigma\in\omega(x)$, then $\sigma$ is generalized Lorenz-like.
\end{proposition}

\begin{remark}
  \label{rmk:notLorenzlike}
  \begin{enumerate}
  \item If $\sigma\in\sing(G)\cap\Lambda$ is generalized Lorenz-like,
    then at $w\in\gamma_\sigma^s\setminus\{\sigma\}$\footnote{An
      embedded disk $\gamma\subset M$ is a (local) {\em stable disk},
      if $d(\phi_t x, \phi_t y)\to0$ exponentially fast as
      $t\to+\infty$, for $x,y\in\gamma$. Here $\gamma^s_\sigma$
      is a local stable disk containing $\sigma$ with maximal
      dimension: the local stable manifold; see e.g. \cite{PM82}.} we
    have $T_w\gamma_\sigma^s= E^{cs}_w=E^s_w\oplus\RR\cdot\{G(w)\}$
    since $T\gamma_\sigma^s$ is $D\phi_t$-invariant and contains
    $G(w)$ (because $\gamma_\sigma^s$ is $\phi_t$-invariant) and the
    dimensions coincide.
  \item If an equilibrium $\sigma\in\sing(G)\cap\Lambda$ is not
    generalized Lorenz-like, then $\sigma$ is not in the positive
    limit set of $\Lambda\setminus\{\sigma\}$, i.e. there is no
    $x\in\Lambda\setminus\{\sigma\}$ so that
    $\sigma\in\omega(x)$. Moreover, we have
    $\dim E^s_\sigma\in\{d_s,d_s+1\}$. An example is provided by the
    pair of equilibria of the Lorenz system of equations away from the
    origin: these are saddles with an expanding complex eigenvalue
    which belong to the attracting set of the trapping ellipsoid
    already known to E. Lorenz; see e.g. \cite[Section
    3.3]{AraPac2010} and references therein.
  \item There are many examples of singular-hyperbolic attracting
    sets, non-transitive and containing non-Lorenz-like (generalized)
    singularities; see Figure~\ref{fig:singhypattracting} for an
    example obtained by conveniently modifying the geometric Lorenz
    construction, and many others in \cite{Morales07} or more recently
    in \cite{BaBoPac21}.
  \item In what follows, \emph{a singular-hyperbolic attracting set
      with no (generalized) Lorenz-like singularities can be treated
      as non-singular attracting set}, since non-Lorenz-like
    singularites do not interfere with the asymptotic dynamics of
    positive trajectories of points in the set.
  \item There are examples of three-dimensional singular
    \emph{attractors} with non-Lorenz-like singularities, but these
    sets are not robustly transitive and cannot be
    sectional-hyperbolic; see e.g.~\cite{MPu98}.
  \item A singular hyperbolic attracting set contains no isolated
    periodic orbits.  For such a periodic orbit would have to be a
    periodic sink, violating volume expansion.
  \end{enumerate}
\end{remark}

\begin{figure}[h]
\centering
\includegraphics[width=9cm]{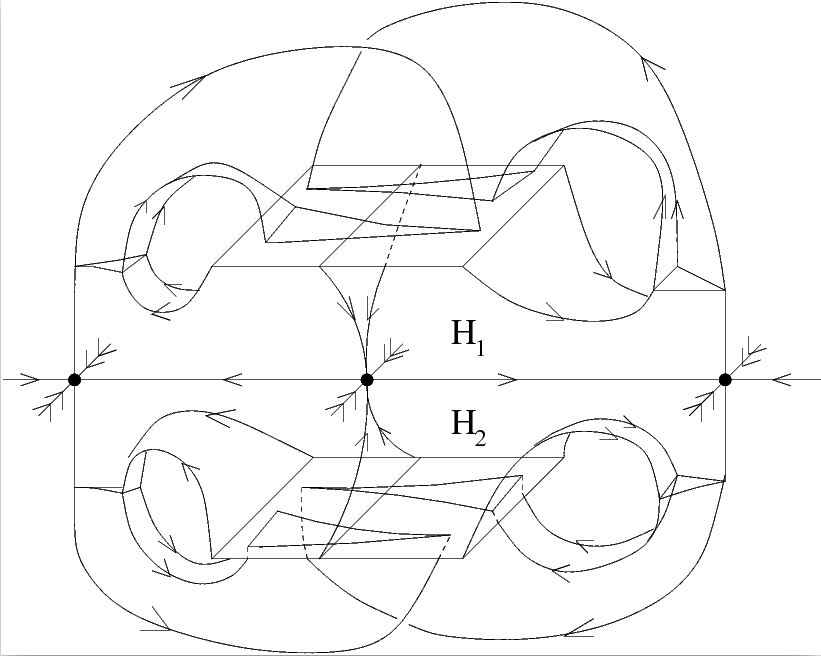}
\caption{\label{fig:singhypattracting}Example of a
  singular-hyperbolic attracting set, non-transitive (in
  fact, it is the union of two transitive sets indicated by
  $H_1,H_2$ above) and containing non-Lorenz like
  singularities.}
\end{figure}

\begin{proof}[Proof of Proposition~\ref{prop:generaLorenzlike}]
  It follows from sectional-hyperbolicity that $\sigma$ is a
  hyperbolic saddle and that at most $d_{cu}$ eigenvalues
  have positive real part.  If there are only $d_{cu}-1$ such
  eigenvalues, then the constraints on $\lambda^s$ and
  $\lambda^u$ follow from sectional expansion.

  Let $\gamma=\gamma_\sigma^s$ be the local stable manifold for
  $\sigma$.  If $\sigma\in\omega(x)$ for some
  $x\in\Lambda\setminus\{\sigma\}$, it remains to rule out the case
  $\dim \gamma=m-d_{cu}=d_s$.  In this case, $T_p\gamma=E^s_p$ for all
  $p\in \gamma\cap\Lambda$ and in particular $G(p)\in E^s_p$.

  On the one hand, $G(p)\in E^{cu}_p$ (see
  e.g.~\cite[Lemma~6.1]{AraPac2010}), so we deduce that $G(p)=0$ for
  all $p\in \gamma\cap\Lambda$ and so $\gamma\cap\Lambda=\{\sigma\}$.

  On the other hand, since $\sigma\in\omega(x)$,
  by the local behavior of orbits near hyperbolic saddles, there
  exists
  $p\in (\gamma\setminus\{\sigma\})\cap\omega(x)\subset
  (\gamma\setminus\{\sigma\})\cap\Lambda$ which, as we have seen, is
  impossible.
\end{proof}

\subsection{Invariant extension of the stable bundle}
\label{sec:invari-extens-stable}

Every partially hyperbolic attracting set admits an invariant
extension of the stable bundle, and also of the stable foliation, to
an open neighborhood, which we may assume without loss of generality
to be a trapping region $U_0$.

Let $\D^k$ denote the $k$-dimensional open unit disk and let
$\mathrm{Emb}^r(\D^k,M)$ denote the set of $C^r$ embeddings
$\psi:\D^k\to M$ endowed with the $C^r$ distance. We say
that \emph{the image of any such embedding is a $C^r$
  $k$-dimensional disk}.

\begin{theorem}
  \label{thm:Ws}
  Let $\Lambda$ be a partially hyperbolic attracting set.
  \begin{enumerate}
  \item The stable bundle $E^s$ over $\Lambda$ extends to a
    continuous uniformly contracting $D\phi_t$-invariant
    bundle $E^s$ on an open positively invariant
    neighborhood $U_0$ of $\Lambda$.
  \item There exists a constant $\lambda\in(0,1)$, such that
    \begin{enumerate}
    \item for every point $x \in U_0$ there is a $C^1$
      embedded $d_s$-dimensional disk $W^s_x\subset M$, with
      $x\in W^s_x$, such that $T_xW^s_x=E^s_x$;
      $\phi_t(W^s_x)\subset W^s_{\phi_tx}$ and
      $d(\phi_tx,\phi_ty)\le \lambda^t d(x,y)$ for all
      $y\in W^s_x$, $t\ge0$ and $n\ge1$.

    \item the disks $W^s_x$ depend continuously on $x$ in
      the $C^0$ topology: there is a continuous map\footnote{More
        precisely, $x\mapsto\gamma(x)(u)$ and $x\mapsto D\gamma(x)_u$
        are continuous maps, for each fixed $u\in\D^{d_s}$.}
      $\gamma:U_0\to {\rm Emb}^1(\D^{d_s},M)$ such that
      $\gamma(x)(0)=x$ and $\gamma(x)(\D^{d_s})=W^s_x$.
      Moreover, there exists $L>0$ such that
      $\lip\gamma(x)\le L$ for all $x\in U_0$.

    \item the family of disks $\{W^s_x:x\in U_0\}$ defines a
      topological foliation $\cW^s$ of $U_0$.
    \end{enumerate}
  \end{enumerate}
\end{theorem}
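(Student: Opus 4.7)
The plan is to extend the partially hyperbolic splitting from $\Lambda$ to the open trapping neighborhood $U_0$ by means of invariant cone fields, and then apply the classical Hadamard--Perron graph-transform theorem at each point of $U_0$ to produce the stable disks $W^s_x$. First I would extend $E^s$ continuously (though not yet $D\phi_t$-invariantly) to an open neighborhood $V\supset\Lambda$ and fix a narrow cone field $C^s_\alpha$ of width $\alpha>0$ around this extension. By uniform contraction of $E^s$ and the domination estimate on $\Lambda$, for some $T>0$ large and $\alpha>0$ small the backward time map pulls $C^s_\alpha$ strictly into a narrower cone with exponential rate, i.e. $D\phi_{-T}\bigl(C^s_\alpha(\phi_Tx)\bigr)\subsetneq C^s_{\alpha\lambda^T}(x)$ whenever $x,\phi_Tx\in V$. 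Since $\Lambda$ is an attracting set I may shrink $V$ to a trapping region $U_0\subset V$ with $\overline{\phi_t(U_0)}\subset U_0$ for all $t>0$, so that every forward orbit stays in $U_0$. For each $x\in U_0$ the nested cones
\begin{equation*}
E^s_x:=\bigcap_{n\ge0}D\phi_{-nT}\bigl(C^s_\alpha(\phi_{nT}x)\bigr)
\end{equation*}
collapse onto a single $d_s$-dimensional subspace, which I take as the extended stable bundle. Continuity of $x\mapsto E^s_x$ follows from the uniform exponential narrowing; the bound $\|D\phi_t|E^s_x\|\le C\lambda^t$ and $D\phi_t$-invariance in continuous time are recovered by interpolating across $[0,T]$ using a uniform bound on $\|D\phi_s\|$ for $s\in[0,T]$.

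With a continuous $D\phi_t$-invariant uniformly contracting bundle $E^s$ over $U_0$ in place, I would apply Hadamard--Perron to the time-$T$ map $\phi_T$ at each $x\in U_0$, working in adapted charts along the forward orbit $\{\phi_{kT}x\}_{k\ge0}$ that respect the splitting $E^s\oplus E^{cu}$. The graph transform acting on Lipschitz $E^s$-graphs of uniformly bounded size and slope is a uniform contraction in the $C^0$ norm; its unique fixed point is the $C^1$ embedded $d_s$-disk $W^s_x$ with $T_xW^s_x=E^s_x$, on which $\phi_T$ contracts distances by $\lambda^T$. The inclusion $\phi_t(W^s_x)\subset W^s_{\phi_tx}$ and the estimate $d(\phi_tx,\phi_ty)\le\lambda^t d(x,y)$ in continuous time follow by interpolating between consecutive iterates of $\phi_T$.

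Continuous dependence of $\gamma(x)$ on $x$ in the sense of the footnote, and the uniform Lipschitz bound $\lip\gamma(x)\le L$, come from the fact that the graph transform depends continuously on the basepoint and preserves the a priori size and slope bounds along its iteration. To promote the family $\{W^s_x\}_{x\in U_0}$ to a topological foliation I would establish local disjointness: if $W^s_x\cap W^s_y\ni z$ with $x,y$ close enough, then for each $n\ge0$ the disks $\phi_{nT}(W^s_x)$ and $\phi_{nT}(W^s_y)$ pass through $\phi_{nT}z$ tangent to $E^s_{\phi_{nT}z}$, so by uniqueness of the graph-transform fixed point at $\phi_{nT}z$ they must coincide near $\phi_{nT}z$, which forces $W^s_x=W^s_z=W^s_y$ near $z$. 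Combined with $C^0$-continuity and the fact that the disks cover $U_0$, this yields the topological foliation $\cW^s$.

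The main difficulty I foresee lies at equilibria $\sigma\in\sing(G)\cap\Lambda$, where the flow direction collapses and orbits do not separate uniformly, so both the cone-field estimates and the chart-based graph transform must be adapted. Near each hyperbolic $\sigma$ I would extend $E^s$ locally using the stable subspace of the linearization $DG(\sigma)$ (which agrees with $E^s_\sigma$ by continuity of the splitting) and patch this with the extension along regular orbits inside the common cone field $C^s_\alpha$; and in the Hadamard--Perron step I would use a linearizing chart for orbit segments that spend a long time close to $\sigma$. Once the continuous $D\phi_t$-invariant extension of $E^s$ is in place throughout $U_0$, the remainder of the argument is classical partially hyperbolic stable-manifold theory.
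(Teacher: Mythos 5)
The paper does not give a proof of Theorem~\ref{thm:Ws}; it simply cites Araujo--Melbourne \cite[Proposition~3.2, Theorem~4.2, Lemma~4.8]{ArMel17}. Your outline (cone-field extension of $E^s$ to a trapping neighborhood, then Hadamard--Perron graph transform for the time-$T$ map, then local uniqueness to promote the disks to a topological foliation) is the standard route and is essentially what the cited reference does, so the core of the argument is correct.

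Your final paragraph, however, is off the mark in two ways. First, equilibria pose \emph{no} special difficulty for the stable-manifold construction: the cone-field argument and graph transform are applied to the diffeomorphism $\phi_T$, which is perfectly well-behaved near $\sigma$, and the stable bundle $E^s$ is complementary to $E^{cu}\supset\RR\{G\}$, so nothing ``collapses'' in the $E^s$-direction at a singularity. The place where equilibria genuinely cause trouble in this circle of ideas is the construction of invariant \emph{center-unstable} manifolds and of Poincar\'e return maps (because there the flow direction does degenerate and return times blow up), not the strong stable foliation. Second, the parenthetical claim that the ``stable subspace of the linearization $DG(\sigma)$ \dots agrees with $E^s_\sigma$ by continuity of the splitting'' is false when $\sigma$ is a (generalized) Lorenz-like singularity: there the stable eigenspace of $DG(\sigma)$ has dimension $d_s+1$, while $E^s_\sigma$ has dimension $d_s$ and is the \emph{strong} stable eigenspace corresponding to the $d_s$ most contracted directions. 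Since the strong stable subspace is exactly what the dominated splitting picks out, replacing ``stable subspace'' by ``strong stable subspace'' fixes the statement, but the whole patching argument you describe there is unnecessary: once $E^s$ is extended to $U_0$ by the cone-field argument, the graph transform goes through uniformly over $U_0$ without any special treatment of neighborhoods of equilibria.
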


\begin{proof}
  This can be found in~\cite[Proposition~3.2, Theorem~4.2 and
  Lemma~4.8]{ArMel17}.
\end{proof}

\subsubsection{Smoothness of the stable foliation on a trapping region}
\label{sec:smoothn-topolog-foli}

For a sectional hyperbolic attracting set $\Lambda$, the trapping
region $U_0$ admits a $C^1$ topological foliation $\cW^s$ if we assume
that $\Lambda$ is $1$-strongly dissipative; recall
Definition~\ref{def:qstrongdiss}.

\begin{theorem}\label{thm:qdiss}
  Let $\Lambda$ be a sectional hyperbolic attracting set $\Lambda$ for
  a vector field $G$ of class $C^r$, for some $r\ge1$.  Suppose that
  $\Lambda$ is $q$-strongly dissipative for some $q\in
  [1/d_s,r]$. Then there exists a neighbourhood $U_0$ of $\Lambda$
  such that the stable manifolds $\{W^s_x:x\in U_0\}$ define a $C^q$
  foliation\footnote{Now we have that the map
    $(x,u)\mapsto \gamma(x)(u)$ is $C^q$.} of $U_0$.
\end{theorem}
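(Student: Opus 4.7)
The plan is to follow the strategy developed in \cite{ArMel17}, reducing the problem to the verification of a suitable $q$-bunching inequality and then applying a Hirsch--Pugh--Shub-type regularity theorem for the invariant foliation tangent to $E^s$.

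First, by Theorem~\ref{thm:Ws}, the stable bundle $E^s$ extends to a continuous, uniformly contracting, $D\phi_t$-invariant bundle on a positively invariant trapping region $U_0 \supset \Lambda$, with dominated $D\phi_t$-invariant complement $E^{cu}$. This makes $U_0$ a normally hyperbolic lamination (tangent to $E^s$) in the sense of Hirsch--Pugh--Shub, and the classical $C^q$-section theorem guarantees that the laminae are of class $C^q$ (with $C^q$ transverse dependence, hence a $C^q$ foliation) as soon as we control a uniform bunching estimate of the form
\begin{equation*}
\|D\phi_t|E^s_x\| \cdot \bigl\|(D\phi_t|E^{cu}_x)^{-1}\bigr\|^{q} \le C\,\mu^{t},
\qquad x\in U_0,\ t\ge 0,
\end{equation*}
for some $C>0$ and $\mu\in(0,1)$. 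So the core of the proof is to translate the $q$-strong dissipativity hypothesis of Definition~\ref{def:qstrongdiss} into this bunching inequality.

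Second, I would split $U_0$ into a neighborhood of $\sing(G)\cap\Lambda$ and its complement. Away from the singularities, condition (b) controls the bunching globally: using the identity $\log|\det D\phi_t(x)| = \int_0^t \diver G(\phi_s x)\,ds$, the splitting $\det D\phi_t = \det(D\phi_t|E^s)\cdot\det(D\phi_t|E^{cu})$, and the standard bound $\frac{d}{ds}\log\|D\phi_s|F\|\le \|DG(\phi_s x)\|_2$ valid on any invariant subbundle $F$, one estimates
\begin{equation*}
\log \|D\phi_t|E^s_x\|
+ q\,\log\bigl\|(D\phi_t|E^{cu}_x)^{-1}\bigr\|
\le \int_0^t \bigl(\diver G + (d_s q-1)\|DG\|_2\bigr)(\phi_s x)\,ds + O(1),
\end{equation*}
where the $O(1)$ error absorbs bounded terms depending on the angles between $E^s$ and $E^{cu}$ along the orbit. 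The strict negativity in condition (b) then yields exponential decay in $t$.

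Third, near each equilibrium $\sigma\in\Lambda$ the flow slows down, so the integrated bound above is not by itself sufficient; here condition (a) takes over, via the explicit description $D\phi_t(\sigma)=e^{t\,DG(\sigma)}$. Ordering the eigenvalues of $DG(\sigma)$ as $\Re\lambda_1 \le \cdots \le \Re\lambda_d$, one checks that $\log\|D\phi_t|E^s_\sigma\|$ is controlled above by $t\,\Re\lambda_1 + o(t)$ after a suitable change of norm, while $\log\|(D\phi_t|E^{cu}_\sigma)^{-1}\|\le -t\,\Re\lambda_{d_s+1}+o(t)$ and $q\,\Re\lambda_d$ provides the margin needed to cover the worst direction of expansion in $E^{cu}$; the inequality $\Re(\lambda_1 - \lambda_{d_s+1} + q\lambda_d)<0$ then gives the bunching at $\sigma$, and a local linearization (Hartman--Grobman/Sternberg) together with a continuity/cone-field argument transfers this to a full neighborhood of $\sigma$ in $U_0$. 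Matching the two estimates on the overlap produces the uniform bunching on all of $U_0$, and the $C^q$-section theorem concludes. The main obstacle is precisely this matching at the singularities: the flow is arbitrarily slow there, so the global integrated estimate from (b) must be strict and uniform, and the equilibrium condition (a) must be sharp enough to dominate not only the spectral gap across the splitting but also the largest expansion rate $\Re\lambda_d$ raised to the power $q$, which is exactly why the margin $q\lambda_d$ appears in Definition~\ref{def:qstrongdiss}(a).
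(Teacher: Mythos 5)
The paper does not actually prove this statement: its ``proof'' consists of the single citation to \cite[Theorem~4.2]{ArMel17}, so there is no in-paper argument to compare against. Your high-level plan (reduce to a uniform bunching inequality and invoke a Hirsch--Pugh--Shub--type $C^q$-section theorem, handling the neighbourhood of the singularities separately) is the right spirit and is indeed what the cited work does, so the strategy is sound.

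The concrete bunching inequality you wrote down, however, is not the right one, and this propagates into two errors. First, $\|D\phi_t|E^s_x\|\cdot\|(D\phi_t|E^{cu}_x)^{-1}\|^q$ evaluated at an equilibrium $\sigma$ (after an adapted change of norm) has exponential rate $\Re\lambda_{d_s}-q\,\Re\lambda_{d_s+1}$, whereas hypothesis~(a) controls $\Re\lambda_1-\Re\lambda_{d_s+1}+q\,\Re\lambda_d$. Your claim that $\log\|D\phi_t|E^s_\sigma\|\le t\,\Re\lambda_1+o(t)$ is an index error: $\Re\lambda_1$ is the \emph{most} contracting exponent, so it controls $m(D\phi_t|E^s_\sigma)$, not the norm; the norm grows like $e^{t\Re\lambda_{d_s}}$. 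Moreover $\lambda_d$ simply does not appear in your expression, so your ``margin $q\Re\lambda_d$'' has no source. The bunching that matches Definition~\ref{def:qstrongdiss}(a) must involve $m(D\phi_t|E^s_x)$, $\|(D\phi_t|E^{cu}_x)^{-1}\|$, and $\|D\phi_t(x)\|^q$ (the \emph{full} tangent map, not just $D\phi_t|E^{cu}$); that is precisely why the top eigenvalue $\lambda_d$ and the full Frobenius norm $\|DG\|_2$ enter the definition. Second, and relatedly, the displayed estimate with the coefficient $(d_sq-1)\|DG\|_2$ is asserted but does not follow from the tools you list: using the determinant identity, the splitting of determinants, and $\frac{d}{ds}\log\|D\phi_s|F\|\le\|DG\|_2$, the natural bound on $\log\|D\phi_t|E^s_x\|+q\log\|(D\phi_t|E^{cu}_x)^{-1}\|$ carries the coefficient $d_s-1+q$, not $d_sq-1$; these disagree already for $d_s=1$. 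So the derivation of the bunching from condition~(b), and the matching with condition~(a) near equilibria, both need to be redone with the correct bunching expression before the $C^q$-section theorem can be invoked.
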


\begin{proof}
  This is proved in \cite[Theorem 4.2]{ArMel17}.
\end{proof}

Theorem~\ref{thm:qdiss} with $q\ge1$ is crucial to have a good
geometrical estimate of distances between stable leaves close to the
attracting set in a higher codimension setting $d_{cu}>2$, as
explained in Subsection~\ref{sec:distance-between-sta} and used in
Section~\ref{sec:proof-main-theorems}.


\subsection{Extension of the center-unstable cone field}
\label{sec:extens-center-unstab}

The splitting $T_\Lambda M=E^s\oplus E^{cu}$ extends
continuously to a splitting $T_{U_0} M=E^s\oplus E^{cu}$
where $E^s$ is the invariant uniformly contracting bundle in
Theorem~\ref{thm:Ws} (however $E^{cu}$ is not invariant
in general).  Given $a>0$ and $x\in U_0$, we define the {\em
  center-unstable cone field} as
$
\cC^{cu}_x(a)=\{v= v^s+v^{cu}\in E^s_x\oplus E^{cu}_x:\|v^s\|\le a\|v^{cu}\|\}$.

\begin{proposition}
  \label{prop:Ccu}
  Let $\Lambda$ be a partially hyperbolic attracting set.
  \begin{enumerate}
  \item There exists $T_0>0$ such that for any $a>0$, after
    possibly shrinking $U_0$,
    $ D\phi_t\cdot \cC^{cu}_x(a)\subset
    \cC^{cu}_{\phi_tx}(a)$ for all $t\ge T_0$, $x\in U_0$.
  \item Let $\lambda_1\in(0,1)$ be given.  After possibly
    increasing $T_0$ and shrinking $U_0$, there exist
    constants $K,\theta>0$ such that
    $|\det(D\phi_t| P_x)|\geq K \, e^{\theta t}$ for each
    $2$-dimensional subspace $P_x\subset E^{cu}_x$ and all
    $x\in U_0$, $t\geq 0$.
  \end{enumerate}

\end{proposition}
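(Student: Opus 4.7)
The plan is to transfer both properties from the invariant set $\Lambda$, where they follow from the domination inequality and the sectional-expansion hypothesis, to the trapping region $U_0$ by continuity, shrinking $U_0$ inside successively smaller trapping regions as needed --- this is permitted because every neighborhood of an attracting set contains a smaller trapping region.

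For item (1), on $\Lambda$ the splitting is $D\phi_t$-invariant, so for $v=v^s+v^{cu}\in E^s_x\oplus E^{cu}_x$ with $\|v^s\|\le a\|v^{cu}\|$ one has $D\phi_t v^s\in E^s_{\phi_t x}$, $D\phi_t v^{cu}\in E^{cu}_{\phi_t x}$, and
\begin{equation*}
  \frac{\|D\phi_t v^s\|}{\|D\phi_t v^{cu}\|}\le \|D\phi_t|E^s_x\|\cdot\|D\phi_{-t}|E^{cu}_{\phi_t x}\|\cdot\frac{\|v^s\|}{\|v^{cu}\|}\le aC\lambda^t
\end{equation*}
by the domination inequality. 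Choose $T_0$ with $C\lambda^{T_0}<1/2$, which yields the strict cone invariance $D\phi_{T_0}\cC^{cu}_x(a)\subset\cC^{cu}_{\phi_{T_0}x}(a/2)$ on $\Lambda$. Since $E^s$ extends continuously to $U_0$ by Theorem~\ref{thm:Ws}, $E^{cu}$ is defined continuously on $U_0$, and $D\phi_{T_0}$ is continuous on $M$, this strict inclusion persists on an open neighborhood $V\supset\Lambda$. Replace $U_0$ by a trapping region inside $V$, iterate at multiples of $T_0$ using positive invariance, and cover intermediate times $t\in[T_0,2T_0]$ by uniform continuity of $(x,t)\mapsto D\phi_t$ (possibly shrinking $U_0$ once more). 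This gives cone invariance for every $t\ge T_0$.

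For item (2), first pass to an adapted inner product under which $|\det(D\phi_t|P_x)|\ge e^{\theta_0 t}$ for all $x\in\Lambda$, $t\ge 0$, and every $2$-plane $P_x\subset E^{cu}_x$; this is the standard averaging construction. Using joint continuity of $(x,P)\mapsto |\det(D\phi_T|P)|$ in $x\in U_0$ and in $P$ on the Grassmannian of $2$-planes, together with the compactness of $\Lambda$, fix a single time $T\ge T_0$ so that
\begin{equation*}
  |\det(D\phi_T|P_x)|\ge e^{\theta_0 T/2}
\end{equation*}
whenever $P_x\subset\cC^{cu}_x(a)$ and $x$ is in an open neighborhood $V'\supset\Lambda$; shrink $U_0$ to a trapping region inside $V\cap V'$. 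For any $2$-plane $P_x\subset E^{cu}_x$ and $n\ge 1$, item (1) ensures $D\phi_{jT}P_x\subset\cC^{cu}_{\phi_{jT}x}(a)$ for each $j\ge 0$, hence
\begin{equation*}
  |\det(D\phi_{nT}|P_x)|=\prod_{j=0}^{n-1}|\det(D\phi_T|D\phi_{jT}P_x)|\ge e^{n\theta_0 T/2}.
\end{equation*}
Interpolating over $t\in[nT,(n+1)T]$ with a uniform lower bound on $|\det(D\phi_s|Q)|$ for $s\in[0,T]$ and $Q\subset\cC^{cu}$ yields $|\det(D\phi_t|P_x)|\ge Ke^{\theta t}$ with $\theta=\theta_0/2$ and some $K>0$.

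The main obstacle I expect is that $E^{cu}$ is \emph{not} $D\phi_t$-invariant on $U_0\setminus\Lambda$, so the iterated images $D\phi_{jT}P_x$ immediately leave the sub-bundle $E^{cu}$; this is precisely why item (1) is proved first and used in item (2), supplying the $D\phi_t$-invariant cone $\cC^{cu}$ that contains $E^{cu}$ and inside which the sectional-expansion estimate --- extended from $\Lambda$ by continuity --- remains valid at every step of the iteration. A secondary technicality is shrinking $U_0$ to remain a trapping region at each stage, handled by the definition of attracting set, which guarantees arbitrarily small trapping neighborhoods.
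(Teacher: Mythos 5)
Your proof is correct and fills in, in detail, exactly the standard argument that the paper delegates to citations (\cite[Proposition~3.1]{ArMel17} for item (1) and \cite[Proposition~2.10]{ArMel18} for item (2)): establish cone invariance and uniform sectional expansion on the compact set $\Lambda$ from the domination and sectional-expansion hypotheses, push both to a nearby trapping neighborhood by continuity, and iterate over a positively invariant region, with the invariant cone from item (1) compensating for the non-invariance of the extended $E^{cu}$ off $\Lambda$. You also correctly identify the central obstacle (non-invariance of $E^{cu}$ on $U_0\setminus\Lambda$) and why the ordering of the two items matters, so nothing further is needed.
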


\begin{proof} For item (1) see~\cite[Proposition~3.1]{ArMel17}.  Item
  (2) follows from the robustness of sectional expansion; see
  \cite[Proposition 2.10]{ArMel18} with straightforward adaptation to
  area expansion along any two-dimensional subspace of $E^{cu}_x$.
\end{proof}

\subsection{Global Poincar\'e map on adapted cross-sections}
\label{sec:global-poincare-map}

We assume that $\Lambda$ is a partially hyperbolic
attracting set and recall how to construct a piecewise
smooth Poincar\'e map $f:\Xi\to \Xi$ preserving a
contracting stable foliation $\cW^s(\Xi)$.  This largely
follows~\cite{APPV} (see also~\cite[Chapter~6]{AraPac2010})
and \cite[Section 3]{ArMel18} with slight modifications to
account for the higher dimensional set up.

We write $\rho_0>0$ for the injectivity radius of the
exponential map $\exp_z:T_zM\to M$ for all $z\in U$, so that
$\exp_z\mid B_z(0,\rho_0): B_z(0,\rho_0)\to M,
v\mapsto\exp_zv$ is a diffeomorphism with
$B_z(0,\rho_0)=\{v\in T_zM: \|v\|\le\rho_0\}$ and
$D\exp_z(0)=Id$ and also $d(z,\exp_z(v))=\|v\|$ for all
$v\in B_z(0,\rho_0)$.

\subsubsection{Construction of a global adapted cross-section}
\label{sec:constr-global-adapte}

Let $y\in\Lambda$ be a regular point ($G(y)\neq\vec0$).
Then there exists an open flow box $V_y\subset U_0$
containing $y$. That is, if we fix $\epsilon_0\in(0,1)$
small, then we can find a diffeomorphism
$\chi:\D^{d-1}\times(-\epsilon_0,\epsilon_0)\to V_y$ with
$\chi(0,0)=y$ such that
$\chi^{-1}\circ \phi_t\circ\chi(z,s)=(z,s+t)$.  Define the
cross-section $\Sigma_y=\chi(\D^{d-1}\times\{0\})$.


For each $x\in\Sigma_y$, let
$W^s_x(\Sigma_y)= \bigcup_{|t|<\epsilon_0}\phi_t(W^s_x)\cap
\Sigma_y$.  This defines a topological foliation
$\cW^s(\Sigma_y)$ of $\Sigma_y$.  We can also assume that
$\Sigma_y$ is diffeomorphic to $\D^{d_{cu}-1}\times\D^{d_s}$
by reducing the size of the $\Sigma_y$ if needed.  The
stable boundary
$\partial^s\Sigma_y\cong \partial\D^{d_{cu}-1}\times
\D^{d_s}\cong \sS^{d_{cu}-2}\times\D^{d_s}$ is a regular topological
manifold homeomorphic to a cylinder of stable leaves, since
$\cW^s$ is a topological foliation; i.e. $\cong$ denotes
only the existence of a homeomorphism and the subspace
topology of $\partial^s\Sigma_y$ induced by $M$ coincides
with the manifold topology.

Let $\D_a^{d_s}$ denote the \emph{closed disk} of radius $a\in(0,1]$
in $\RR^{d_s}$.  Define the {\em sub-cross-section}
$\Sigma_y(a)\cong \inte(\D^{d_{cu}-1}_a\times \D_a^{d_s})$, and the
corresponding sub-flow box
$V_y(a)\cong\inte(\Sigma_y(a))\times(-\epsilon_0,\epsilon_0)$
consisting of trajectories in $V_y$ which pass through
$\inte(\Sigma_y(a))\cong \inte(\D^{d_{cu}-1}_a)\times
\inte(\D_a^{d_s})$. In what follows we fix $a_0=3/4$.

For future reference, we also set
$\wh{\Sigma_y(a)}\cong \D^{d_{cu}-1}_a\times \D^{d_s}$ and the
corresponding sub-flow box
$\wh{V_y(a)}\cong\inte(\wh{\Sigma_y(a)})\times(-\epsilon_0,\epsilon_0)$;
see Figure~\ref{fig:subsection} for a sketch of
$\Sigma_y, \Sigma_y(a)$ and $\wh{\Sigma_y(a)}$.

\begin{figure}[h]
\centering
\includegraphics[width=10cm]{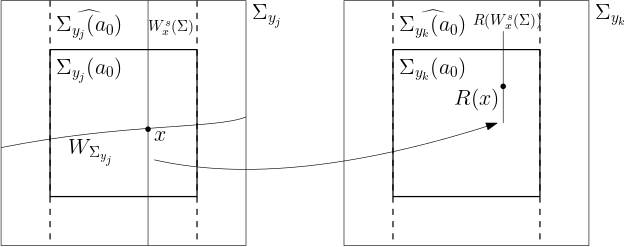}
\caption{\label{fig:subsection} Sketch of cross-sections and
  sub-cross-sections together with the crossing $cu$-disk; and the
  definition of the Poincar\'e time at a given point in the $cu$-disk
  and the corresponding Poincar\'e map on the stable leaf.}
\end{figure}

For each equilibrium $\sigma\in\Lambda$, we let $V_\sigma$
be an open neighborhood of $\sigma$ on which the flow is
linearizable.  Let $\gamma^s_\sigma$ and $\gamma^u_\sigma$
denote the local stable and unstable manifolds of $\sigma$
within $V_\sigma$; trajectories starting in $V_\sigma$
remain in $V_\sigma$ for all future time if and only if they
lie in $\gamma^s_\sigma$.


Define $V_0=\bigcup_{\sigma\in\sing(G)\cap U} V_\sigma$.  We shrink
the neighborhoods $V_\sigma$ so that they are disjoint;
$\Lambda\not\subset V_0$; and
$\gamma^u_\sigma\cap\partial V_\sigma\subset
\cup_{i=1}^{\ell_\sigma}V_{y_i}(a_0)$ for some regular points
$y_i=y_i(\sigma), i=1,\dots,\ell_\sigma$.

By compactness of $\Lambda$, there exists $\ell\in\ZZ^+$ and
regular points $y_1,\dots,y_\ell\in\Lambda$ such that
$\Lambda\setminus V_0 \subset \bigcup_{j=1}^\ell V_{y_j}(a_0)$.
We enlarge the set $\{y_j\}$ to include the points
$y_i(\sigma)$ mentioned above; adjust the positions
of the cross-sections $\Sigma_{y_j}$ if necessary to ensure that
they are disjoint; and define the global cross-section
$\Xi=\bigcup_{j=1}^\ell \Sigma_{y_j}$ and its smaller
version $\Xi(a)=\bigcup_{j=1}^\ell \Sigma_{y_j}(a)$ for each
$a\in(0,1)$. We also set $\wh{\Xi(a)}=\bigcup_{j=1}^\ell
\wh{\Sigma_{y_j}(a)}$ for future reference.

  In what follows we modify the choices of $U_0$ and $T_0$.
  However, $V_{y_j}$, $\Sigma_{y_j}$ and $\Xi$ remain
  unchanged from now on and correspond to our current choice
  of $U_0$ and $T_0$.  All subsequent choices will be
  labeled $U_1\subset U_0$ and $T_1\ge T_0$.  In particular
  $U_1\subset V_0 \cup \bigcup_{j=1}^\ell V_{y_j}(a_0)$.
  We set $\delta_0=d(\partial\Xi,\partial\Xi(a_0))>0$ where
  $\partial\Xi(a)$ is the boundary of the submanifold
  $\Xi(a)$ of $M$, $a\in(0,1)$, and $\Xi=\Xi(1)$.

  For future use, for each $j=1,\dots,l$ we write
  $\Pi_j: \wh{V_{y_j}(a)} \to \wh{\Sigma_{y_j}(a)}$ for the projection
  along flow lines within the sub-flow box $\wh{V_{y_j}(a)}$, that is,
  $x\in\wh{V_{y_j}(a)}$ and $\Pi_jx\in \wh{\Sigma_{y_j}(a)}$ belong to
  the same flow line within $\wh{V_{y_j}(a)}$. We note that, since
  this a finite collection of smooth maps, there exists $L>0$ so that
  $\Pi_j$ is $L$-Lipschitz for all $j=1,\dots, l$.
  
  \subsubsection{The Poincar\'e map}
  \label{sec:poincare-map}

  By Theorem~\ref{thm:Ws}, for any $\delta>0$ we can choose
  $T_1\ge T_0$ such that $\diam \phi_t(W^s_x(\Sigma_{y_j}))<\delta$,
  for all $x\in \Sigma_{y_j}$, $j=1,\dots,\ell$ and $t>T_1$. We fix
  $T_1=T_1(\delta)$ for
  $L\cdot \delta=\delta_0=d(\partial\Xi,\partial\Xi(a_0))$ in what
  follows.

  We define
  $
  \Gamma_0=\{x\in\Xi:\phi_{T_1+1}(x)\in\bigcup_{\sigma\in\sing(G)\cap
    U_0}(\gamma^s_\sigma\setminus\{\sigma\})\}$ and
  $\Xi'=\Xi\setminus\Gamma_0$.  If $x\in\Xi'$, then $\phi_{T_1+1}(x)$
  cannot remain inside $V_0$ so there exists $t>T_1+1$ and
  $j=1,\dots,\ell$ such that $\phi_tx\in V_{y_j}(a_0)$.  Since
  $\epsilon_0<1$, there exists $t>T_1$ such that
  $\phi_tx\in\Sigma_{y_j}(a_0)$.

  For each $\Sigma=\Sigma_{y_j}\in\Xi$, we choose a
  center-unstable disk $W_\Sigma$ which crosses $\Sigma$ and is
  transversal to $\cW^s(\Sigma)$, that is, every stable leaf
  $W^s_x(\Sigma)$ 
  intersects $W_\Sigma$ transversely at only one point, for each
  $x\in\Sigma$.

  For every given fixed $x\in W_\Sigma\cap\Xi'$, we define
\begin{align*}
  R(x)=\phi_{\tau(x)}(x)
  \quad\text{where}\quad
  \tau(x)=\inf\left\{t>T_1:\phi_tx\in\bigcup_{j=1}^\ell
  \Sigma_{y_j}(a_0) \right\}.
\end{align*}
Thus, there exists $k\in\{1,\dots,\ell\}$ so that
\begin{align*}
  \tau_k(x)=\inf\left\{t>T_1:\phi_tx\in \Sigma_{y_k}(a_0) \right\}
\end{align*}
equals $\tau(x)$.  We note that by the choice of $T_1=T_1(\delta)$ we
have $\diam \phi_{\tau(x)}(W^s_x(\Sigma))<\delta=\delta_0/L$ and so
the disk $\phi_{\tau(x)}(W^s_x(\Sigma))$, although not necessarily
contained in any $\wh{\Sigma_{y_j}(a_0)}$, is certainly contained in
$\wh{V_{y_k}(a_0)}$ by construction and so
$\Pi_j\big( \phi_{\tau(x)}(W^s_x(\Sigma)) \big) \subset
\wh{\Sigma_{y_k}(a_0)}$.  Hence, we can define
\emph{for each $y\in\cW^s_x(\Sigma)$}
\begin{align}\label{eq:deftau}
  R(y)=\phi_{\tau(y)}(y)
  \quad\text{where}\quad
  \tau(y)=\inf\left\{t>T_1:\phi_ty\in
  \wh{\Sigma_{y_k}(a_0)}\right\};
\end{align}
see Figure~\ref{fig:subsection} for a sketch of this procedure. Note
that since $\wh{\Sigma_{y_k}(a_0)}\supset\Sigma_{y_k}(a_0)$, the above
definitions of $\tau(x)$ and $\tau(y)$ coincide for $y=x$ and, by the
Tubular Flow Theorem, the definition~\eqref{eq:deftau} provides a
\emph{smooth extension} of the previous definition for
$x\in W_\Sigma\cap\Xi'$ to the whole $W^s_x(\Sigma)$ and also to a
neighborbood of $W^s_x(\Sigma)$ in $\Xi'$.

\begin{remark}
  \label{rmk:crucial}
  Let $C$ be the non-empty connected component of the set
  $\big(R\mid_{W_\Sigma}\big)^{-1}(\Sigma_{y_k}(a_0))$ of $W_\Sigma$
  containing $x$. Then
  $R\mid_{W^s_C(\Sigma)}:W^s_C(\Sigma)\to\Sigma_{y_k}(a_0)$ is smooth
  in the open sub-cross-section
  $W^s_C(\Sigma):=\cup\{W^s_z(\Sigma):z\in C\}\subset \Sigma$.  In
  Figure~\ref{fig:subsection}, we sketch a situation where the
  connected component $C$ is strictly inside $W_\Sigma$.

  Moreover, the union of the connected components of
  $\big(R\mid_{W_\Sigma}\big)^{-1}(\Sigma_{y_k}(a_0)), k=1,\dots,\ell$
  covers $W_\Sigma$ except for the subset of points sent to the
  boundary of $\Xi'$.
\end{remark}

We define the topological foliation
$\cW^s(\wh{\Xi})=\bigcup_{j=1}^\ell \cW^s(\wh{\Sigma_{y_j}(a_0)})$ of
$\wh{\Xi(a_0)}$ with leaves $W^s_x(\wh{\Xi})$ passing through each
$x\in\wh{\Xi(a_0)}$.  From the uniform contraction of stable leaves
together with the choice of $T_1$, $\delta$ and $\delta_0$, we deduce
that
\begin{align*}
  \diam\left(\Pi\big(\phi_{\tau(x)}W^s_x(\wh{\Xi})\big)\right)<L\delta=\delta_0
\end{align*}
and then by the flow invariance of $\cW^s$ and the previous definition
of the Poincar\'e map $R$, we conclude that
\begin{align*}
  R(W^s_x(\wh{\Xi}))
  =
  \Pi\big(\phi_{\tau(x)}W^s_x(\wh{\Xi})\big)
  \subset W^s_{Rx}(\wh{\Xi}).
\end{align*}
This proves the following

\begin{proposition}
  \label{prop:invf}
  For big enough $T_1>T_0$,
  $R(W^s_x(\wh{\Xi}))\subset W^s_{Rx}(\wh{\Xi})$ for all
  $x\in\wh{\Xi}'=\wh{\Xi(a_0)}\setminus\Gamma_0$.
\end{proposition}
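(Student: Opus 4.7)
The strategy is to reduce the invariance of the stable foliation under the Poincar\'e map $R$ to two facts already in hand: (i) the flow $\phi_t$ preserves the ambient stable foliation $\cW^s$ on $U_0$ and exponentially contracts its leaves (Theorem~\ref{thm:Ws}), and (ii) the choice of $T_1$ guarantees that $\phi_{\tau(x)}$ sends each stable leaf $W^s_x(\wh{\Xi})$ of the cross-section into a single flow-box, with diameter smaller than the buffer $\delta_0=d(\partial\Xi,\partial\Xi(a_0))$.

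With these in hand, I fix $x\in\wh{\Xi}'$ with $R(x)\in\wh{\Sigma_{y_k}(a_0)}$ and $y\in W^s_x(\wh{\Xi})$. By the flow-saturated definition of $\cW^s(\wh{\Xi})$ I write $y=\phi_\sigma(z)$ with $z\in W^s_x$ and $|\sigma|<\epsilon_0$. Flow-invariance of $\cW^s$ gives $\phi_{\tau(x)}(z)\in W^s_{R(x)}$; by (ii) this image lies inside $\wh{V_{y_k}(a_0)}$, so there is a unique $u\in(-\epsilon_0,\epsilon_0)$ for which $w:=\phi_{\tau(x)+u}(z)\in\wh{\Sigma_{y_k}(a_0)}$. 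Since $w\in W^s_{R(x)}$ is reached from $W^s_{R(x)}$ by a flow time of modulus less than $\epsilon_0$, the flow-saturated definition of $\cW^s(\wh{\Xi})$ places $w\in W^s_{R(x)}(\wh{\Xi})$.

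It then remains to show $\tau(y)=\tau(x)+u-\sigma$, for then $R(y)=\phi_{\tau(y)}(y)=\phi_{\tau(x)+u}(z)=w\in W^s_{R(x)}(\wh{\Xi})$. This is the delicate step and the main obstacle: I must rule out the possibility that $\phi_t(y)$ meets some $\wh{\Sigma_{y_j}(a_0)}$ at an intermediate time $t\in(T_1,\tau(x)+u-\sigma)$. I plan to handle this by enlarging $T_1$ if necessary, using the uniform contraction of $W^s_x$ to force $\phi_t(y)$ to stay in a thin tube around $\phi_t(x)$ on $(T_1,\tau(x))$; because $\phi_t(x)$ avoids $\bigcup_j\wh{\Sigma_{y_j}(a_0)}$ on this interval (by the very definition of $\tau(x)$) and the cross-sections are transverse to $G$ with the buffer of width $\delta_0$ separating $\partial\Xi(a_0)$ from $\partial\Xi$, a compactness/continuity argument forbids spurious earlier returns of $y$. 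Once $\tau(y)$ is identified in this way, the inclusion $R(W^s_x(\wh{\Xi}))\subset W^s_{Rx}(\wh{\Xi})$ follows at once.
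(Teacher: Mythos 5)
Your proposal identifies the right skeleton (flow by $\tau(x)$, use flow-invariance of $\cW^s$ to land in $W^s_{R(x)}$, use the $\diam<\delta_0$ bound to land inside a flow box $\wh{V_{y_k}(a_0)}$, adjust by a small time $u$ to hit $\wh{\Sigma_{y_k}(a_0)}$, then conclude $w\in W^s_{R(x)}(\wh{\Xi})$ from the flow-saturated definition of $\cW^s(\wh{\Xi})$), and you are right that the remaining step is to identify $\tau(y)$ with $\tau(x)+u-\sigma$. The paper itself only asserts the proposition follows from ``the uniform contraction of stable leaves together with the definition of $\cW^s(\wh{\Xi})$; the flow invariance of $\cW^s$ and the previous definition of the Poincar\'e map $R$'' and does not spell this out, so filling in the details is worthwhile.

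However, the argument you sketch for the delicate step contains a genuine error. You write that ``$\phi_t(x)$ avoids $\bigcup_j\wh{\Sigma_{y_j}(a_0)}$ on this interval (by the very definition of $\tau(x)$)''. That is not what the definition gives. In~\eqref{eq:deftau} (and the line preceding it), $\tau(x)$ for $x\in W_\Sigma\cap\Xi'$ is the first time $t>T_1$ with $\phi_t x\in\bigcup_j\close{\Sigma_{y_j}(a_0)}$, while $\tau(y)$ for $y\in W^s_x(\Sigma)$ uses the \emph{different} target $\bigcup_j\wh{\Sigma_{y_j}(a_0)}$. These two sets are not nested: $\close{\Sigma_{y_j}(a_0)}\cong\close{\D^{d_{cu}-1}_{a_0}}\times\close{\D^{d_s}_{a_0}}$ while $\wh{\Sigma_{y_j}(a_0)}\cong\D^{d_{cu}-1}_{a_0}\times\D^{d_s}$, so $\wh{\Sigma_{y_j}(a_0)}$ contains points with $s$-coordinate of modulus $\ge a_0$ that are outside $\close{\Sigma_{y_j}(a_0)}$. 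Hence the definition of $\tau(x)$ gives no a priori control over whether $\phi_t(x)$ enters $\bigcup_j\wh{\Sigma_{y_j}(a_0)}$ on $(T_1,\tau(x))$, and the ``thin tube around $\phi_t(x)$'' argument you invoke does not apply: it would rule out early hits of $\bigcup_j\close{\Sigma_{y_j}(a_0)}$ by the orbit of $y$, not early hits of the larger-in-$s$ set $\bigcup_j\wh{\Sigma_{y_j}(a_0)}$.

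To make the argument work you would instead need to use the covering property built into the construction, namely that after shrinking, $U_1\subset V_0\cup\bigcup_j V_{y_j}(a_0)$ and the cross-sections are disjoint, together with the fact that the orbit of $y$ stays on the ambient stable leaf $W^s_{\phi_t x}$. Inside a flow box $V_{y_k}(a_0)$ the leaves of $\cW^s$ have constant $cu$-coordinate, so a visit of $\phi_t(y)$ to $\wh{\Sigma_{y_k}(a_0)}$ forces the corresponding crossing of the orbit of $x$ through $\Sigma_{y_k}$ to have $cu$-coordinate in $\D^{d_{cu}-1}_{a_0}$ as well; one must then argue that such a crossing already lies in $\close{\Sigma_{y_k}(a_0)}$, which requires tracking the $s$-coordinate of the crossing of $x$ and is not automatic from what you have stated. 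In short: the geometric reason an early return cannot occur is more subtle than ``$\phi_t(x)$ avoids the target and $\phi_t(y)$ is $\delta_0$-close'', and that sentence in your proposal is where the proof breaks down.
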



In this way we obtain a piecewise $C^1$ global Poincar\'e map
$R:\wh{\Xi}'\to\wh{\Xi(a_0)}$ with piecewise
$C^1$ roof function $\tau:\wh{\Xi}'\to[T_1,\infty)$, and deduce the
following standard result.

\begin{lemma}{\cite[Lemma 3.2]{ArMel18}} \label{lem:log} If
  the section $\wh{\Sigma_y(a_0)}$ contains no equilibria
  (i.e. $\Gamma_0\cap\wh{\Sigma_y(a_0)}=\emptyset$), then
  $\tau\mid_{\wh{\Sigma_y(a_0)}}\le T_1+2$. In general, there is $C>0$
  so that $ \tau(x)\le -C\log\dist(x,\Gamma_0)$ for all
  $x\in\wh{\Xi}'$; moreover, $\tau(x)\nearrow\infty$ as
  $\dist(x,\Gamma_0)\searrow0$.
\end{lemma}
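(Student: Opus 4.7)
The plan is to split the argument into the two assertions of the lemma, each relying on a different structural feature of the construction.

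For the uniform bound in the case $\Gamma_0\cap\wh{\Sigma_y(a_0)}=\emptyset$, I would start by noting that $\phi_{T_1+1}$ is continuous and each local stable manifold $\gamma^s_\sigma$ is closed, so $\Gamma_0$ is relatively closed in $\Xi$. Compactness of the closure of $\wh{\Sigma_y(a_0)}$ together with the disjointness hypothesis then yields a uniform $\eta>0$ with $\dist(\phi_{T_1+1}(x),\bigcup_\sigma\gamma^s_\sigma)\ge\eta$ for every $x\in\wh{\Sigma_y(a_0)}$. At time $T_1+1$ the orbit lies in the trapping region $U_1\subset V_0\cup\bigcup_j V_{y_j}(a_0)$: either it is already in some flow-box $V_{y_j}(a_0)$, in which case $\tau(x)\le T_1+1+\epsilon_0$ by the flow-box parametrization, or it lies in some $V_\sigma$, where linearizing coordinates translate the separation $\eta$ into a uniform lower bound on the unstable component of $\phi_{T_1+1}(x)$. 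A standard escape-time estimate then gives a uniform upper bound on the additional time until the orbit exits $V_\sigma$ and meets some $\wh{\Sigma_{y_k}(a_0)}$. Absorbing this bounded additional time into $T_1$ (enlarging $T_1$ slightly if needed) yields $\tau(x)\le T_1+2$ throughout $\wh{\Sigma_y(a_0)}$.

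For the general logarithmic estimate, the Lipschitz continuity of $\phi_{T_1+1}$ on the compact cross-sections (with uniformly bounded flow velocities) gives a constant $K>0$, independent of $x\in\wh{\Xi}'$ and of $\sigma$, with
\begin{align*}
\dist\bigl(\phi_{T_1+1}(x),\gamma^s_\sigma\bigr)\le K\cdot\dist(x,\Gamma_0).
\end{align*}
In linearizing coordinates on the $V_\sigma$ containing $\phi_{T_1+1}(x)$, the left-hand side is comparable to the norm of the unstable component of $\phi_{T_1+1}(x)$, and the classical escape-time estimate for a hyperbolic saddle with smallest positive real-part eigenvalue $\lambda^u_\sigma$ yields
\begin{align*}
t_{\mathrm{exit}}\le -\frac{1}{\lambda^u_\sigma}\log\dist(x,\Gamma_0)+O(1).
\end{align*}
After exiting $V_\sigma$, the covering $\Lambda\setminus V_0\subset\bigcup_j V_{y_j}(a_0)$ ensures that the orbit reaches some $\wh{\Sigma_{y_j}(a_0)}$ within bounded time. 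Taking $C$ slightly larger than $\bigl(\min_\sigma\lambda^u_\sigma\bigr)^{-1}$ and maximising over the finitely many equilibria gives $\tau(x)\le -C\log\dist(x,\Gamma_0)$, which also forces $\tau(x)\nearrow\infty$ as $\dist(x,\Gamma_0)\searrow 0$.

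The main obstacle is the bookkeeping that justifies a \emph{single} excursion near equilibria per return: if the orbit of $x$ were to enter and leave several $V_\sigma$'s before hitting a cross-section, one would have to sum several logarithmic contributions and ensure they remain under control. This is resolved by the choice of $T_1=T_1(\delta_0)$ made immediately before the lemma, which forces $\diam\phi_t(W^s_x(\Sigma))<\delta_0=\dist(\partial\Xi,\partial\Xi(a_0))$ for $t\ge T_1$: once the orbit first enters the flow-box region $\bigcup_j V_{y_j}(a_0)$ after time $T_1$, its stable leaf is already small enough to be captured by a single $\wh{\Sigma_{y_j}(a_0)}$ via Proposition~\ref{prop:invf}, precluding further visits to the $V_\sigma$. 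Coupled with the pairwise disjointness of the $V_\sigma$, this reduces the analysis to the one-saddle estimate described above.
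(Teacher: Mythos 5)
The core step of your logarithmic estimate has the inequality in the wrong direction, and this cannot be repaired without a genuinely different ingredient. You write that Lipschitz continuity of $\phi_{T_1+1}$ gives
$\dist(\phi_{T_1+1}(x),\gamma^s_\sigma)\le K\cdot\dist(x,\Gamma_0)$, and then feed this into the saddle escape-time estimate to conclude $t_{\mathrm{exit}}\le -\tfrac{1}{\lambda^u_\sigma}\log\dist(x,\Gamma_0)+O(1)$. But the escape time is a \emph{decreasing} function of the initial unstable component: if $u_0\approx\dist(\phi_{T_1+1}(x),\gamma^s_\sigma)$ and the orbit leaves $V_\sigma$ when the unstable coordinate reaches a fixed size $\rho$, then $t_{\mathrm{exit}}\approx \tfrac{1}{\lambda^u_\sigma}\log(\rho/u_0)$, so an \emph{upper} bound on $u_0$ (which is all Lipschitz continuity delivers) only gives a \emph{lower} bound on $t_{\mathrm{exit}}$. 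To conclude $\tau(x)\le -C\log\dist(x,\Gamma_0)$ one needs the reverse comparison $\dist(\phi_{T_1+1}(x),\gamma^s_\sigma)\ge c\cdot\dist(x,\Gamma_0)$, and this does \emph{not} follow from $\phi_{T_1+1}$ being Lipschitz, nor even from $\phi_{-(T_1+1)}$ being Lipschitz, because $\phi_{T_1+1}(\Gamma_0)$ is only a proper subset of $\gamma^s_\sigma$. The missing ingredient is the transversality of the intersection defining $\Gamma_0$: as established in Lemma~\ref{le:Gamma1}(1), $\Gamma_0$ is a transversal intersection of the compact $(d_s+1)$-manifold $\phi_{-(T_1+1)}\big(\bigcup_\sigma\gamma^s_\sigma\big)$ with the $(d-1)$-manifold $\Xi$, and it is this transversality that makes the distance from $\phi_{T_1+1}(x)$ to $\gamma^s_\sigma$ \emph{comparable} (both ways) to $\dist(x,\Gamma_0)$.

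A second, more minor, issue is the first assertion. You invoke compactness of $\close{\wh{\Sigma_y(a_0)}}$ and disjointness of the \emph{open} set $\wh{\Sigma_y(a_0)}$ from the closed set $\Gamma_0$ to produce a uniform $\eta>0$; that does not follow unless the \emph{closure} of $\wh{\Sigma_y(a_0)}$ avoids $\Gamma_0$ (which is what the paper later uses for non-singular strips, via $\Gamma_0\cap\close{S_i}=\emptyset$). More seriously, the final step ``absorbing this bounded additional time into $T_1$'' is circular: $\tau$, $\Gamma_0$, $\Gamma_1$, and hence the set $\wh{\Sigma_y(a_0)}\setminus\Gamma_0$ on which you want the bound all depend on $T_1$, so enlarging $T_1$ changes the objects the lemma is about and does not obviously terminate. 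The concluding paragraph about a single excursion into $V_0$ per return is a correct and relevant observation, but it does not compensate for the direction error, which is where the logarithmic bound actually lives.
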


We define
$\partial^s\wh{\Xi(a_0)}=\bigcup_{j=1}^\ell
\partial^s\wh{\Sigma_{y_j}(a_0)}$ and
$\Gamma_1=\{x\in \wh{\Xi}':R(x)\in\partial^s\wh{\Xi(a_0)}\}$ and then
set $\Gamma=\Gamma_0\cup\Gamma_1$. Clearly
$\Gamma_0\cap\Gamma_1=\emptyset$.

\begin{lemma}
  \label{le:Gamma1}
  \begin{enumerate}
  \item $\Gamma_0$ is a $d_s$-submanifold of $\Xi$ given by a
    finite union of stable leaves
    $W_{s_i}(\Xi), i=1,\dots,k$; and
  \item $\Gamma_1$ is a regular embedded $(d-2)$-topological
    submanifold foliated by stable leaves from $\cW^s(\Xi)$
    with finitely many connected components.
  \end{enumerate}
\end{lemma}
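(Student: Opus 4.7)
The plan is to treat the two items separately, in both cases exploiting the fact that the sets in question are preimages under smooth/continuous maps of well-understood submanifolds, together with the flow-invariance of the stable foliation $\cW^s$ and the fact that $\Xi$ is a finite union of cross-sections transverse to $G$.

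For item (1), I would first invoke Proposition~\ref{prop:generaLorenzlike}: any equilibrium $\sigma\in\Lambda$ that can be approached by orbits in $\Lambda\setminus\{\sigma\}$ is generalized Lorenz-like with $\dim\gamma_\sigma^s=d_s+1$, while equilibria that are not in the $\omega$-limit of $\Lambda\setminus\{\sigma\}$ contribute (after shrinking $T_1$-neighborhoods) no points to $\Gamma_0$ in the first place, so we may restrict to Lorenz-like $\sigma$. For such $\sigma$, the punctured local stable manifold $\gamma_\sigma^s\setminus\{\sigma\}$ is a $(d_s+1)$-dimensional $C^1$ submanifold of $M$ which is flow-invariant. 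Its preimage $\phi_{-(T_1+1)}(\gamma_\sigma^s\setminus\{\sigma\})$ has the same dimension, and since $\Xi$ is a codimension-one smooth submanifold transverse to $G$ while $\gamma_\sigma^s$ tangentially contains $G$ at all its regular points, the intersection with $\Xi$ is transverse and hence a $d_s$-dimensional embedded submanifold. Flow-invariance of $\cW^s$ together with the fact that $W^s_x\subset W^s_{\gamma_\sigma^s}$ whenever $\phi_{T_1+1}(x)\in\gamma_\sigma^s$ shows that this intersection is saturated by leaves of $\cW^s(\Xi)$, and by dimension count each connected component is exactly one such leaf. Finiteness of the number of leaves follows from compactness: $\sing(G)\cap U_0$ is finite, each $\Sigma_{y_j}$ is a relatively compact sub-cross-section, and $\phi_{-(T_1+1)}(\gamma_\sigma^s)$ meets the closure of $\Xi$ in a compact set foliated by the $d_s$-dimensional disks $W^s_x(\Xi)$, so only finitely many disks can fit.

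For item (2), I would use Proposition~\ref{prop:invf}: the Poincar\'e map $R$ sends stable leaves into stable leaves. The stable boundary $\partial^s\wt{\Xi(a_0)}=\bigcup_{j=1}^\ell\partial^s\wt{\Sigma_{y_j}(a_0)}$ is, by construction, a disjoint union of $\ell$ cylinders $\cS^{d_{cu}-2}\times\wt{\DD}^{d_s}$, each a $(d-2)$-dimensional regular topological submanifold saturated by leaves of $\cW^s(\Xi)$. On each connected component of $\wt{\Xi}'=\wt{\Xi(a_0)}\setminus\Gamma_0$ the map $R$ is a piecewise $C^r$ submersion onto its image (it is the composition of the flow with inclusion into the next cross-section), so the preimage $R^{-1}(\partial^s\wt{\Xi(a_0)})$ on each such component is a $(d-2)$-dimensional embedded topological submanifold. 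Saturation of $\Gamma_1$ by stable leaves is immediate from $R(W^s_x(\wt{\Xi}))\subset W^s_{Rx}(\wt{\Xi})$: if $Rx\in\partial^s\wt{\Xi(a_0)}$ and this boundary is a union of stable leaves, then the whole leaf $W^s_x(\wt{\Xi})$ is contained in $\Gamma_1$. Finitely many connected components follows from (1) (there are only finitely many connected components of $\wt{\Xi}'$) combined with the fact that $\partial^s\wt{\Xi(a_0)}$ has only $\ell$ components and $R$ restricted to each component of $\wt{\Xi}'$ maps continuously into a single sub-cross-section.

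The step I expect to be the main obstacle is establishing finiteness of the leaves in $\Gamma_0$ rigorously. A naive transversality argument only gives that the preimage is a submanifold, not that its trace on the bounded sub-cross-sections consists of finitely many components. One must carefully combine (i) the bound $\tau(x)\le -C\log\dist(x,\Gamma_0)$ from Lemma~\ref{lem:log}, which controls how $\Gamma_0$ accumulates inside $\wt{\Xi}'$; (ii) the Stable Manifold Theorem for the hyperbolic saddles $\sigma$ to linearize the flow near $\sigma$ and locally identify $\gamma_\sigma^s$ with a smooth disk of dimension $d_s+1$; and (iii) the fact that the cross-sections $\Sigma_{y_j}$ near $\sigma$ intersect the linearized trajectories of $\gamma_\sigma^s$ in finitely many disks, propagating by the diffeomorphism $\phi_{-(T_1+1)}$ to finitely many disks in $\Xi$.
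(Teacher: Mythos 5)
Your proof of item~(1) is essentially the paper's: restrict to generalized Lorenz-like equilibria (via Proposition~\ref{prop:generaLorenzlike} and Remark~\ref{rmk:notLorenzlike}), observe $T_w\gamma^s_\sigma=E^{cs}_w$, and identify $\Gamma_0$ as a transversal intersection of $\Xi$ with a flowed-back stable manifold. For the finiteness you correctly flag as the delicate point, the paper's route is cleaner than your sketch: it notes $\Gamma_0$ is contained in the transversal intersection of a \emph{compact} $(d_s+1)$-submanifold and a \emph{compact} $(d-1)$-manifold, so $\Gamma_0$ itself is a compact $d_s$-manifold and compactness directly gives finitely many components; this avoids the imprecise ``only finitely many disks can fit'' step.

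For item~(2) there is a genuine gap in your finiteness argument. You claim that finiteness of the connected components of $\Gamma_1$ follows because $\wt\Xi'$ has finitely many components, $\partial^s\wh{\Xi(a_0)}$ has $\ell$ components, and $R$ maps each component of $\wt\Xi'$ continuously into a single sub-cross-section. This fails for two reasons. First, $R$ is not continuous on all of $\wt\Xi'$: it jumps across $\Gamma_1$ (a trajectory that leaves $\wh{\Sigma_j(a_0)}$ through $\partial^s$ continues until it hits the next sub-cross-section), so $R$ is only smooth on the strips $S_i$ and on the local extensions $\wt R_i$. Second, even for a genuinely continuous map, the preimage of a set with finitely many components can have infinitely many components (think $\sin(1/x)^{-1}(0)$). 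The paper needs an explicit contradiction argument: assume $\Gamma_1$ has infinitely many components $\Gamma_1^m$ accumulating on some $\wt\Gamma$; show the Poincar\'e times $\tau(x_m)$ stay uniformly bounded (otherwise the orbits would spend long times near a singularity $\sigma$, and by the placement of cross-sections near $V_\sigma$ the returns would land in the interior of the subsection, not its boundary, so $\wt\Gamma\not\subset\partial^s\wh{\Sigma(a_0)}$); then the $C^1$-limit of trajectories and the local diffeomorphism $R\mid W_x$ near the accumulation point force infinitely many components of $\Gamma_1$ inside a single chart, contradicting the regular-topological-submanifold structure just established. The crucial ingredient your proof misses is precisely this control showing $\Gamma_1$ cannot accumulate on $\Gamma_0$ (where $\tau$ blows up, by Lemma~\ref{lem:log}), which is the only mechanism by which infinitely many components could arise.
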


\begin{remark}
  \label{rmk:topsubmanifold}
  Note that $\Gamma_0$ is a (smooth) submanifold of $\Xi$
  with codimension $d_{cu}-1$, so it separates $\Xi$ only if
  $d_{cu}=2$; while $\Gamma_1$ is a regular topological
  codimension $1$ submanifold of $\Xi$ and so it separates
  $\Xi$.
\end{remark}

\begin{proof}
  It is clear that $W^s_x(\Xi)\subset \Gamma$ for all
  $x\in\Gamma$, so $\Gamma$ is foliated by stable leaves.
  We claim that $\Gamma$ is precisely the set of those
  points of $\Xi$ which are sent to the boundary of $\Xi$ or
  never visit $\Xi$ in the future.

  Indeed, if $x_0\in\wh{\Xi}'\setminus\Gamma_1$, then
  $R(x_0)=\phi_{\tau(x_0)}(x_0)\in \Sigma'$ for some
  $\Sigma'\in\wh{\Xi(a_0)}=\{\wh{\Sigma_{y_j}(a_0)}\}$.  For $x$ close
  to $x_0$, it follows from continuity of the flow that
  $R(x)\in\wt{\Sigma}'$ (with $\tau(x)$ close to $\tau(x_0)$).  Hence
  $x\in\wh{\Xi}'\setminus\Gamma_1$ and since
  $\wh{\Xi}'=\wh{\Xi}\setminus\Gamma_0$, then the claim is proved and,
  moreover, $\Gamma$ is closed.

  For item (1), we note that
  $\Gamma_0\subset\Xi\cap\phi^{-1}_{[0,T_1+1]}\big(\bigcup_\sigma
  \gamma^s_\sigma\big)$ and we may assume without loss of
  generality that the above union comprises only generalized
  Lorenz-like equilibria; cf.
  Remark~\ref{rmk:notLorenzlike}(2). Hence
  $T_w\gamma_\sigma^s=E^{cs}_w$ for
  $w\in\gamma_\sigma^s\setminus\{\sigma\}$; see
  Remark~\ref{rmk:notLorenzlike}(1). Thus $\Gamma_0$ is
  contained in the transversal intersection between a
  compact $(d_s+1)$-submanifold and a compact
  $(m-1)$-manifold, so $\Gamma_0$ is a compact
  differentiable $d_s$-submanifold of $M$ and $\Xi$. In
  addition, since $\Gamma_0$ is foliated by stable leaves
  which are $d_s$-dimensional, then $\Gamma_0$ has only
  finitely many connected components in $\Xi$.

  For item (2), note that for each $x\in\Gamma_1$ we have
  that $R(x)\in\partial\wh{\Sigma_j(a_0)}\subset\Sigma_j$. Thus
  there exists a neighborhood $W_x$ of $x$ in $\Xi$ and
  $V_{Rx}$ of $R(x)$ in $\Sigma_j$ so that
  $R\mid W_x: W_x\to V_{Rx}$ is a diffeomorphism. Hence
  $\Gamma_1\cap W_x=(R\mid W_x)^{-1}(V_x\cap
  \partial^s\wh{\Sigma(a_0)})$ is homeomorphic to a
  $(d_{cu}-2+d_s)$-dimensional disk. Moreover, this shows
  that the topology of $\Gamma_1$ is the same as the
  subspace topology induced by the topology of $\Xi$. We
  conclude that $\Gamma_1$ is a regular topological
  $(m-2)$-dimensional submanifold.
  
  It remains to rule out the possibility of existence of infinitely
  many connected components $\Gamma_1^k, k\in\ZZ^+$ of $\Gamma_1$ in
  $\Xi$. Since $\Xi$ contains finitely many sections only, then there
  exists cross-sections $\Sigma_j,\Sigma_i$ in $\Xi$ and, taking a
  subsequence if necessary, an accumulation set
  $\wt{\Gamma}=\lim_k \Gamma_1^k$ within $\close(\Sigma_j)$ so that
  $R(\Gamma_1^k)\subset\partial^s\wh{\Sigma_i(a_0)}$ for all $k\ge1$.
  By the continuity of the stable foliation, $\wt{\Gamma}$ is an union
  of stable leaves.

  We claim that the Poincar\'e times $\tau(x_k)$ for
  $x_k\in\Gamma_1^k, k\ge1$ are uniformly bounded from
  above. For otherwise the trajectory
  $\phi_{[0,\tau(x_k)]}(x_k)$ intersects $V_\sigma$ for some
  $\sigma\in\sing(G)\cap U$ and accumulates $\sigma$. Hence,
  by the local behavior of trajectories near saddles and the
  choice of the cross-sections near $V_\sigma$, we get that
  $\wt{\Gamma}\subset\wt{\Sigma_i(a_0)}$ is not contained in the
  boundary of the cross-section. This contradiction proves
  the claim. Let $T$ be an upper bound for $\tau(x_k)$.

  Then, for an accumulation point $x\in\wt{\Gamma}$ of $(x_k)_{k\ge1}$
  we have that the trajectories $\phi_{[0,T]}(x_k)$ converge in the
  $C^1$ topology (taking a subsequence if necessary) to a limit curve
  $\phi_{[0,T]}(x)$ and so
  $R(x)=\phi_{\tau(x)}(x)\in\partial^s\wh{\Sigma_i(a_0)}$. Thus we can
  find neighborhoods $W_x$ of $x$ and $V_{Rx}$ of $R(x)$ in $\Xi$ so
  that for arbitrarily large $m$ we have that
  $R\mid W_x: W_x\to V_{Rx}$ is a diffeomorphism and
  $\Gamma_1\cap W_x=(R\mid
  W_x)^{-1}(V_x\cap\partial^s\wh{\Sigma_i(a_0)})$, which contradicts
  the regularity of $\Gamma_1$ as topological submanifold.  This
  concludes the proof of item (2) and the lemma.
\end{proof}

From now on we set $\Xi''=\wh{\Xi(a_0)}\setminus\Gamma$.  Then
$\Xi''=S_1\cup\dots\cup S_k$ for some fixed $k\ge1$, where each $S_i$
is a connected \emph{smooth strip}, homeomorphic to either
\begin{itemize}
\item $\D^{d_{cu}}\times \D^{d_s}$, if
  $\Gamma_0\cap\close{S_i}=\emptyset$; or 
\item $\D^{d_{cu}}\times (\D^{d_s}\setminus\{0\})$, otherwise.
\end{itemize}
The latter are \emph{singular (smooth) strips}.

We note that $R\mid S_i:S_i\to\wh{\Xi(a_0)}$ is a diffeomorphism
onto its image, $\tau\mid S_i:S_i\to[T_1,\infty)$ is smooth
for each $i$, $\tau\mid S_i\le T_1+2$ on non-singular strips
$S_i$ and also on a neighborhood of
$\partial^s(S_i\cup\Gamma_0)$ for singular strips $S_i$.
The foliation $\cW^s(\Xi)$ restricts to a foliation
$\cW^s(S_i)$ on each $S_i$.

\begin{remark} \label{rmk:X} In what follows it may be necessary to
  increase $T_1$ leading to changes to $R$, $\tau$, $\Gamma$ and
  $\{S_i\}$ (and the constant $C$ in Lemma~\ref{lem:log}); see
  Remark~\ref{rmk:obsadj}. However, the global cross-sections
  $\Xi=\bigcup\Sigma_{y_j}$; $\Xi(a_0)$ and
  $\wh{\Xi(a_0)}=\bigcup\wh{\Sigma_{y_j}(a_0)}$ are fixed throughout
  the argument.
\end{remark}

\begin{remark}\label{rmk:existensions}
  Since $R$ sends $\Xi''$ into the subsections $\wh{\Xi(a_0)}$ of
  $\Xi=\Xi(1)$, there are \emph{smooth extensions}
  $\wt{R_i}: \wt{S_i}\to\Xi$ of $R\mid S_i:S_i\to\wh{\Xi(a_0)}$, where
  $\tilde S_i\supset \close(S_i)\setminus\Gamma_0$.
\end{remark}

\subsection{Hyperbolicity of the global Poincar\'e map}
\label{sec:hyperb-global-poinca}

We assume from now on that $\Lambda$ is a sectional hyperbolic
attracting set with $d_{cu}\ge2$ and proceed to show that, for large
enough $T_1>1$, the global Poincar\'e map $R:\Xi''\to\Xi$ is
\emph{piecewise uniformly hyperbolic} (with discontinuities and
singularities).

Let $S\in\{S_i\}$ be one of the smooth strips. Then there
are cross-sections $\Sigma$, $\wt\Sigma\in\Xi$ so that
$S\subset\Sigma$ and $R(\Sigma)\subset\wt\Sigma$.  The
splitting $T_{U}M=E^s\oplus E^{cu}$ induces the continuous
splitting $T\Sigma=E^s(\Sigma)\oplus E^u(\Sigma)$, where
$ E^s_x(\Sigma)=(E^s_x\oplus\RR\{G(x)\})\cap T_x{\Sigma} $
and $ E^u_x(\Sigma)=E^{cu}_x\cap T_x{\Sigma}$ for
$x\in\Sigma$; and analogous definitions apply to
$\wt\Sigma$.


\begin{proposition}\label{prop:secUH}
  The splitting $T\Sigma=E^s(\Sigma)\oplus E^u(\Sigma)$ is
  \begin{description}
  \item[invariant]
    $DR\cdot E^s_x(\Sigma) = E^s_{Rx}(\wt\Sigma)$ for all
    $x\in S$, and
    $DR\cdot E^u_x(\Sigma) = E^u_{Rx}(\wt\Sigma)$ for all
    $x\in\Lambda\cap S$.
  \item[uniformly hyperbolic] for each given
    $\lambda_1\in(0,1)$ there exists $T_1>0$ so that if
    $\inf \tau>T_1$, then
    $\|DR \mid E^s_x(\Sigma)\| \le \lambda_1$ and
    $\|\big(DR \mid E^u_x(\Sigma)\big)^{-1}\| \ge
    \lambda_1^{-1}$ for all $x\in S$ and $S\in\{S_i\}$.
  \end{description}
  Moreover, there exists $0<\wt{\lambda_1}<\lambda_1$ so
  that, for all $x$ on a non-singular strip $S$, or for $x$
  on a neighborhood of $\partial^s(S\cup\Gamma_0)$ of a
  singular strip $S$ we have
  $\wt{\lambda_1} < \|\big(DR \mid
  E^s_x(\Sigma)\big)^{-1}\|$ and
  $\|DR \mid E^u_x(\Sigma)\|<\wt{\lambda_1}^{-1}$.
\end{proposition}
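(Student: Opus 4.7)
The plan is to follow the template for $d_{cu}=2$ developed in \cite[Proposition~3.6]{ArMel18}, adapted to the higher-codimension case where $E^u(\Sigma)$ has dimension $d_{cu}-1\ge 1$. I split the argument into three parts: invariance of the splitting, uniform contraction along $E^s(\Sigma)$, and uniform expansion along $E^u(\Sigma)$. Throughout, I write $DR_x = \pi_{\wt\Sigma}\circ D\phi_{\tau(x)}(x)$, where $\pi_{\wt\Sigma}\colon T_{Rx}M\to T_{Rx}\wt\Sigma$ is the projection along $\RR\{G(Rx)\}$, and exploit that each cross-section $\Sigma_{y_j}$ is built in a flow box around a regular point, so $G$ is uniformly transversal to every $\Sigma\subset\Xi$ while $\|G\|$ is uniformly bounded above and away from zero on $\Xi\cup R(\Xi)$.

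\emph{Invariance.} The bundle $E^s$ is $D\phi_t$-invariant on all of $U_0$ by Theorem~\ref{thm:Ws}. For $v = v_s+\alpha G(x)\in (E^s_x\oplus\RR\{G(x)\})\cap T_x\Sigma$ one has $D\phi_\tau v = D\phi_\tau v_s + \alpha G(Rx)\in E^s_{Rx}\oplus\RR\{G(Rx)\}$, and projecting along $G(Rx)$ keeps us inside this $(d_s+1)$-plane, so $DR_xv\in E^s_{Rx}(\wt\Sigma)$. For $x\in\Lambda\cap S$, the bundle $E^{cu}$ is $D\phi_\tau$-invariant on $\Lambda$ and contains $G$, so $D\phi_\tau v\in E^{cu}_{Rx}$ and $DR_x v = D\phi_\tau v - \beta G(Rx)\in E^{cu}_{Rx}\cap T_{Rx}\wt\Sigma = E^u_{Rx}(\wt\Sigma)$; equality of images with the claimed subspaces follows from the injectivity of $DR_x$ on smooth strips.

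\emph{Contraction.} Any $v\in E^s_x(\Sigma)$ decomposes uniquely as $v=v_s+\alpha G(x)$ with $v_s\in E^s_x$, and uniform transversality of $\RR G(x)$ to $E^s_x$ on $\Xi$ yields $\|v_s\|\le K_0\|v\|$. Theorem~\ref{thm:Ws} then gives $\|D\phi_\tau v_s\|\le C\lambda^\tau\|v_s\|$, and the norm of $\pi_{\wt\Sigma}$ is bounded by some $K_1$ (again by uniform transversality), so $\|DR_xv\|\le K_0K_1C\lambda^\tau\|v\|$. Enlarging $T_1$ so that $K_0K_1C\lambda^{T_1}\le\lambda_1$ yields the claim.

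\emph{Expansion.} For $v\in E^u_x(\Sigma)\subset E^{cu}_x$ the $2$-plane $P=\operatorname{span}\{v,G(x)\}\subset E^{cu}_x$ is nondegenerate since $v\in T_x\Sigma$ while $G(x)\notin T_x\Sigma$. Sectional expansion (Proposition~\ref{prop:Ccu}(2)), together with $D\phi_\tau G(x)=G(Rx)$, gives
\begin{equation*}
\|D\phi_\tau v\wedge G(Rx)\| \;=\; |\det(D\phi_\tau\mid P)|\cdot\|v\wedge G(x)\| \;\ge\; K\, e^{\theta\tau}\,\|v\wedge G(x)\|.
\end{equation*}
Interpreting each wedge as the product of the two norms and the sine of the angle between the factors, and using the uniform lower bounds on $\|G\|$ and on the angles between $T\Sigma$ and $G$, together with the fact that $\|\pi_{\wt\Sigma}(w)\|$ is uniformly comparable to the component of $w$ orthogonal to $G(Rx)$ for $w\in E^{cu}_{Rx}$, we obtain $\|DR_xv\|\ge K' e^{\theta\tau}\|v\|$ with $K'>0$ independent of $x,v,\tau$. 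Choosing $T_1$ so that $K' e^{\theta T_1}\ge\lambda_1^{-1}$ produces the required bound on $\|(DR\mid E^u_x(\Sigma))^{-1}\|$. Finally, on non-singular strips and near $\partial^s(S\cup\Gamma_0)$ on singular strips one has $\tau\le T_1+2$, so $\|D\phi_{\pm\tau}\|\le\exp((T_1+2)\sup_{U_0}\|DG\|)$, which together with the uniform geometric estimates gives the two-sided bounds with some $\wt\lambda_1<\lambda_1$. The main technical obstacle, as in \cite{APPV,ArMel18}, is to derive expansion along \emph{every} direction of $E^u(\Sigma)$ out of the \emph{area} bound provided by sectional hyperbolicity; this succeeds precisely because $G(x)\in E^{cu}_x$ is always an admissible second direction, so every vector in $E^u_x(\Sigma)$ pairs with $G(x)$ into a legitimate sectionally-expanded $2$-plane—this is the step where the argument extends cleanly from $d_{cu}=2$ to arbitrary $d_{cu}\ge 2$.
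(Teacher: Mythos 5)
Your proposal is correct and follows essentially the same approach as the paper, which simply refers to \cite[Proposition 4.1]{ArMel18} and \cite[Lemma 8.25]{AraPac2010} with ``straightforward adaptation to use area expansion along each two-dimensional subspace within $E^u_x(\Sigma)$.'' You have correctly identified the key step that makes the adaptation work for $d_{cu}>2$: pairing any $v\in E^u_x(\Sigma)$ with $G(x)\in E^{cu}_x$ to form a legitimate sectionally-expanded $2$-plane, using $D\phi_\tau G(x)=G(Rx)$ together with uniform transversality of $\Xi$ to the flow to convert the area bound into a one-dimensional expansion bound after projecting along $G(Rx)$; and you have also correctly handled the last statement via the boundedness of $\tau$ from Lemma~\ref{lem:log}.
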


\begin{proof}
  See \cite[Proposition 4.1]{ArMel18} with straightforward
  adaptation to use area expansion along each
  two-dimensional subspaces within $E^u_x(\Sigma)$ in order
  to obtain uniform expansion; cf. \cite[Lemma
  8.25]{AraPac2010}. The last statement follows from the
  boundedness of $\tau$ on the designated domains;
  cf. Lemma~\ref{lem:log}.
\end{proof}

For a given $a>0$, $x\in\Sigma$ and $\Sigma\in\Xi$ we define
the \emph{unstable cone field} at $x$ as
$ \cC^u_x(\Sigma,a)=\{w=w^s+w^u\in E^s_x(\Sigma)\oplus
E^u_x(\Sigma): \|w^s\| \le a \|w^u\| \}.  $


\begin{proposition} \label{prop:sec-cone} For any $a>0$,
  $\lambda_1\in(0,1)$, we can increase $T_1$ and shrink
  $U_1$ such that if $\inf\tau>T_1$ then
  $DR(x)\cdot \cC^u_x(S,a) \subset \cC^u_{Rx}(S',a)$; and
  $\| DR(x)w\| \ge \|\pi^uDR(x)w\|\ge \lambda_1^{-1} \|w\|$
  for all $w\in \cC^u_x(S,a)$ and all $x\in S$ and
  $S,S'\in\{S_i\}$ so that $fx\in S'$. Moreover
  $\| DR(x)w\| \le \wt{\lambda_1}^{-1}\|w\|$ for $x$ in a
  non-singular $S$ or $x$ in a neighborhood of
  $\partial^s(S\cup\Gamma_0)$ for a singular $S$.
\end{proposition}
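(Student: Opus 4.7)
The plan is to derive the cone invariance and the expansion estimate from Proposition~\ref{prop:secUH} together with the sectional expansion of Proposition~\ref{prop:Ccu}(2), using a standard projection argument that relates $DR$ to $D\phi_{\tau}$ modulo the flow direction. First I would write every $w\in\cC^u_x(S,a)$ as $w=w^s+w^u$ with $w^s\in E^s_x(\Sigma)$, $w^u\in E^u_x(\Sigma)$ and $\|w^s\|\le a\|w^u\|$, and decompose $DR(x)w$ at $Rx$ as $DR(x)w = z^s+z^u$ with $z^s\in E^s_{Rx}(\wt\Sigma)$ and $z^u\in E^u_{Rx}(\wt\Sigma)=\pi^u DR(x)w$. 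Since the stable bundle $E^s(\Sigma)$ is strictly invariant (Theorem~\ref{thm:Ws} and Proposition~\ref{prop:secUH}), we have $DR(x)w^s\in E^s_{Rx}(\wt\Sigma)$ with $\|DR(x)w^s\|\le\lambda_1\|w^s\|\le a\lambda_1\|w^u\|$.

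The second step is to estimate $DR(x)w^u$. On $\Lambda$, by Proposition~\ref{prop:secUH} the $E^u$-bundle is invariant and $\|(DR\mid E^u)^{-1}\|\ge\lambda_1^{-1}$, so $z^s=0$ and $\|z^u\|\ge\lambda_1^{-1}\|w^u\|$. Off $\Lambda$ but inside $U_1$, the splitting is only continuous, not invariant, so $DR(x)w^u$ acquires a stable component $\delta(x)$ which by uniform continuity of $E^s(\Sigma),E^u(\Sigma)$ on cross-sections near $\Lambda$ can be made arbitrarily small in norm compared to $\|w^u\|$; this uses that $U_1$ may be shrunk independently of $T_1$ after fixing the cross-sections (see Remark~\ref{rmk:X}). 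Combining, $z^s=DR(x)w^s+\delta(x)$ satisfies
\begin{equation*}
\|z^s\| \le a\lambda_1\|w^u\| + o(1)\|w^u\| \le a\,\|z^u\|,
\end{equation*}
provided $\lambda_1<1$ is fixed and $T_1$ large, $U_1$ small enough. This gives $DR(x)w\in\cC^u_{Rx}(S',a)$ and, moreover, $\|\pi^u DR(x)w\|\ge\lambda_1^{-1}\|w^u\|\ge(1+a)^{-1}\lambda_1^{-1}\|w\|$; after renaming constants (shrinking $\lambda_1$), one obtains the claimed $\|\pi^u DR(x)w\|\ge\lambda_1^{-1}\|w\|$.

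For the expansion estimate on $E^u(\Sigma)$, I would relate $DR$ to the sectional expansion of Proposition~\ref{prop:Ccu}(2): for any $w^u\in E^u_x(\Sigma)\subset E^{cu}_x$ the two-plane $P_x=\mathrm{span}\{G(x),w^u\}\subset E^{cu}_x$ has $|\det(D\phi_{\tau(x)}\mid P_x)|\ge Ke^{\theta\tau(x)}$, while $D\phi_{\tau(x)}$ sends $G(x)$ to $G(Rx)$ (bounded above and below uniformly away from equilibria by choice of the flow boxes $V_{y_j}$). Writing $DR$ on $T\Sigma$ as $D\phi_{\tau(x)}$ composed with the projection along $G(Rx)$ onto $T_{Rx}\wt\Sigma$, the area expansion of $P_x$ translates into the norm expansion $\|DR(x)w^u\|\ge\lambda_1^{-1}\|w^u\|$ once $T_1$ is large enough to absorb the bounded distortion from the $G$-direction and the angle between $P_x$ and $T_{Rx}\wt\Sigma$. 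Together with the cone invariance already established, this yields $\|\pi^u DR(x)w\|\ge\lambda_1^{-1}\|w\|$ for all $w\in\cC^u_x(S,a)$.

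The upper bound $\|DR(x)w\|\le\wt\lambda_1^{-1}\|w\|$ on non-singular strips, and near $\partial^s(S\cup\Gamma_0)$ of singular strips, follows from Lemma~\ref{lem:log}: on these sets $\tau$ is bounded by $T_1+2$, so $D\phi_{\tau(x)}$ is uniformly bounded above by a constant depending only on $T_1$ and $\|DG\|_\infty$, and the same holds for $DR(x)$ because the projection onto $T_{Rx}\wt\Sigma$ along $G(Rx)$ has uniformly bounded norm away from equilibria. The main obstacle is keeping track of the interplay between the non-invariance of $E^u$ on $U_1\setminus\Lambda$ and the largeness of $\tau$ near singular strips; this is resolved by shrinking $U_1$ so that the angular deviation from $\Lambda$-invariance is dominated by the sectional expansion rate, exactly as in \cite[Proposition~4.2]{ArMel18} whose argument adapts verbatim once Proposition~\ref{prop:secUH} is in hand.
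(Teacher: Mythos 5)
Your overall plan --- decompose $w$ into stable and unstable parts, relate $DR$ to $D\phi_\tau$ via projection along the flow, and invoke the sectional expansion of Proposition~\ref{prop:Ccu}(2) for the growth rate --- is the right kind of reconstruction of the argument in \cite[Proposition~4.2]{ArMel18} that the paper points to (the paper's proof is itself a reference plus two remarks: use $\wt\lambda_1$ from Proposition~\ref{prop:secUH} and a uniform bound on $\|\pi^u\|$). The second half of your sketch, where you get norm expansion from area expansion on $P_x=\mathrm{span}\{G(x),w^u\}$ using the uniform bounds on $\|G\|$ at the endpoints and the bounded angle between $G$ and $T\Sigma'$, is essentially the intended argument.

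The gap is in the cone invariance step. You write that off $\Lambda$ the vector $DR(x)w^u$ acquires a stable component $\delta(x)$, and you control $\|\delta(x)\|$ by ``uniform continuity of $E^s(\Sigma),E^u(\Sigma)$ near $\Lambda$'' together with shrinking $U_1$. This is not sufficient: $DR(x)w^u$ is the image under $D\phi_{\tau(x)}$ (composed with a projection), and by Lemma~\ref{lem:log} the Poincar\'e time $\tau(x)$ is unbounded near $\Gamma_0$. Closeness of $x$ to $\Lambda$ at time zero does not control the drift of a vector out of (a continuous extension of) $E^u$ after flowing for a large and variable time $\tau(x)$; that drift can accumulate, and continuity of the splitting alone gives no decay of the stable--to--unstable ratio. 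The correct control is exactly the ambient cone invariance of $\cC^{cu}$ stated in Proposition~\ref{prop:Ccu}(1): this is a consequence of the \emph{dominated} splitting (not mere continuity), it holds for all $t\ge T_0$ on $U_0$, and after intersecting with the cross-sections and using the uniform bound on $\pi^u$ it yields $DR(x)\cdot\cC^u_x(S,a)\subset\cC^u_{Rx}(S',a)$ with no dependence on the size of $\tau(x)$. You cite Proposition~\ref{prop:Ccu}(2) for the expansion but never Proposition~\ref{prop:Ccu}(1); inserting it (and dropping the shrink-$U_1$ substitute) repairs the argument and matches the route the paper intends.
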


\begin{proof}
  See \cite[Proposition 4.2]{ArMel18}, use $\wt{\lambda_1}$ from
  Proposition~\ref{prop:secUH} and an uniform bound for $\|\pi^u\|$
  depending on the angle between stable and center-unstable directions
  and the width of the cone.
\end{proof}

Considering the union of the smooth strips $S$, the previous
results shows that we obtain a global continuous uniformly
hyperbolic splitting $T\Xi''=E^s(\Xi)\oplus E^u(\Xi)$ in the
following sense.

\begin{theorem} \label{thm:global} For given $a>0$ and
  $\lambda_1\in(0,1)$ we obtain a global Poincar\'e map $f$
  so that the stable bundle $E^s(\Xi)$ and the restricted
  splitting
  $T_\Lambda\Xi''=E^s_\Lambda(\Xi)\oplus E^u_\Lambda(\Xi)$
  are $DR$-invariant; and
  $ DR\cdot \cC^u_x(\Xi,a)\subset \cC^u_{fx}(\Xi,a)$ and
  $\|\pi_u Df(x)w\|\ge \lambda_1^{-1}\|w\|$ for all
  $x\in\Xi''$ and $w\in\cC^u_x(\Xi,a)$.
\end{theorem}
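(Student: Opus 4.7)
The plan is to assemble Propositions~\ref{prop:secUH} and~\ref{prop:sec-cone}, which are formulated on a single smooth strip, into the global assertion on $\Xi''$. First I would pin down the global splitting: the extensions $T_{U_0}M = E^s\oplus E^{cu}$ from Theorem~\ref{thm:Ws} and Section~\ref{sec:extens-center-unstab} are continuous on $U_0$, and since every cross-section $\Sigma_{y_j}$ lies in $U_0$ and is transverse to the nowhere-vanishing $G$, the pointwise formulas $E^s_x(\Sigma)=(E^s_x\oplus\RR\{G(x)\})\cap T_x\Sigma$ and $E^u_x(\Sigma)=E^{cu}_x\cap T_x\Sigma$ produce continuous subbundles on each $\Sigma_{y_j}$, hence a continuous splitting $T\Xi=E^s(\Xi)\oplus E^u(\Xi)$ on the finite disjoint union $\Xi=\bigcup_j\Sigma_{y_j}$. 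The cone fields $\cC^u_x(\Xi,a)$ are then defined strip-by-strip exactly as $\cC^u_x(S,a)$ from Proposition~\ref{prop:sec-cone}.

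Next I would uniformize the per-strip parameters. By Lemma~\ref{le:Gamma1}, $\Gamma=\Gamma_0\cup\Gamma_1$ has finitely many components, so $\Xi''=\Xi\setminus\Gamma$ decomposes into finitely many smooth strips $S_1,\dots,S_m$. For each $S_i$, Proposition~\ref{prop:secUH} yields a lower bound $T_1^{(i)}$ on $\tau|_{S_i}$ above which the invariance $DR\cdot E^s_x(\Sigma)=E^s_{Rx}(\wt\Sigma)$ holds on $S_i$ and $DR\cdot E^u_x(\Sigma)=E^u_{Rx}(\wt\Sigma)$ holds on $\Lambda\cap S_i$ with the required hyperbolic bounds; and Proposition~\ref{prop:sec-cone} yields an analogous threshold $T_1^{(i)}$ and a neighborhood $U_1^{(i)}\subset U_0$ giving cone invariance and expansion $\|\pi_u DR(x)w\|\ge\lambda_1^{-1}\|w\|$ on $\cC^u_x(S_i,a)$. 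Setting $T_1:=\max_i T_1^{(i)}$ and $U_1:=\bigcap_i U_1^{(i)}$ and rebuilding $R$ so that $\inf\tau>T_1$ (by the freedom of Remark~\ref{rmk:X}), both propositions apply simultaneously on every strip.

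With this uniform choice the theorem follows by gluing. The $DR$-invariance of $E^s(\Xi)$ on all of $\Xi''$ and of the full splitting on $\Lambda\cap\Xi''$ is a direct consequence of the per-strip invariance statements, while the cone containment $DR\cdot\cC^u_x(\Xi,a)\subset\cC^u_{Rx}(\Xi,a)$ and the expansion $\|\pi_u DR(x)w\|\ge\lambda_1^{-1}\|w\|$ for $w\in\cC^u_x(\Xi,a)$ hold on each $S_i$ and therefore on $\Xi''$, since $\cC^u_x(\Xi,a)=\cC^u_x(S_i,a)$ whenever $x\in S_i$. Continuity of the resulting global splitting is inherited from continuity of the extensions of $E^s$ and $E^{cu}$ on $U_0$.

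The substantive analytical work --- converting sectional expansion along two-dimensional subspaces of $E^{cu}$ into uniform one-dimensional expansion of $E^u(\Sigma)$ after quotienting by the flow direction, and forcing contraction along $E^s(\Sigma)$ by taking long return times --- has already been carried out in Propositions~\ref{prop:secUH} and~\ref{prop:sec-cone}. The only remaining obstacle is bookkeeping: after increasing $T_1$ (which alters $\Gamma$, the $S_i$, and the constant $C$ in Lemma~\ref{lem:log}), the strip decomposition must remain finite and the subbundles must glue continuously across cross-section components. Finiteness is preserved by the compactness/transversality argument used in Lemma~\ref{le:Gamma1}(2), and the cross-sections $\Sigma_{y_j}$ are disjoint, so continuity across them is automatic --- which makes this last step essentially a formality once the per-strip results are in hand.
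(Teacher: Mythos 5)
Your proposal is correct and matches the paper's (implicit) approach: the paper itself introduces Theorem~\ref{thm:global} with the sentence ``Considering the union of the smooth strips $S$, the previous results shows that we obtain a global continuous uniformly hyperbolic splitting...'' and gives no further proof, treating it as an immediate summary of Propositions~\ref{prop:secUH} and~\ref{prop:sec-cone}; you simply spell out the gluing. One small redundancy: those propositions are already stated \emph{uniformly} over all strips $S\in\{S_i\}$ (a single $T_1$ works for every strip), so your step of extracting per-strip thresholds $T_1^{(i)}$ and taking a maximum is unnecessary, though harmless.
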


\begin{remark}\label{rmk:extensions}
  The extensions $\wt{R_i}:\wt{S_i}\to\Xi$ mentioned in
  Remark~\ref{rmk:existensions} are such that on
  $\wt{S_i}\setminus\close S_i$ the map $\wt{R_i}$ behaves as $R$ in
  Propositions~\ref{prop:secUH} and~\ref{prop:sec-cone}. In
  particular,
  $\delta_1=d(S_i,\partial\wt{S_i}) \ge\wt{\lambda_1}\cdot
  d(\Xi(a_0),\Xi) = \wt{\lambda_1}\delta_0$.
\end{remark}

\subsection{Distance between points on distinct stable leaves in
  cross-sections}
\label{sec:distance-between-sta}

Our argument to prove expansiveness hinges on showing that the
distance between points on distinct stable leaves through points on
close by orbits must increase at a definite rate. For that we relate
distance between stable leaves on cross-sections with the distance
between their images on the quotient map.

In the codimension-two case, that is, if $d_{cu}=2$, we use the
one-dimensional central-unstable cone field restricted to the
cross-sections to obtain the following.

\begin{lemma}
\label{le:lengthversusdistance}
Let us assume that $d_{cu}=2$ and that $a>0$ has been fixed,
sufficiently small.  Then there exists a constant $\kappa$ such that,
for any pair of points $x, y \in \Sigma\in\Xi$, and any $cu$-curve
$\gamma:[0,1]\to\Sigma$ joining $x$ to some point of $W^s(y,\Sigma)$,
we have $\ell(\gamma) \le \kappa \cdot d(x,y)$, where
$\ell(\gamma)=\int_0^1\|\dot\gamma\|$ is the length of $\gamma$ in the
induced distance Riemannian distance on $\Sigma$.
\end{lemma}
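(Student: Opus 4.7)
The plan is to exploit that $d_{cu}=2$ makes $E^u(\Sigma)$ one-dimensional on each cross-section, so that $\cC^u_x(\Sigma,a)$ for small $a$ is a narrow wedge around a $1$-dimensional direction uniformly transverse to the codimension-one stable foliation $\cW^s(\Sigma)$. Every $cu$-curve will then be monotone across stable leaves, and the length estimate will follow from controlling a single transverse coordinate.

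First I would establish uniform transversality. By compactness of $\Xi$ and continuity of the extended splitting $T\Sigma = E^s(\Sigma)\oplus E^u(\Sigma)$, there is a uniform lower bound $\theta_0>0$ on the angle between $E^s(\Sigma)$ and $E^u(\Sigma)$; shrinking $a$ if necessary, every vector in $\cC^u(\Sigma,a)$ makes angle at least $\theta_0/2$ with $E^s(\Sigma)$. By Theorem~\ref{thm:Ws}(2b), the stable leaves are uniformly Lipschitz with constant at most some $L>0$.

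Next I would cover $\Xi$ by finitely many small charts $V_1,\dots,V_N$ with coordinates $(s,u)\in\RR^{d_s}\times\RR$ aligned at the center with $E^s(\Sigma)\oplus E^u(\Sigma)$. For small enough chart diameter, continuity of the splitting forces, inside each chart: stable leaves to be graphs $u=\psi_y(s)$ with $\|\psi_y'\|\le\beta$, and $cu$-curves to be graphs $s=\phi(u)$ with $\|\phi'\|\le\alpha$, for $\alpha,\beta$ as small as one needs. In a chart containing $x,y$ and $z=\gamma(1)\in W^s(y,\Sigma)$, combining $|u_z-u_x|\le |u_y-u_x|+\beta|s_z-s_y|$ with $|s_z-s_y|\le\alpha|u_z-u_x|+d(x,y)$ solves to $|u_z-u_x|\le\frac{1+\beta}{1-\alpha\beta}d(x,y)$; since $\dot u$ has constant sign along $\gamma$ by the cone condition, $\ell(\gamma)\le\sqrt{1+\alpha^2}\,|u_z-u_x|$, yielding a local Lipschitz bound $\ell(\gamma)\le\kappa_{\mathrm{loc}}\,d(x,y)$.

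The main obstacle will be globalization. For $x,y$ not covered by a common chart, or for $\gamma$ leaving a chart, I would subdivide $\gamma$ at its intersections with chart boundaries and telescope the local estimates along the monotone progression of a global leaf-space coordinate on $\Sigma$ (which exists because $\cW^s(\Sigma)$ is a topological codimension-one foliation of the topological disk $\Sigma\cong\D^{d_{cu}-1}\times\D^{d_s}$, hence a product foliation up to homeomorphism). Alternatively, once $d(x,y)$ exceeds a Lebesgue number of the cover, $\ell(\gamma)$ is a priori bounded by a constant depending only on $\diam\Sigma$ (since $\gamma$ is monotone on the leaf-space coordinate, hence of bounded total length), and one can enlarge $\kappa$ to absorb this regime.
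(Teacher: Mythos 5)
Your argument is correct in substance, but note that the paper itself does not supply a proof here: it simply cites~\cite[Lemma~6.18]{AraPac2010}, which is the three-dimensional instance ($d_s=1$, $d_{cu}=2$) of this statement. What you have done is reconstruct the content of that reference and observe that nothing in it depends on $d_s=1$, only on the $cu$-cone being a thin cone about a \emph{one-dimensional} core inside a cross-section carrying a codimension-one Lipschitz stable foliation. The chain of ideas — uniform transversality of $E^s(\Sigma)$ and $E^u(\Sigma)$, local graph coordinates $u=\psi_y(s)$ for leaves and $s=\phi(u)$ for $cu$-curves with slopes $\beta,\alpha$ small, the algebra giving $|u_z-u_x|\le\tfrac{1+\beta}{1-\alpha\beta}\,d(x,y)$, and $\ell(\gamma)\le\sqrt{1+\alpha^2}\,|u_z-u_x|$ from constancy of the sign of $\dot u$ — is exactly the right elementary argument, and all of it goes through verbatim for $d_s\ge1$.

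Two small points deserve a cleaner write-up. First, your telescoping variant as stated would give a bound in terms of $\sum d(x_i,y_i)$ rather than $d(x,y)$; the way to make it work is what you gesture at parenthetically: introduce a global leaf-space coordinate $\pi:\Sigma\to W_\Sigma$ (continuous, Lipschitz by the uniform Lipschitz bound $\lip\gamma(x)\le L$ of Theorem~\ref{thm:Ws} plus uniform transversality), note that $\pi\circ\gamma$ is strictly monotone (a $cu$-curve meets each leaf at most once by the $\alpha\beta<1$ fixed-point argument), and bound $\ell(\gamma)$ by a constant multiple of the total variation $|\pi(\gamma(1))-\pi(\gamma(0))|=|\pi(y)-\pi(x)|\le\lip(\pi)\,d(x,y)$. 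This is cleaner than either telescoping chart estimates or splitting by a Lebesgue number. Second, in the Lebesgue-number variant, the case $d(x,y)$ small requires checking that $\gamma$ actually \emph{stays} inside the common chart before concluding via the local estimate; as written there is a mild circularity (the graph representation is used to bound the displacement, but the graph representation only holds while $\gamma$ is in the chart). The bootstrap that resolves it is standard: if $\gamma$ exited the chart at some time $T_0<1$, then $\gamma|_{[0,T_0]}$ would have traversed a $u$-interval of length $\gtrsim\rho_V$ and, by monotonicity and the fixed-point argument, would already have met $W^s(y,\Sigma)$ strictly before $t=1$, contradicting that a $cu$-curve meets a stable leaf at most once and $\gamma(1)\in W^s(y,\Sigma)$. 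With either of these completions the proof is sound.
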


\begin{proof}
  This is basically \cite[Lemma 6.18]{AraPac2010} from the
  singular-hyperbolic setting conveniently restated in the
  codimension-two setting.
\end{proof}

For this result it is crucial that the center-unstable cones
$C^{cu}_x(\Sigma,a)$ on cross-sections $\Sigma\in\Xi$ have
one-dimensional core, that is, they are cones around a certain
one-dimensional subspace of the tangent space $T_x\Sigma$; see the
left hand side of Figure~\ref{fig:1cucone}.

\begin{figure}[htpb]
  \centering
  \includegraphics[width=4cm]{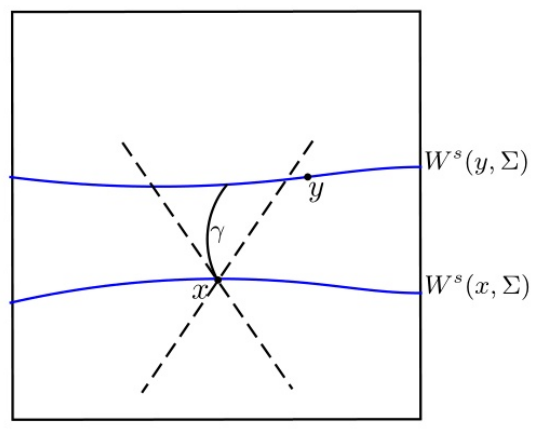}
  \includegraphics[width=4cm]{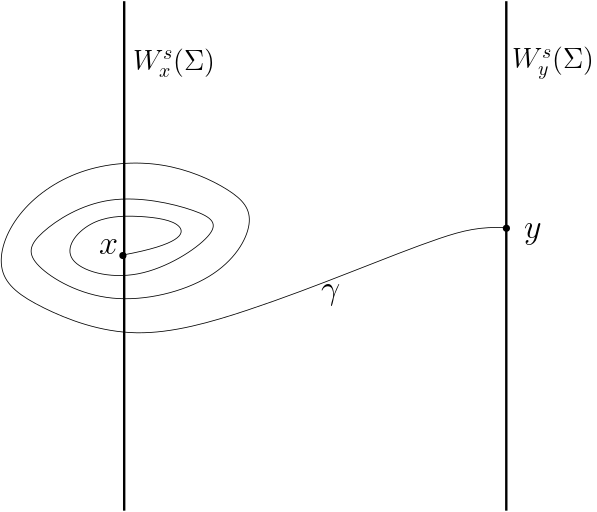}
  \caption{A sketch of a $cu$-curve connecting two stable leaves on a
    cross-section with $d_{cu}=2$, on the left; and a possible
    $cu$-curve connecting two stable leaves on a higher codimensional
    setting, on the right.}
  \label{fig:1cucone}
\end{figure}

\subsubsection{Construction of a $C^1$ local chart }
\label{sec:constr-c1-local}

For higher codimensions $d_{cu}>2$ there exist $cu$-curves
connecting two stable leaves which behave like tightly
curved helixes, loosing any relation between the lenght of a
general $cu$-curve and the distance between the leaves; see
the right hand side of Figure~\ref{fig:1cucone}. That is why
we assume the extra hypotheses of $1$-strong-dissipativeness
in the higher codimensional setting.

We have seen the topological foliation $\EuScript{W}^s$ of $U_0$
induces a topological foliation
$\EuScript{W}^s(\Sigma)=\{W^s_x(\Sigma)\}_{x\in\Sigma}$ on each
$\Sigma$.  For a $1$-strongly dissipative $C^1$ vector field $G$,
Theorem~\ref{thm:qdiss} guarantees that the foliation $\EuScript{W}^s$
is $C^1$, that is, the map
$$\gamma:U_0\to \textrm{Emb}^{1}(\D^{d_s},\Sigma),
\quad\text{such that}\quad \gamma(x)(0)=x \text{ and }
\gamma(x)(\D^{d_s})=W^s_x$$ in the notation of Theorem~\ref{thm:Ws}
satisfies that $(x,y)\in U_0\times\D^{d_s}\mapsto \gamma(x)(y)$
becomes a local $C^1$ diffeomorphism.

Considering the transversal disk $W_\Sigma$ in $\Sigma$, we have that
$W_\Sigma$ is diffeomorphic to $\D^{d_{cu}-1}$ and define the local
chart $\psi:\D^{d_{cu}-1}\times\D^{d_s}\to\Sigma$ by
$\psi(x,y)=\gamma(x)(y)$.  Thus
$\psi(\{x\}\times\D^{d_s})=\gamma_x(\D^{d_s})=W^s(x,\Sigma)$ and
$\psi$ is a $C^1$ chart of $\Sigma$.

\begin{lemma}
\label{le:legthvsdisthighdcu}
There are constants $K,L>0$ such that
$$L\cdot d_e(x,y)\leq d(W^s_x(\Sigma),W^s_y(\Sigma))\leq K\cdot d_e(x,y),$$
where $d$ and $d_e$ denote the Riemannian distance and the Euclidean
distance, respectively.
\end{lemma}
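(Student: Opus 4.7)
The plan is to leverage Theorem~\ref{thm:qdiss} applied with $q=1$, granted by the $1$-strong dissipativeness assumption, which ensures that the stable foliation on the trapping region is $C^1$. This upgrades the chart $\psi:\D^{d_{cu}-1}\times\D^{d_s}\to\Sigma$, $\psi(x,y)=\gamma(x)(y)$, from merely continuous to $C^1$, with everywhere non-singular differential: the transversality of $W_\Sigma$ to $\cW^s(\Sigma)$ ensures the invertibility of $D\psi$ at every point, and the $C^1$ dependence of the leaf $\gamma(x)$ on its base point $x$, together with the $C^1$ parametrization within each leaf, gives the joint $C^1$ smoothness of $\psi$.

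Once $\psi$ is a $C^1$ diffeomorphism (after possibly restricting to slightly smaller closed sub-disks of $\D^{d_{cu}-1}\times\D^{d_s}$, to keep a compact domain), there exist constants $L,K>0$ so that
\begin{align*}
  L\cdot d_e(\xi,\eta) \;\le\; d\bigl(\psi(\xi),\psi(\eta)\bigr) \;\le\; K\cdot d_e(\xi,\eta)
  \qquad\text{for all } \xi,\eta\in\D^{d_{cu}-1}\times\D^{d_s},
\end{align*}
simply by compactness and the uniform bound on $\|D\psi\|$ and $\|D\psi^{-1}\|$. This is the bi-Lipschitz equivalence of the Euclidean chart metric with the Riemannian metric of $\Sigma$ under~$\psi$.

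With this in hand the lemma is elementary. For the upper bound, evaluate both leaves at the common second coordinate $y=0$: since $\psi(x,0)\in W^s_x(\Sigma)$ and $\psi(y,0)\in W^s_y(\Sigma)$,
\begin{align*}
  d\bigl(W^s_x(\Sigma),W^s_y(\Sigma)\bigr) \;\le\; d\bigl(\psi(x,0),\psi(y,0)\bigr) \;\le\; K\cdot d_e(x,y).
\end{align*}
For the lower bound, any $p\in W^s_x(\Sigma)$ and $q\in W^s_y(\Sigma)$ can be written as $p=\psi(x,s_p)$, $q=\psi(y,s_q)$, hence
\begin{align*}
  d(p,q) \;\ge\; L\cdot d_e\bigl((x,s_p),(y,s_q)\bigr)
  \;\ge\; L\cdot d_e(x,y),
\end{align*}
using the trivial Euclidean inequality $\sqrt{|x-y|^2+|s_p-s_q|^2}\ge|x-y|$, and taking the infimum over $p,q$ gives $d(W^s_x(\Sigma),W^s_y(\Sigma))\ge L\cdot d_e(x,y)$.

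The only substantive ingredient is the $C^1$ regularity of the stable foliation, which is precisely what the $1$-strongly dissipative hypothesis supplies via Theorem~\ref{thm:qdiss}; without it, $\psi$ would be merely continuous, no Lipschitz bound on $\psi^{-1}$ would be available, and the estimate could fail because of the tightly curved $cu$-helixes between leaves indicated on the right of Figure~\ref{fig:1cucone}.
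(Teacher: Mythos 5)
Your proposal is correct and follows essentially the same route as the paper: both exploit Theorem~\ref{thm:qdiss} to upgrade $\psi$ to a $C^1$ diffeomorphism and then use its bi-Lipschitz bounds on a compact set (equivalently, uniform bounds on $\|D\psi\|$ and $\|D\psi^{-1}\|$) to compare the chart's Euclidean metric with the induced Riemannian metric on $\Sigma$. The paper phrases the comparison in terms of lengths of curves $\xi$ and their pullbacks $\tilde\xi$, whereas you compare point distances directly and take infima; these are the same argument once one notes that $d(W^s_x(\Sigma),W^s_y(\Sigma))$ is defined via the intrinsic distance on $\Sigma$, so the bi-Lipschitz bounds on $\psi$ apply cleanly.
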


\begin{proof}
  By definition $d(W^s_x(\Sigma),W^s_y(\Sigma))=\inf_{\xi}\ell(\xi)$,
  where $\ell(\xi)$ is the length of the curve $\xi$ connecting some
  point of $W^s_x(\Sigma)$ to another point of $W^s_y(\Sigma)$ inside
  $\Sigma$. Since $\psi$ is a diffeomorphism, $\xi$ is the image of
  some curve $\tilde{\xi}\in \D^{d_{cu}}\times\D^{d_s}$ connecting
  $\{x\}\times\D^{d_s}$ to $\{y\}\times\D^{d_s}$ and
  $\ell(\xi)=\int_a^b\|D\psi(\tilde{\xi})\cdot\tilde{\xi}'(t)\|dt$.
  Then $\ell(\xi)\leq K\cdot\ell(\tilde{\xi})$ where
  $K=\sup_{x\in\Sigma}\|D\psi(x)\|$. Hence
\begin{align*}
  d\big(W^s(x,\Sigma),W^s(y,\Sigma)\big)
  &=
    \inf_{\xi}\ell(\xi)
    \leq K\inf_{\xi}\ell(\tilde{\xi})
  =
    K\cdot d_e(\{x\}\times\D^{d_s},\{y\}\times\D^{d_s})
    =
    K\cdot d_e(x,y).
\end{align*}
Analogously, $\ell(\xi)\geq L\cdot\ell(\tilde{\xi})$ with
$L=\inf_{x\in\Sigma} \|D\psi^{-1}(x)\|^{-1}$ and so 
\begin{align*}
  d\big(W^s(x,\Sigma),W^s(y,\Sigma)\big)
  &=
    \inf_{\xi}\ell(\xi)
    \geq
    L\inf_{\xi}\ell(\tilde{\xi})
    =
    L\cdot d_e( \{x\}\times\D^{d_s}, \{y\}\times\D^{d_s} )
    =
    L\cdot d_e(x,y)
\end{align*}
finishing the proof.
\end{proof}

\subsection{The Poincar\'e quotient map on a cross-section}
\label{sec:poincare-quotient-ma}

Recall the choice of a a center-unstable disk $W_\Sigma$ transversal
to $\cW^s(\Sigma)$ for each $\Sigma\in\wh{\Xi(a_0)}$, and consider the
projection $\pi:\Sigma\to W_\Sigma$ in each $\Sigma\in\wh{\Xi(a_0)}$ which
maps every $x\in\Sigma$ to the point $\pi(x)$ such that
$$W^s_x(\Sigma)\cap W_\Sigma=\{\pi(x)\}.$$

The quotiented cross-section is homeomorphic to $W_\Sigma$.  Since the
foliation $\cW^s(\Sigma)$ is preserved by $R$, that is,
$R(\cW^s_x(\Sigma))\subset \cW^s_{Rx}(\Sigma')$, where
$R(x)\in\Sigma'$, the Poincar\'e quotient map
$f:\cW_\Sigma\to\cW_{\Sigma'}$ is given by
$f\circ\pi(x)=\pi\circ R(x)$.

\subsubsection{The quotient map is expanding on the smooth strip}
\label{sec:quotient-map-expand}

In the $d_{cu}>2$ case, since the vector field is assumed to be
$1$-strongly dissipative, then we can use the $C^1$ local chart
$\psi$.

We have that
$\Sigma\cong\EuScript{D}^{d_{cu}-1}\times\EuScript{D}^{d_s}$,
the stable foliation is given by
$\big\{\{a\}\times\EuScript{D}^{d_s}\big\}$ and
$W_\Sigma\cong\EuScript{D}^{d_{cu}-1}$. Then, in these
coordinates, $\pi$ is given by the canonical projection on
$\RR^{d_{cu}-1}$,
$\pi:\EuScript{D}^{d_{cu}-1}\times\EuScript{D}^{d_s}\to\EuScript{D}^{d_{cu}-1}$
and $f:\EuScript{D}^{d_{cu}-1}\to\EuScript{D}^{d_{cu}-1}$
can be written $f=\pi\circ R|_{\EuScript{D}^{d_{cu}-1}}$. In
this setting the map $f$ is a piecewise $C^1$ map with
smooth domains on each projected strip $\pi(S_i)$.

\begin{lemma}
\label{le:quotientexp}
  Denote by $\pi(S_i)$ a smooth strip of $f$
  corresponding to the strip $S_i$ of $R$. There exists $\mu>1$ such
  that $\|Df(x)^{-1}\|\le\mu^{-1}<1$, for every $x\in \pi(S_i)$.
\end{lemma}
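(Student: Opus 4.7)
The plan is to compute the quotient map derivative through the semiconjugacy identity $f\circ\pi=\pi\circ R$ and push the unstable-cone expansion of $R$ from Proposition~\ref{prop:sec-cone} down to the quotient. The crucial enabling fact is the $C^1$-regularity of $\EuScript{W}^s$ guaranteed by $1$-strong dissipativeness via Theorem~\ref{thm:qdiss}, since this is exactly what makes the straightening chart $\psi$, and therefore $\pi$, actually $C^1$ in the higher codimension setting.

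First I would work in the coordinates provided by $\psi$, where the stable foliation becomes $\{a\}\times\D^{d_s}$ and $\pi$ is the canonical projection $(x,y)\mapsto x$. The invariance $R(\EuScript{W}^s_z(\Sigma))\subset\EuScript{W}^s_{Rz}(\Sigma')$ from Proposition~\ref{prop:invf} forces the first coordinate of $R$ in these coordinates to depend only on $x$, so the Jacobian $DR$ is block-lower-triangular and its upper-left block is precisely $Df$, a bona fide $C^1$ map on each projected smooth strip. Since $\ker D\pi=E^s(\Sigma)$ and $E^u(\Sigma)$ is a continuous complement of $E^s(\Sigma)$ along the compact cross-sections, the restriction $D\pi(z)|_{E^u_z(\Sigma)}:E^u_z(\Sigma)\to T_{\pi(z)}W_\Sigma$ is a linear isomorphism whose operator norm and inverse-norm are uniformly controlled: there are constants $0<c_1\le c_2$, independent of $z\in\Xi$, with $c_1\|w\|\le\|D\pi(z)w\|\le c_2\|w\|$ for every $w\in E^u_z(\Sigma)$.

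Given $v\in T_{\pi(z)}\pi(S_i)$, I would take its unique preimage $w^u\in E^u_z(\Sigma)$ under $D\pi(z)|_{E^u_z(\Sigma)}$. Since $E^u_z(\Sigma)\subset\cC^u_z(\Sigma,a)$, Proposition~\ref{prop:sec-cone} yields $DR(z)w^u\in\cC^u_{Rz}(\Sigma',a)$ and $\|\pi^u DR(z)w^u\|\ge\lambda_1^{-1}\|w^u\|$. Writing $DR(z)w^u=z^s+z^u$ with $z^u=\pi^u DR(z)w^u\in E^u_{Rz}(\Sigma')$ and recalling that $D\pi$ annihilates the stable summand $z^s$, differentiating $f\circ\pi=\pi\circ R$ at $z$ gives
\begin{align*}
  \|Df(\pi(z))v\|=\|D\pi(Rz)z^u\|\ge c_1\|z^u\|\ge c_1\lambda_1^{-1}\|w^u\|\ge \frac{c_1}{c_2\lambda_1}\|v\|.
\end{align*}

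Because Proposition~\ref{prop:sec-cone} allows us to enlarge $T_1$ and shrink $\lambda_1$ arbitrarily, I would choose $\lambda_1<c_1/c_2$, so that $\mu:=c_1/(c_2\lambda_1)>1$ provides the claimed estimate $\|Df(x)^{-1}\|\le\mu^{-1}<1$ uniformly on every smooth strip $\pi(S_i)$. The main obstacle is obtaining the uniform linear comparison between $\|v\|$ on $W_\Sigma$ and $\|w^u\|$ on $E^u_z(\Sigma)$ via $D\pi$: this is precisely the place where $1$-strong dissipativeness is indispensable, since for a merely topological stable foliation the projection $\pi$ would only be continuous and no such Lipschitz-type control between $v$ and its $E^u$-lift would be available, in contrast to the codimension-two setting where Lemma~\ref{le:lengthversusdistance} provides a substitute via the one-dimensional $cu$-cone field.
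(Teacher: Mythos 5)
Your proof is correct and follows essentially the same route as the paper: differentiate $f\circ\pi=\pi\circ R$, lift $v$ into the unstable direction, and feed it through the $cu$-cone expansion from Proposition~\ref{prop:sec-cone}. The genuine added value of your writeup is that you keep the two projections $D\pi$ and $\pi^u$ distinct: the paper's displayed chain $\|Df(x)v\|=\|\pi(R(x))DR(x)v\|\geq\lambda_1^{-1}\|v\|$ silently identifies the canonical projection $D\pi$ (onto $T W_\Sigma$, kernel $E^s(\Sigma)$) with the oblique projection $\pi^u$ (onto $E^u(\Sigma)$, kernel $E^s(\Sigma)$), whereas these differ by the isomorphism $D\pi|_{E^u}$ since $W_\Sigma$ is a $cu$-disk but not tangent to $E^u$ in general. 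Your constants $c_1,c_2$ record exactly this distortion, and your final step --- shrink $\lambda_1$ (by enlarging $T_1$ as allowed by Proposition~\ref{prop:sec-cone}) until $\lambda_1<c_1/c_2$ --- is the piece that turns the estimate into a genuine $\mu>1$, a step the paper leaves implicit by taking $\mu=\lambda_1^{-1}$ outright. The observation that invariance of the stable foliation (Proposition~\ref{prop:invf}) forces $DR$ to be block-lower-triangular in the straightening chart, with $Df$ as the top-left block, is a tidy reformulation but not a different key lemma.
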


\begin{proof}
  Given a vector $v\in\RR^{d_{cu}-1}$, there exists a curve
  $\gamma:I\to\EuScript{D}^{d_{cu}-1}$ with $\gamma(0)=x$ and
  $\gamma'(0)=v$. This $\gamma$ is a $cu$-curve by construction and
$$Df(x)v=D(\pi\circ R)(x)(v)=D\pi(R(x))DR(x)v=\pi(R(x))DR(x)v.$$ 
Using the Proposition~\ref{prop:sec-cone}, we have:
$\|Df(x)v\|=\|\pi(R(x))DR(x)v\|\geq \lambda_1^{-1}\|v\|.$ Hence,
denoting $\mu=\lambda_1^{-1}$ we get $\|Df(x)^{-1}\|\le\mu^{-1}$.
\end{proof}

\begin{remark}
  \label{rmk:smoothextquotient}
  We also define extensions $\wt{f_i}:\pi(\wt{S_i})\to\Xi$ of
  $f\mid \pi(S_i)$ satisfying $\wt{f_i}\circ\pi=\pi\circ\wt{R_i}$
  which clearly have the same properties stated in
  Lemma~\ref{le:quotientexp}.
\end{remark}


\section{Proof of robust expansiveness}
\label{sec:proof-main-theorems}

Here we prove the main Theorems~\ref{mthm:principal1}
and~\ref{mthm:principal2}.

\emph{Arguing by contradiction, we assume that the flow is not
  expansive on $U_0$}, the trapping region containing $\Lambda$, that
is, there exists $\epsilon>0$ such that for all $\delta>0$, we can
find $x,y\in U_0$ and $h\in S(\RR)$ satisfying
$\dist(\phi_t(x),\phi_{h(t)}(y))\leq\delta$ and
$\phi_{h(t)}(y)\notin \phi_{[t-\epsilon,t+\epsilon]}(x)$
for all $t\in\RR$.
Then, we can take $\delta_n\searrow 0$, $x_n,y_n\in U_0$ and
$h_n\in S(\RR)$ such that for all $t\in\RR$
\begin{align}\label{eq:nexpansive}
  d(\phi_t(x_n),\phi_{h_n(t)}(y_n))\leq\delta_n
  \qand
  \phi_{h_n(t)}(y_n)\notin \phi_{[t-\epsilon,t+\epsilon]}(x_n).
\end{align}
Since the set of accumulation points is not empty, there exists some
regular point $z\in\Lambda$ which is accumulated by the sequence of
$\omega$-limit sets $\omega(x_n)$ in the following sense: there exists
$z_n\in \omega(x_n)$ for each $n\ge1$ such that $z_n\to z$.

Using that $\C$ is a cover of $\Lambda$ and $\omega(x)\subset\Lambda$,
we can assume without loss of generality that $z$ is inside some
$\Sigma^\delta\in\Xi^\delta$. This guarantees that
$d(z,\partial^s\Sigma)>\delta$.  We can now choose a neighborhood $V$
of $z$ contained in $\Sigma^\delta(\epsilon_0)$ for which there exists
$n_0>1$ such that $z_n\in V$ for all $n\geq n_0$.

Then, the orbit of $x_n$ returns infinitely often to a neighborhood of
$z_n$ which, on its turn, is close to $z$ and inside $V$. For this, we
can take $\delta_n$ small enough (if necessary) so that the orbit of
$y_n$ visits $V$ infinitely many times.

Let $t_n$ be the corresponding time to the first intersection between
the orbit of $x_n$ and $\Sigma^\delta$. Replacing $x_n$,$y_n$, $t$ and
$h_n$ by $x^{(n)}=\phi_{t_n}(x_n)$, $y^{(n)}=\phi_{h_n(t_n)}(y_n)$,
$\tilde{t}=t-t_n$ and
$\tilde{h}_n(\tilde{t})={h}_n(\tilde{t}+t_n)-h_n(t_n)$ we still have
\begin{align*}
  d(\phi_{\tilde{t}}(x^{(n)}),\phi_{\tilde{h}_n(\tilde{t})}(y^{(n)}))\leq \delta_n
\qand
  \phi_{{\tilde{h}}_n(\tilde{t})}(y^{(n)})\notin
  \phi_{[\tilde{t}-\epsilon,\tilde{t}+\epsilon]}(x^{(n)}).
\end{align*}
Moreover, by construction of $V$, we can prove the following.
\begin{proposition}\label{quatro}
  There exists $K>0$, depending only on the angle between $\Sigma$ and
  the direction of the flow (see figure \ref{vizi}), such that for
  every $n\geq 0$ there are sequences $(\tau_{n,j})$ (with
  $\tau_{n,0}=0$) and $(\upsilon_{n,j})$ such that
\begin{itemize}
\item $x_{n,j}=\phi_{\tau_{n,j}}(x^{(n)})\in\Sigma^\delta$ and $y_{n,j}=\phi_{\upsilon_{n,j}}(y)\in \Sigma^\delta$ for all
  $j\geq 0$, where $\upsilon_{n,j}=h(\tau_{n,j})+\epsilon_{n,j}$;
\item $\tau_{n,j}-\tau_{n,j-1}>T_1$;\,
  $|\upsilon_{n,j}-h(\tau_{n,j})|<K\delta_n$ and
  $d(x_{n,j},y_{n,j})<K\delta_n$.
\end{itemize}
\end{proposition}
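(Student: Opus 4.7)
The plan is a standard flow-box and transversality argument, exploiting the fact that $z$ is a regular point in the interior of $\Sigma^\delta$. First, since $z_n\in\omega(x^{(n)})$ and $z_n\to z\in V\subset\Sigma^\delta(\ep_0)$, the forward orbit of each $x^{(n)}$ enters $V$ infinitely often. I set $\tau_{n,0}=0$ (so that $x_{n,0}=x^{(n)}\in\Sigma^\delta$ by the choice of $t_n$) and define $\tau_{n,j}$ inductively as the smallest time $t>\tau_{n,j-1}$ with $\phi_t(x^{(n)})\in V$. Because $V$ lies in a cross-section of the global Poincar\'e map constructed in Subsection~\ref{sec:global-poincare-map}, the gap $\tau_{n,j}-\tau_{n,j-1}>T_1$ is automatic.

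Next, to produce $\upsilon_{n,j}$, I use a single flow-box around $z$. Pick $\rho\in(0,\ep_0)$ and an open neighbourhood $W$ of $z$ contained in $\Sigma^\delta(\ep_0)$ so that the map $\chi:W\times(-\rho,\rho)\to M$, $\chi(x,s)=\phi_s(x)$, is a diffeomorphism onto its image. Let $\theta_0>0$ be a uniform lower bound on $W$ for the angle between $G$ and $T\Sigma^\delta$; such a bound exists by compactness and transversality. Discarding finitely many indices, we may assume $\delta_n<\rho\sin\theta_0$. By~\eqref{eq:nexpansive} the point $w_{n,j}:=\phi_{\tilde h_n(\tau_{n,j})}(y^{(n)})$ satisfies $d(w_{n,j},x_{n,j})\le\delta_n$ and therefore lies in the image of $\chi$. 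Writing $w_{n,j}=\phi_{-\ep_{n,j}}(\pi(w_{n,j}))$ with $\pi(w_{n,j})\in W$, the usual flow-box estimate gives
\begin{align*}
|\ep_{n,j}|\le (\sin\theta_0)^{-1}\delta_n.
\end{align*}
Set $\upsilon_{n,j}:=\tilde h_n(\tau_{n,j})+\ep_{n,j}$ and $y_{n,j}:=\phi_{\upsilon_{n,j}}(y^{(n)})=\pi(w_{n,j})\in W\subset\Sigma^\delta$.

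Both distance estimates now follow. The time-shift bound $|\upsilon_{n,j}-\tilde h_n(\tau_{n,j})|=|\ep_{n,j}|\le(\sin\theta_0)^{-1}\delta_n$ is already proven. For the spatial bound, the triangle inequality together with the mean value theorem applied to $t\mapsto\phi_t(w_{n,j})$ yields
\begin{align*}
d(x_{n,j},y_{n,j})\le d(x_{n,j},w_{n,j})+d(w_{n,j},y_{n,j})\le \delta_n+|\ep_{n,j}|\sup_{U_0}\|G\|\le K\delta_n,
\end{align*}
where $K$ depends only on $\theta_0$ and $\sup_{U_0}\|G\|$, which are fixed geometric quantities of the set up.

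The only substantive point requiring care is ensuring that $y_{n,j}$ lands on the same cross-section $\Sigma^\delta$ as $x_{n,j}$, rather than on a different component of $\Xi^\delta$ or on no cross-section at all. This is arranged by choosing $V$ strictly inside $\Sigma^\delta$ (using $d(z,\partial^s\Sigma)>\delta$) and shrinking $\delta_n$ so that $K\delta_n$ is smaller than this distance; the flow-box chart $\chi$ then keeps $\pi(w_{n,j})$ inside $W\subset\Sigma^\delta$, as needed.
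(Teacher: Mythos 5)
Your plan is the standard flow-box and transversality argument, which is precisely what the paper defers to: the paper's proof is a one-line citation to APPV, Theorem 7.13, and the argument you give is (modulo the point below) the intended one. Most of the estimates are fine: the decomposition $w_{n,j}=\phi_{-\ep_{n,j}}(\pi(w_{n,j}))$ with the flow-box chart and the resulting bounds on $|\ep_{n,j}|$ and $d(x_{n,j},y_{n,j})$ are exactly the right computations. (One small correction: the time-shift estimate should read $|\ep_{n,j}|\le \delta_n/\bigl(\sin\theta_0\cdot\inf_W\|G\|\bigr)$ — the flow speed enters — and you also need $V$ to lie well inside $W$ so that the $\delta_n$-ball around any $x_{n,j}\in V$ stays in the flow-box chart. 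Both only affect the value of $K$ and the choice of $n_0$.)

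The genuine gap is the justification of $\tau_{n,j}-\tau_{n,j-1}>T_1$. You define $\tau_{n,j}$ as the smallest time $t>\tau_{n,j-1}$ with $\phi_t(x^{(n)})\in V$ and assert the $T_1$ gap is automatic because $V$ sits in a cross-section of the global Poincar\'e map. That claim is false as stated: in the construction of Subsection~\ref{sec:global-poincare-map} the threshold $T_1$ was \emph{built into} the definition of the return time, $\tau(x)=\inf\{t>T_1:\phi_tx\in\bigcup\close{\Sigma_{y_j}(a_0)}\}$; it is not a geometric consequence of the choice of cross-sections. Nothing prevents the orbit of $x^{(n)}$ from crossing $\Sigma^\delta$ (and even $V$) at intervals shorter than $T_1$, so with your definition of $\tau_{n,j}$ the claimed spacing can fail. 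The fix is immediate and costs nothing else in the proof: define $\tau_{n,j}$ as the smallest time $t>\tau_{n,j-1}+T_1$ with $\phi_t(x^{(n)})\in V$. Since $z_n\in\omega(x_n)\cap V$ for $n\ge n_0$, the forward orbit enters $V$ at arbitrarily large times, so this sequence is infinite and well-defined, and the $T_1$ spacing is now guaranteed by construction. With that correction the proof is complete.
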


\begin{proof}
  This is contained in \cite[Theorem 7.13]{APPV}. The proof does not
  use (co)dimension nor hyperbolicity assumptions.
\end{proof}

We will fix a convenient $n$ in what follows and write $x=x^{(n)}$ and
$y=y^{(n)}$. We observe that $\phi_t(x)$ and $\phi_s(y)$ are not in
the local stable manifold $W^s_{loc}(\sigma)$ of some
$\sigma\in Sing(\Lambda)$, for all $t,s\in\RR$. For otherwise, these
points could not return to $\Xi$ infinitely often.  Then
$R^j(x)$ and $R^j(y)$ are well-defined for all $j\geq 0$ and we write
$x_j=R^j(x)$ and $y_j=R^j(y)$ in what follows.

\begin{figure}[h]
  \begin{center}
 \includegraphics[width=6cm]{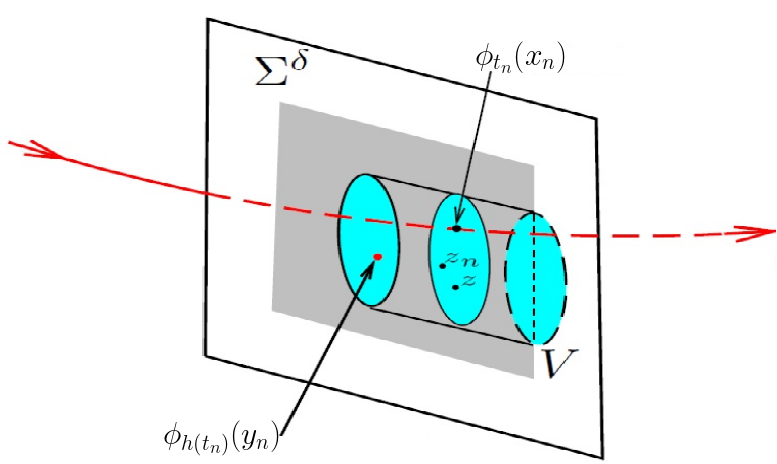}
\end{center}
\caption{The neighborhood $V$.}
\label{vizi}
\end{figure}

\begin{remark}\label{obsdist}
  Whenever $R(x_j)=\phi_{\tau(x_j)}(x_j)$ and
  $R(y_j)=\phi_{\tau(y_j)}(y_j)$ are in the same $\Sigma\in\Xi$, we
  can estimate as in Proposition~\ref{quatro} to ensure that
  $\tau(y_j)=\tau(x_j)+\epsilon_j$ and so
  $d(R(x_j),R(y_j))<K(\Sigma)\delta_n$.

  In general, we have that $x_{j+1}=R^j(x)\in\Xi(a)$ and we can find
  $\wt{\tau_{j+1}(y)}$ such that setting
  $\wt{y_{j+1}}=\wt{R}(y_j)=\phi_{\wt{\tau_{j+1}(y)}}(y)\in\Sigma\in\Xi$
  we get $x_{j+1}=R(x_j)=\wt{R}(x_j)\in\Sigma(a)$; and also
  $\wt{\tau(y)}=\tau(x_j)+\epsilon_j$.

  Since there exists finitely many sections in $\Xi$, we can take an
  uniform constant $K$ such that
  $d(\wt{R}(x_j),\wt{R}(y_j))<K\delta_n$ for all $\Sigma\in\Xi$ such
  that $R(x_j)$,$R(y_j)\in \Sigma$.

  Then $x_j=R^j(x)=\phi_{\tau(x_j)}(x)$ is well-defined for all
  $j\geq 0$ and we set $y_j=\wt{R}(y_{j-1})=\phi_{\tau(y_j)}(y)$ for
  all $j\ge1$ in what follows.
\end{remark}

The essential part of the proof of the main theorems is to deduce the
following.

\begin{theorem}
\label{thm:expansivepoincare}
Given $\epsilon_0>0$ there exists $\delta_0>0$ such that, if
$x,y\in\Xi$ 
satisfy
\begin{enumerate}
\item there exist $\tau_j$ such that $x_j=\phi_{\tau_j}(x)\in\Xi$ with
  $\tau_{j}-\tau_{j-1}>T_1$ for all $j\ge1$;
\item $d\big(\phi_tx,\phi_{h(t)}y\big) < \delta_0$ for all $t>0$ and
  some $h\in S(\RR)$;
\end{enumerate}
then there exists $j\ge1$ and
$\eta\in[\tau_j-\epsilon_0,\tau_j+\epsilon_0]$ such that
$\phi_{h(\tau_j)}y \in \cW^s_{\phi_\eta x}$.
\end{theorem}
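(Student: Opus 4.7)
The strategy is a proof by contradiction driven by the uniform expansion of the quotient Poincar\'e map $f$ on the family of cross-sections $\Xi$. Suppose the conclusion fails, so that for every $j\ge1$ and every $\eta\in[\tau_j-\epsilon_0,\tau_j+\epsilon_0]$ we have $\phi_{h(\tau_j)}y\notin\cW^s_{\phi_\eta x}$. Using hypothesis~(1) together with Proposition~\ref{quatro} and Remark~\ref{obsdist}, I obtain a sequence of return times $\tau'_j$ for $y$ with $|\tau'_j-h(\tau_j)|$ uniformly small and with $y_j:=\phi_{\tau'_j}(y)$ in the same cross-section $\Sigma_j\in\Xi$ as $x_j=\phi_{\tau_j}(x)$, appealing to the smooth extensions $\wt{R_i}$ of Remark~\ref{rmk:extensions} when $R(y_j)$ falls just outside $\wh{\Xi(a_0)}$. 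Hypothesis~(2) then yields $d(x_j,y_j)<K\delta_0$ for a uniform constant $K$, while the failure of the conclusion combined with the flow-box structure of thickness $\epsilon_0$ translates into $y_j\notin W^s_{x_j}(\Sigma_j)$ for all $j\ge1$.

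Next I pass to the quotient by the stable foliation in each cross-section. Let $\pi\colon\Sigma\to W_\Sigma$ denote the projection along stable leaves. From $y_j\notin W^s_{x_j}(\Sigma_j)$ one has $\pi(x_j)\ne\pi(y_j)$. By Lemma~\ref{le:lengthversusdistance} in the $d_{cu}=2$ case, and by Lemma~\ref{le:legthvsdisthighdcu} under $1$-strong dissipativeness when $d_{cu}>2$, the distance between $\pi(x_j)$ and $\pi(y_j)$ is comparable to $d\bigl(W^s_{x_j}(\Sigma_j),W^s_{y_j}(\Sigma_j)\bigr)\le d(x_j,y_j)<K\delta_0$. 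Hence these projections remain within a distance proportional to $\delta_0$ of one another throughout the whole forward iteration.

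On the other hand, Lemma~\ref{le:quotientexp} and its codimension-two analogue (obtained from Proposition~\ref{prop:sec-cone}) assert that on every smooth strip $\pi(S_i)$ the quotient map satisfies $\|Df^{-1}\|\le\mu^{-1}<1$ with $\mu=\lambda_1^{-1}>1$. Provided both $\pi(x_j)$ and $\pi(y_j)$ lie in the same smooth strip, or more generally in the enlarged domain $\pi(\wt{S_i})$ of Remark~\ref{rmk:smoothextquotient}, one obtains
\[
  d\bigl(\pi(x_{j+1}),\pi(y_{j+1})\bigr)\ge \mu\cdot d\bigl(\pi(x_j),\pi(y_j)\bigr).
\]
Iterating yields geometric growth $d(\pi(x_j),\pi(y_j))\ge\mu^{j-1}\,d(\pi(x_1),\pi(y_1))$, which contradicts the uniform upper bound of the previous paragraph once $j$ is large enough, providing the desired contradiction.

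The principal obstacle is guaranteeing that the ``same smooth strip'' hypothesis persists along the iteration, since $f$ is discontinuous across $\pi(\Gamma)$. This is handled by choosing $\delta_0$ small compared to the geometric constant $\delta_1$ of Remark~\ref{rmk:extensions}: if $K\delta_0<\delta_1$, then whenever $x_j\in S_i$ the shadowing bound $d(x_j,y_j)<K\delta_0$ forces $y_j\in\wt{S_i}$, where the extension $\wt{R_i}$ and its quotient $\wt{f_i}$ satisfy the hyperbolic estimates of Propositions~\ref{prop:secUH} and~\ref{prop:sec-cone}. Orbits cannot accumulate on the singular locus $\Gamma_0$ without spending arbitrarily long times near an equilibrium (by Lemma~\ref{lem:log}), but such dwelling would force $y$ to remain trapped in the flow-box of a singularity and is incompatible with hypothesis~(1) providing returns to $\Xi$ with $\tau_j-\tau_{j-1}>T_1$ together with the shadowing bound. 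Thus only finitely many strips are visited with uniformly bounded return time, and the geometric contradiction closes the argument, yielding the claimed $j\ge1$ and $\eta\in[\tau_j-\epsilon_0,\tau_j+\epsilon_0]$.
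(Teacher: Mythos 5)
Your overall strategy coincides with the paper's: the paper first proves Claim~\ref{yxsigma} (that $x_j\in\cW^s_{y_j}(\Xi)$ for some $j$) by contradiction via geometric growth of the distance between stable leaves under the return map, handling $d_{cu}=2$ directly with $cu$-curves and $d_{cu}>2$ via the $C^1$ quotient map, and then derives the theorem from the claim using the flow-box structure. You fold both steps into a single contrapositive argument, which is fine in principle.

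However, there is a genuine flaw in your handling of the singular locus $\Gamma_0$. You argue that ``orbits cannot accumulate on $\Gamma_0$ without spending arbitrarily long times near an equilibrium, \dots\ incompatible with hypothesis~(1) providing returns to $\Xi$ with $\tau_j-\tau_{j-1}>T_1$.'' This reasoning does not work: hypothesis~(1) gives a lower bound $\tau_j-\tau_{j-1}>T_1$, not an upper bound, so nothing prevents the $x$-orbit from passing arbitrarily close to $\Gamma_0$ and lingering near a singularity for a very long time before returning. The conclusion you draw from it (``only finitely many strips are visited with uniformly bounded return time'') is neither true in general nor needed. The concern that actually has to be addressed is not accumulation on $\Gamma_0$ but the possibility that $x_j$ and $y_j$ lie on opposite sides of a singular leaf inside $\Gamma_0$, in which case the Poincar\'e images $x_{j+1}$ and $y_{j+1}$ would land in different cross-sections and the expansion step would break down. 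The paper resolves this by construction, not by a dwelling-time argument: the $y_j$ are \emph{defined} via the smooth extensions $\wt R$ (Remark~\ref{obsdist}) so that $y_j$ always lies in the same cross-section as $x_j$; given the shadowing bound $d(x_j,y_j)<K\delta_0$, if $x_j$ and $y_j$ were separated by a singular leaf of $\Gamma_0$, the local saddle structure would force $\wt R(x_j)$ and $\wt R(y_j)$ into distinct cross-sections, contradicting that definition. You should replace your dwelling-time paragraph with this local argument near the saddle, which is the mechanism that actually rules out the problematic case and keeps the chain of expansion estimates intact.
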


We postpone the proof of this result to
Subsection~\ref{sec:proof-claim-refyxs} and deduce now the statements
of the main theorems.

\subsection{Proof of the main Theorems~\ref{mthm:principal1}
  and~\ref{mthm:principal2}}
\label{sec:conclus-proof}

We assume the conclusion of Theorem~\ref{thm:expansivepoincare} and
finish the proof of both main Theorems~\ref{mthm:principal1}
and~\ref{mthm:principal2}, proceeding as in \cite[Subsection
3.3.4]{APPV}.

We note the following geometric consequence of transversality of the
flow to the stable foliation in $U_0$.

\begin{lemma}\label{le:proximo}
  There exist small $\rho,c>0$, depending only on the flow, such that
  if $z_1, z_2, z_3$ are points in $U_0$ satisfying
  $z_3\in \phi_{[-\rho,\rho]}(z_2)$ and
  $z_2\in B(z_1,\rho)\cap\cW_{z_1}^{s}$, with $z_1$ away from any
  equilibria, then
  $ d(z_1,z_3) \ge c \cdot \max\{d(z_1,z_2),d(z_2,z_3)\}.  $
\end{lemma}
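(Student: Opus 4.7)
The plan is to exploit the uniform transversality between the flow direction $G$ and the stable foliation $\cW^s$, which holds on any compact subset of $U_0$ disjoint from the equilibria. More precisely, since $z_1$ is bounded away from $\sing(G)$, compactness yields a neighborhood $N$ of $z_1$ in $U_0$ and constants $g_0, \alpha_0 > 0$ such that $\|G(w)\| \geq g_0$ and the angle between $G(w)$ and the stable subspace $E^s_w = T_w \cW^s_w$ is at least $\alpha_0$ for every $w \in N$; here we use both the continuity of $G$ and the continuous dependence of the stable bundle provided by Theorem~\ref{thm:Ws}. We shrink $\rho$ so that $B(z_1, 2\rho) \cup \phi_{[-\rho,\rho]}(B(z_1,\rho)) \subset N$.

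For $z_2 \in B(z_1, \rho) \cap \cW^s_{z_1}$ and $z_3 = \phi_s(z_2)$ with $|s| \leq \rho$, a first-order Taylor expansion of the flow (using uniform bounds on $\|DG\|$ on $\bar N$) gives constants $C_1, C_2 > 0$ such that $C_1 |s| \leq d(z_2, z_3) \leq C_2 |s|$ and $z_3 - z_2$ is $\epsilon$-close (in the ambient chart / exponential chart at $z_2$) to $s\, G(z_2)$, with $\epsilon$ arbitrarily small by shrinking $\rho$. Since $G(z_2)$ makes angle $\geq \alpha_0$ with $E^s_{z_2}$, the transverse component of $z_3 - z_2$ (projection onto a complement of $E^s_{z_2}$) has norm $\geq \frac{1}{2}|s| g_0 \sin \alpha_0$. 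Because the leaf $\cW^s_{z_1}$ is $C^1$ at $z_2$ with uniformly bounded transverse Lipschitz constant (again by Theorem~\ref{thm:Ws}), the distance from $z_3$ to $\cW^s_{z_1}$ is at least, say, one quarter of this transverse component, for $\rho$ small enough. Using $z_1 \in \cW^s_{z_1}$ this yields
\begin{equation*}
  d(z_1,z_3) \;\geq\; d(z_3, \cW^s_{z_1}) \;\geq\; c_1\, d(z_2,z_3)
\end{equation*}
with $c_1 = \frac{g_0 \sin\alpha_0}{8 C_2}$ depending only on the flow.

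For the bound against $d(z_1,z_2)$, split into cases. If $d(z_2,z_3) \leq \tfrac12 d(z_1,z_2)$, the triangle inequality gives $d(z_1,z_3) \geq d(z_1,z_2) - d(z_2,z_3) \geq \tfrac12 d(z_1,z_2)$. Otherwise $d(z_2,z_3) > \tfrac12 d(z_1,z_2)$, and the previous paragraph yields $d(z_1,z_3) \geq c_1 d(z_2,z_3) \geq \tfrac{c_1}{2} d(z_1,z_2)$. Taking $c := \min\{c_1, \tfrac12, \tfrac{c_1}{2}\}$, both $d(z_1,z_3) \geq c \cdot d(z_1,z_2)$ and $d(z_1,z_3) \geq c \cdot d(z_2,z_3)$ hold simultaneously, proving the claim.

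The main obstacle is handling the second-order/curvature error terms uniformly: one must ensure the linear approximations of both the flow (replacing $\phi_s(z_2)$ by $z_2 + sG(z_2)$) and the stable leaf (replacing $\cW^s_{z_1}$ by its tangent plane at $z_2$) introduce errors that are a small fraction of the principal transverse term $|s| g_0 \sin\alpha_0$. This is where the uniform lower bound on $\|G\|$ away from equilibria is essential, since near equilibria $\|G\|$ degenerates and the transverse estimate would collapse; and it is where we must choose $\rho$ small depending only on $g_0, \alpha_0$, the uniform Lipschitz constant of the stable foliation, and the $C^1$ bound on $G$ over $\bar N$ — quantities intrinsic to the flow.
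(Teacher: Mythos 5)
Your proof is correct and follows essentially the same route as the paper: the paper's proof is a one-line citation that the angle between $G(x)$ and $E^s_x$ is uniformly bounded away from zero (since $G \in E^{cu}$), together with references to \cite[Lemma 3.2]{APPV} and \cite[Lemma 7.12]{AraPac2010}, and your argument is precisely the transversality estimate that those references carry out — uniform lower bound on $\|G\|$ away from $\sing(G)$, uniform angle bound between the flow direction and the stable tangent, first-order control of the flow and of the $C^1$ stable leaf (uniform by Theorem~\ref{thm:Ws}), followed by the triangle-inequality case split to cover both $d(z_1,z_2)$ and $d(z_2,z_3)$.
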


\begin{proof}
  This is a direct consequence of the fact that the angle between
  $E_x^{s}$ and the flow direction $G(x)$ is bounded from zero which,
  in its turn, follows from the fact that the latter is contained in
  the center-unstable sub-bundle $E^{cu}$; see e.g. \cite[Lemma
  3.2]{APPV} or \cite[Lemma 7.12]{AraPac2010} and the left hand side
  of Figure~\ref{fig:intersecaoWs}.
\end{proof}

\begin{remark}\label{rmk:cfieldsize}
  Since we may shrink the value of $c>0$ in Lemma~\ref{le:proximo}, we
  assume without loss of generality that $10c<\sup\{\|G(z)\|:z\in\Xi\}$.
\end{remark}

We fix $\epsilon_0=\epsilon$ as in \eqref{eq:nexpansive} and then consider
$\delta_0$ as given by Theorem~\ref{thm:expansivepoincare}.

\begin{remark}\label{rmk:obsadj}
  \begin{enumerate}
  \item We fix $T_1$ large enough so that the construction of the
    global Poincar\'e return map, described in
    Section~\ref{sec:preliminary-results}, provides
    $\lambda_1\in (0,1)$ satisfying
    $\mu^{-1}=\lambda_1<\min\{1/2,\kappa, L/K\}$, where the constants
    are given by Lemmas~\ref{le:legthvsdisthighdcu}
    and~\ref{le:quotientexp}. From now on we fix $\Xi$ and the smooth
    strips $\{S_i\}\subset\Xi$.
  \item We fix $n$ such that $\delta_n$ is sufficiently small
    according to the following conditions.
    \begin{itemize}
    \item $\delta_n<\delta_0$ and $\delta_n < c^2\cdot\rho<c\rho$.
  
    \item \emph{Suppose that $x_j$ and $y_j$ are in the same strip
        $S_i$ of $R$ and consequently $\hat{x_j}=\pi(x_j)$ and
        $\hat{y_j}=\pi(y_j)$ are in the same smooth domain of
        $\hat{S_i}=\pi(S_i)$ of $f$.} We can choose $\delta_n$ small
      enough so that, if the ball $B(x_j,K\delta_n)$ is not entirely
      contained in $S_i$, then $B(x_j,K\delta_n)$ shall be entirely
      contained in the smooth strip $\wt{S_i}$ of $\wt{R}$, the
      extension map of $R$; and $B(\hat{x_j},K\delta_n)$ is contained
      in $\pi(\wt{S_i})$ the extended smooth domain of $\wt{f}$.
  
    \item \emph{If $x_j$ and $y_j$ are not in the same smooth strip of
        $\Sigma$, then we can assume that $x_j$ and $y_j$ are in
        adjoining strips.}  Indeed, it is enough to take
      \begin{align*}
        2K\delta_n<\min\{d(S_j,S_k): S_j,S_k
        \text{  are  non-adjoining strips in  } \Xi\}.
      \end{align*}
      Consequently, $y_j$ belongs to the extended domain
      $\wt{S}$ which contains $x_j$.

    \end{itemize}
  \end{enumerate}

\end{remark}
Now we apply Theorem~\ref{thm:expansivepoincare} to $x=x_0=x^{(n)}$,
$y=y_0=y^{(n)}$, $x_j=R^j(x_0)=\phi_{\tau_j}(x_0)$ and $h=h_n$: where
hypothesis (1) corresponds to the choice of $\tau_{n,j}$ from
Proposition~\ref{quatro} and, with these choices, hypothesis (2)
follows by the choice of $x,y$ from~\eqref{eq:nexpansive}.

\begin{figure}[h]
\centering
\includegraphics[width=10cm]{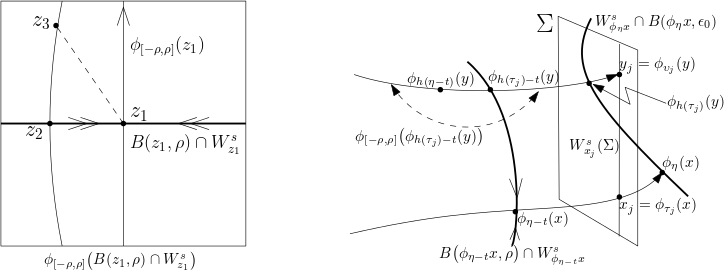}
\caption{\label{fig:intersecaoWs}Sketch of the setting of
  Lemma~\ref{le:proximo} on the left; and of the proof of
  Theorems~\ref{mthm:principal1} and~\ref{mthm:principal2} using
  Theorem~\ref{thm:expansivepoincare} on the right hand side.}
\end{figure}

Therefore, we obtain
$\phi_{h(\tau_j)}y\in B(\phi_\eta x,\epsilon_0)\cap\cW^{s}(\phi_\eta
x)$ for some $\eta\in[\tau_j-\epsilon_0, \tau_j+\epsilon_0]$ and
$j\ge1$. From the right hand side of \eqref{eq:nexpansive} we have
$\phi_{h(\tau_j)}y \neq \phi_\eta x$.  Hence, since the leaves of the
stable foliation are expanded under backward iteration, there exists a
maximum $\theta>0$ such that for all $0\le t\le \theta$ (see the right
hand side of Figure~\ref{fig:intersecaoWs})
$$
\phi_{h(\tau_j)-t}y\in B(\phi_{\eta-t}x,\rho)\cap
\cW^{s}_{\phi_{\eta-t}x}
\quad\text{and}\quad
\phi_{h(\eta-t)}y \in \phi_{[-\rho,\rho]}(\phi_{h(\tau_j)-t}y).
$$
Moreover, $x_j$ is close to $\Xi$ which is uniformly bounded away from
the equilibria, and then  $\|G(\phi_tx_j)\|\ge c$ for
$0\le t\le \theta$.  Since $\theta$ is maximum
\begin{align*}
  \text{either  }
  d\big(\phi_{h(\tau_j)-t}y,\phi_{\eta-t}x\big)&\ge\rho
  \text{  or  }
  d\big( \phi_{h(\eta-t)}y, \phi_{h(\tau_j)-t}y\big)\ge c\rho
  \text{  for  } t=\theta.
\end{align*}
Applying now Lemma~\ref{le:proximo} we deduce that
$ d\big(\phi_{\eta-t}x,\phi_{h(\eta-t)}y\big)\ge c^2\rho>\delta_n$
contradicting the choice of $x,y$ from~\eqref{eq:nexpansive}.  This
completes the proof of expansiveness for $\phi_t$ in the trapping
region $U_0$ of $\Lambda$ assuming
Theorem~\ref{thm:expansivepoincare}.

\subsubsection{On robustness of expansiveness}
\label{sec:robustn-expans}

To obtain robustness of expansiveness, we observe that
\begin{enumerate}
\item there exists a neighborbood $\V\subset\X^1(M)$ of $G$ in the
  family of all $C^1$ vector fields with the $C^1$ topology for which
  the family $\Xi$ of adapted cross-sections for $\Lambda$ and $G$
  remains a family of adapted cross-sections for
  $\Lambda_Y(U_0)=\cap_{t>0}\ov{\psi_t U_0}$ and all $Y\in\V$, where
  $\psi_t$ is the flow generated by $Y$.

  This is a consequence of $C^1$ closeness between $Y$ and $G$ and the
  continuity of the map $Y\in\V\mapsto\Lambda_Y(U_0)$ in the Hausdorff
  topology; this holds for every isolated set: see e.g. \cite[Lemma
  2.3]{AraPac2010}.
\item Consequently, the hyperbolicity constants for the global
  Poincar\'e return map $R_Y$ can be taken uniform on $Y\in\V$,
  including the threshold time $T_1$ and the value of $K$.
\item Moreover, the smooth strips of $R_Y$ are uniformly close in the
  Riemannian distance to the corresponding strips of $R=R_G$, and so
  $\epsilon_0$ and $\delta_0=\delta_n$ is the previous argument can
  also be taken uniformly on $Y\in\V$.
\end{enumerate}
Hence, Theorem~\ref{thm:expansivepoincare} holds for all $Y\in\V$ with
constant values of $\epsilon_0$ and $\delta_0$. This is enough to
conclude that expansiveness is robust for all sectional-hyperbolic
attracting sets in the setting of Theorems~\ref{mthm:principal1}
and~\ref{mthm:principal2}.

This completes the proof of Theorems~\ref{mthm:principal1}
and~\ref{mthm:principal2} assuming
Theorem~\ref{thm:expansivepoincare}.

\subsection{Proof of positive expansiveness}
\label{sec:proof-claim-refyxs}

Now we prove Theorem~\ref{thm:expansivepoincare}.  We first
assume the following.
\begin{claim}\label{yxsigma}
  For some $j\ge0$ we have $x_j\in\cW^s_{y_j}(\Xi)$.
\end{claim}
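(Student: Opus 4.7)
The plan is to argue by contradiction: assume that $x_j\notin \cW^s_{y_j}(\Xi)$ for every $j\ge0$ and derive a contradiction by pushing the problem through the stable-holonomy projection $\pi:\Sigma\to W_\Sigma$ and exploiting the uniform expansion of the quotient Poincar\'e map $f$ established in Lemma~\ref{le:quotientexp}.

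First, by Proposition~\ref{quatro} together with Remark~\ref{obsdist}, the sequence $y_j=\wt R(y_{j-1})$ is well defined in $\Xi$ for every $j\ge1$ and satisfies $d(x_j,y_j)<K\delta_n$ with a constant $K$ uniform across $\Xi$. The choice of $\delta_n$ in Remark~\ref{rmk:obsadj} guarantees that whenever $x_j$ sits in a smooth strip $S_i$, the companion point $y_j$ lies either in $S_i$ itself or in the extended strip $\wt S_i$, and $B(\hat x_j,K\delta_n)\subset \pi(\wt S_i)$, where $\hat x_j:=\pi(x_j)$ and $\hat y_j:=\pi(y_j)$. Hence the extended quotient map $\wt f$ (Remark~\ref{rmk:smoothextquotient}), which satisfies the same expansion estimate as $f$, is defined simultaneously at both $\hat x_j$ and $\hat y_j$ with the uniform rate $\mu=\lambda_1^{-1}>1$.

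Under the contradiction hypothesis, $x_j$ and $y_j$ belong to distinct stable leaves, so $\hat x_j\neq \hat y_j$ for every $j\ge0$. Using Lemma~\ref{le:lengthversusdistance} in the codimension-two case, or Lemma~\ref{le:legthvsdisthighdcu} in the higher codimensional case (where $1$-strong dissipativeness provides the $C^1$ chart $\psi$ of Subsection~\ref{sec:constr-c1-local}), the stable-leaf distance $d(\cW^s_{x_j}(\Xi),\cW^s_{y_j}(\Xi))$ is comparable up to uniform multiplicative constants with $d(\hat x_j,\hat y_j)$; in particular $\alpha_j:=d(\hat x_j,\hat y_j)>0$. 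Now the intertwining relation $\wt f\circ\pi=\pi\circ\wt R$ combined with $\|D\wt f^{-1}\|\le \mu^{-1}$ gives, by integrating along a minimizing curve joining $\hat x_j$ to $\hat y_j$ inside the common extended domain $\pi(\wt S_i)$,
\begin{align*}
d(\hat x_{j+1},\hat y_{j+1})\;\ge\;\mu\cdot d(\hat x_j,\hat y_j).
\end{align*}

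Iterating yields $\alpha_j\ge\mu^j\alpha_0\to\infty$, which contradicts the uniform upper bound $\alpha_j\le\operatorname{diam}(\Xi)<\infty$ coming from the fact that every $\hat x_j,\hat y_j$ lives inside one of the finitely many bounded disks $W_\Sigma$, $\Sigma\in\Xi$. The main obstacle in carrying out this plan is the bookkeeping across strips: at every return, one must verify that the pair $(\hat x_j,\hat y_j)$ sits in a common domain where the expansion is available, so that one may legitimately apply $\wt f$ to both points and obtain the multiplicative factor $\mu$. This is precisely the reason for introducing the smooth extensions $\wt R_i$ and $\wt f_i$ (Remarks~\ref{rmk:existensions} and~\ref{rmk:smoothextquotient}) and for the calibrated choice of $\delta_n$ in Remark~\ref{rmk:obsadj}; once these are in place, the expansion step iterates cleanly and the divergence contradiction closes the argument.
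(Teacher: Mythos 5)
Your overall plan---argue by contradiction, pass through the stable holonomy $\pi$ to the quotient Poincar\'e map, exploit the uniform expansion $\mu=\lambda_1^{-1}>1$, iterate, and collide with a uniform bound---is the same strategy the paper uses in its \emph{higher-codimensional} branch of the argument. Two issues, one substantive and one a bookkeeping slip, prevent the proposal from standing as written.

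The substantive gap is that you apply the quotient-map machinery uniformly to both cases. Lemma~\ref{le:quotientexp} is a statement about $Df$, and its proof hinges on the $C^1$ local chart $\psi$ constructed in Subsection~\ref{sec:constr-c1-local}, which in turn requires the stable foliation to be $C^1$ on $U_0$; by Theorem~\ref{thm:qdiss} this is only guaranteed under the $1$-strong-dissipativity hypothesis. In the codimension-two setting of Theorem~\ref{mthm:principal1} that hypothesis is \emph{not} assumed, so the quotient map $f$ is a priori only continuous (Lipschitz on strips at best), $Df$ is unavailable, and Lemma~\ref{le:quotientexp} cannot be invoked. This is exactly why the paper runs a separate codimension-two argument that stays entirely on the cross-section: it pushes a $cu$-curve $\gamma$ joining $x_j$ to $\cW^s_{y_j}(\Sigma)$ forward by $R$, uses Propositions~\ref{prop:invf} and~\ref{prop:secUH} to get length expansion $\ell(R\circ\gamma)\ge\lambda_1^{-1}\ell(\gamma)$, and then uses Lemma~\ref{le:lengthversusdistance} (which only needs the one-dimensional core of the $cu$-cone, not smoothness of the foliation) to convert length expansion into expansion of $d(x_j,\cW^s_{y_j}(\Xi))$. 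Your proposal cites Lemma~\ref{le:lengthversusdistance} but then still routes through $f$ and Lemma~\ref{le:quotientexp}; you need to replace that step with the direct $cu$-curve argument when $d_{cu}=2$.

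The smaller slip: to obtain $d(\hat x_{j+1},\hat y_{j+1})\ge\mu\,d(\hat x_j,\hat y_j)$ from the contraction bound $\|D\wt f^{-1}\|\le\mu^{-1}$, one must integrate the \emph{inverse} derivative along a curve joining the \emph{images} $\hat x_{j+1}$ and $\hat y_{j+1}$ (the paper uses the Euclidean segment $[\hat x_{j+1},\hat y_{j+1}]$, which by the calibration in Remark~\ref{rmk:obsadj} lies inside $B(\hat x_{j+1},K\delta_n)\subset\pi(\wt S_j)$, and applies the Mean Value Theorem to $g=(f\mid_B)^{-1}$). You instead say you integrate along a minimizing curve joining $\hat x_j$ to $\hat y_j$; integrating the lower bound $\|D\wt f\|\ge\mu$ along a curve between the preimages only bounds $\ell(\wt f\circ\gamma)$ from below, which is an upper-bound constraint on a specific curve, not a lower bound on the distance between its endpoints. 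Swap the roles of preimage/image there and the step closes. The final contradiction via $\alpha_j\ge\mu^j\alpha_0$ against boundedness works whether you use $\operatorname{diam}\Xi$ as you propose or $\delta_0$ as the paper does.
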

By the invariance and uniqueness of the stable foliation (given by
Theorem~\ref{thm:Ws}), this implies that $x_j\in \cW^s_{y_j}(\Xi)$ and
$y_j\in\cW^s_{x_j}(\Xi)$ for all $j\ge0$.

We postpone the proof of this claim and explain first, following
\cite[Section 7.2.7]{AraPac2010} and \cite[Section 3.3.4]{APPV}, how
Theorem~\ref{thm:expansivepoincare} follows from Claim~\ref{yxsigma}.

\begin{proof}[Proof of Theorem~\ref{thm:expansivepoincare}]
  Let $j\ge0$ be such that $y_j\in W^s(x_j,\Sigma)$. Then, according
  to Proposition~\ref{quatro} and Remark~\ref{obsdist}, we have
  $|\tau(y_j)-h(\tau(y_j))|=\epsilon_j<K\cdot\delta_0$ and, by
  construction of the stable foliation on cross-sections, there exists
  a small $\bar\epsilon>0$ such that $\phi_t(y_j)\in\cW^s_{x_j}$ for
  some $|t|<\bar\epsilon$.  Therefore the trajectory
  $\cO_y=\phi_{[\tau(y_j)-K\cdot\delta_0-\bar\epsilon, \tau(y_j) +
    K\cdot\delta_0 +\bar\epsilon]}(y)$ must contain
  $\phi_{h(\tau(y_j))}(y)$. We note that this holds for all
  sufficiently small values of $\delta_0>0$ fixed from the beginning.

  Let $\epsilon_0>0$ be given and let us consider the piece of the
  orbit $\cO_x:=\phi_{[\tau_j-\epsilon_0,\tau_j+\epsilon_0]}(x)$ and
  the piece of the orbit of $x$ whose stable manifolds intersect
  $\cO_y$, i.e.,
  \[ {\cO}_{xy}=\left\{ \phi_s(x) : \exists
      t\in[\tau(y_j)-K\cdot\delta_0-\bar\epsilon, \tau(y_j) +
      K\cdot\delta_0 +\bar\epsilon]\mbox{ s.t.  }
      \phi_t(y)\in\cW^{s}_{\phi_s(x)} \right\}.
  \]
  Since $\phi_t(y_j)\in \cW^s_{x_j}$ we conclude that $\cO_{xy}$ is a
  neighborhood of $x_j=\phi_{\tau_j}(x)$. Moreover, this neighborhood
  can be made as small as needed by letting $\delta_0$ and so
  $\bar\epsilon$ small enough. In particular this ensures that
  $\cO_{xy}\subset \cO_x$ and so
  $\phi_{h(\tau_j)}(y)\in \cup_{z\in\cO_x}\cW^s_z$.  As this finishes
  the proof of Theorem~\ref{thm:expansivepoincare} assuming
  Claim~\ref{yxsigma}.
\end{proof}

\subsection{Proof of the claim}
\label{sec:proof-claim}

We argue by contradiction, assumming that $y_j\notin W^s_{x_j}(\Xi)$
for all $j\ge1$ and split the argument into the codimension two case
and the higher codimension case. The goal is to show that the pairs
$x_j$ and $y_j$ are either in the same smooth strip of the global
Poincar\'e return map $R$, or else they are in the same extended smooth
strip of the extension of the global Poincar\'e map $\wt{R}$.

\subsubsection{The codimension-two case}
\label{sec:codimension-two-case}

Let us assume first that $x_j$ and $y_j$ are in the same strip $S_i$
of $R$ in some cross-section $\Sigma$ for some $j\ge1$.

We can consider a $cu$-curve $\gamma:[0,1]\to S$ such that
$\gamma(0)=x_j$ and $\gamma(1)\in\cW^s_{y_j}(\Sigma)$ and
\begin{itemize}
\item by Proposition~\ref{prop:invf}, we have invariance of the
  stable foliation inside cross-sections;
\item by Proposition~\ref{prop:secUH}, we have invariance and
  expansion of $cu$-cones under iteration of smooth domains.
\end{itemize}
Hence $\zeta=R\circ\gamma$ is another $cu$-curve contained in some
$\Sigma'\in\Xi$ such that $\zeta(0)=x_{j+1}\in\Sigma'$ and
$\zeta(1)\in\cW^s_{y_{j+1}}(\Sigma')$ and, moreover,
$\ell(\zeta)\ge\lambda_1^{-1}\ell(\gamma)$.  Since we can find a point
$\wh{y_{j+1}}\in\cW^s_{y_{j+1}}(\Xi)=\cW^s_{y_{j+1}}(\Sigma')$ so that
$d\big(x_{j+1},\cW^s_{y_{j+1}}(\Xi)\big) =
d\big(x_{j+1},\wh{y_{j+1}}\big)$, we use the estimate of
Lemma~\ref{le:lengthversusdistance} to arrive at
\begin{align*}
  d\big(x_{j+1},\wh{y_{j+1}}\big)
  \ge
  \kappa\cdot\ell(\zeta)
  \ge
  \frac{\kappa}{\lambda_1}\ell(\gamma)
  \ge
  \frac{\kappa}{\lambda_1}\cdot
  d\big(x_j,\cW^s_{y_j}(\Xi)\big)
\end{align*}
and so, see Figure~\ref{fig:expans-within-tube}
\begin{align}\label{eq:concl1}
d\big(x_{j+1},\cW^s_{y_{j+1}}(\Xi)\big) \ge 2\cdot
d\big(x_j,\cW^s_{y_j}(\Xi)\big).
\end{align}
Otherwise, if $x_j,y_j$ are not in the same smooth strip, then they
belong to adjoining smooth strips $S,S'$ of $R$, by the choices made
according to Remark~\ref{rmk:obsadj}, and $y_j$ belongs to
$B(x_j,K\delta_n)=B(x_j,K\delta_0)$ contained in the extended strip
$\wt{S}$ which is a smooth domain for $\wt{R}$. This prevents in
particular that the boundary between $S$ and $S'$ is a singular line,
since then $y_{j+1}$ and $x_{j+1}$ would be in distinct
cross-sections, which is impossible.
\begin{figure}[htpb]
  \centering
 \includegraphics[height=2cm]{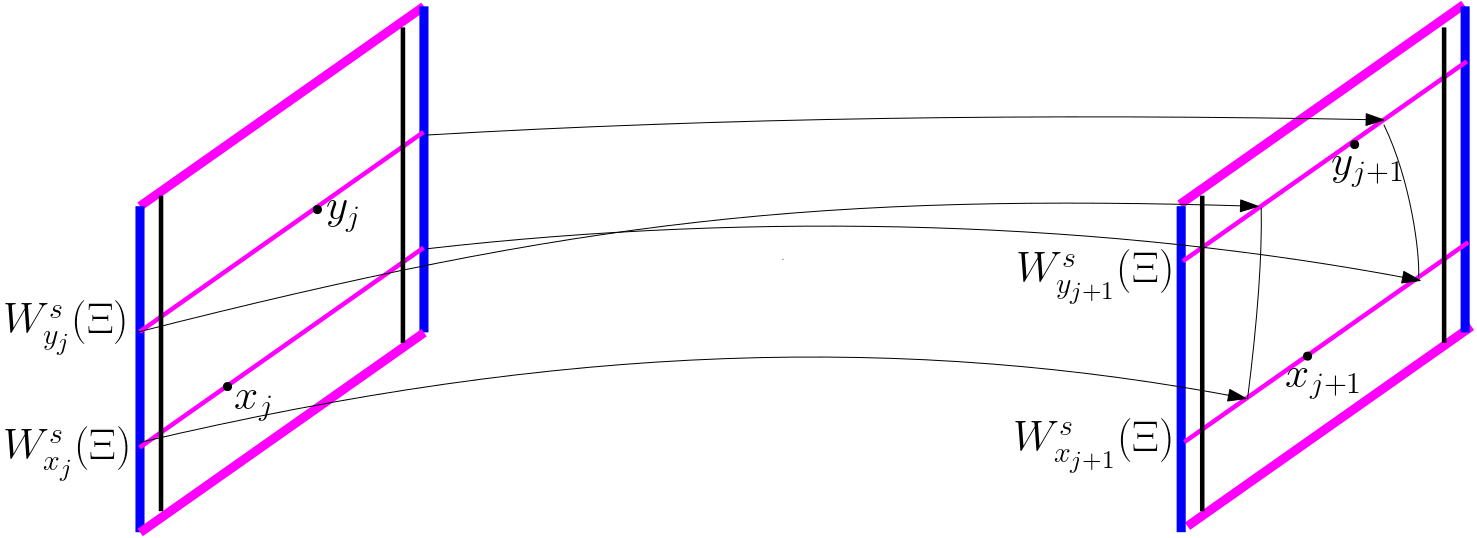}
 \caption{Expansion of distance between stable leaves.}
  \label{fig:expans-within-tube}
\end{figure}

We may now repeat the previous argument using the uniform expansion of
central-unstable curves to again conclude~\eqref{eq:concl1}.
Therefore, in both cases, we conclude by induction on $j\ge1$ that
\begin{align*}
  d\big(x_j,\cW^s_{y_j}(\Xi)\big)\ge 2^j \cdot
  d\big(x_0,\cW^s_{y_0}(\Xi)),
  \quad j\ge1.
\end{align*}
However, we have by assumption that $d\big(x_0,\cW^s_{y_0}(\Xi))>0$
and
\begin{align*}
d\big(x_j,\cW^s_{y_j}(\Xi)\big)\le d(x_j,y_j)\le\delta_0, \quad j\ge0.
\end{align*}
This yields a contradiction which proves that
$x_j\in \cW^s_{y_j}(\Xi)$ for some (and then, all) $j\ge0$.

\subsubsection{The higher codimensional case}
\label{sec:higher-codimens-case}
  
In the $d_{cu}>2$ case, let us assume again that $x_j$ and $y_j$ are
in the same strip $S_i$ of $R$ in some cross-section $\Sigma$.  Then
$\wh{x_j}=\pi(x_j)$ and $\hat{y_j}=\pi(y_j)$ are in the smooth domain
$\wh{S_i}=\pi(S_i)$ of $f$.  Let $S_j$ be the smooth domain where
$x_{j+1}$ lies and $\wh{S_j}=\pi(S_j)$ the corresponding domain of
$f$.

By the choices of constants according to Remark~\ref{rmk:obsadj}, we
get that $B=B(\wh{x_{j+1}},K\delta_n)$ is contained in the extended
strip $\wt{S_j}=\pi(\bar{S_j})$ and $B$ contains then line segment
$[\wh{x_{j+1}},\wh{y_{j+1}}]$.

Since $\wh{x_{j+1}}=f(\wh{x_j})$ and $\wh{y_{j+1}}=f(\wh{y_j})\in B$,
we can apply the Mean Value Theorem to $g=\big(f\mid_B\big)^{-1}$ and
use  Lemma~\ref{le:quotientexp} to get
\begin{align*}
  \|g\wh{x_{j+1}}-g\wh{y_{j+1}}\|_2
  \leq
  \sup_{z\in[\wh{x_{j+1}},\wh{y_{j+1}}]}\|Dg(z)\|
  \cdot \|\wh{x_{j+1}}-\wh{y_{j+1}}\|_2
  \le
  \mu^{-1}\|\wh{x_{j+1}}-\wh{y_{j+1}}\|_2,
\end{align*}
where $\|\cdot\|_2$ is the Euclidean norm. Hence
$\|f\wh{x_j}-f\wh{y_j}\|_2 \ge\mu\|\wh{x_j}-\wh{y_j}\|_2$.

In local coordinates (recall
Subsection~\ref{sec:poincare-quotient-ma}) $\cW^s_{x_j}(\Xi)$ and
$\cW^s_{y_j}(\Xi)$ correspond to $\{\wh{x_j}\}\times\D^{d_s}$ and
$\{\wh{y_j}\}\times\D^{d_s}$, respectively, for
$\wh{x_j},\wh{y_j}\in \D^{d_{cu}}$. We recall that
$f\circ\pi=\pi\circ R$ and using $d_e$ for the Euclidean distance, we
can write
$d_e\big(\cW^s_{x_j}(\Xi),\cW^s_{y_j}(\Sigma)\big) =
d_e(\wh{x_j},\wh{y_j}\big)$ in these local coordinates and also
\begin{align*}
  d\big(R\cW^s_{x_j}(\Xi),R\cW^s_{y_j}(\Xi)\big)
  &\geq
    d\big(\cW^s_{Rx_j}(\Xi), W^s_{Ry_j}(\Xi)\big)
    \ge
    L\cdot d_e(Rx_j,Ry_j)
  \\
  &=
   L \cdot d_e\big(f\wh{x_j},f\wh{y_j}\big)
  \geq
    L\mu\cdot d_e\big(\wh{x_j},\wh{y_j}\big)
  \\
  &\geq
    \frac{L}{K}\mu\cdot
    d\big(W^s_{x_j}(\Xi),W^s_{y_j}(\Xi)\big)
\end{align*}
where $\omega=L\mu/K=L/(K\lambda_1)>1$.

Finally, we analyze the setting where $x_j$ and $y_j$ are
not in the same smooth strip of $R$. As explained above, the
choice of $\delta_0=\delta_n$ ensures that $y_j$ belongs to
an adjoining strip to $x_j$.  By construction of the
cross-sections when $d_{cu}>2$, the intersection of local
stable manifolds of Lorenz-like singularities with $\Xi$ is
an isolated subset $\Xi\cap\Gamma_0$ in the interior of some
cross-sections. Hence
$y_j\in B=B(x_j,K\delta_0)\subset\wt{S}$ where $\wt{S}$ is
the extension of the smooth domain of $R$ containing $x_j$.

Therefore, $y_{j+1}\in B(x_{j+1},K\delta_0)\subset\wt{S}$ for some
extension of the smooth domain of $\Xi(a)$ containing $x_{j+1}$ and
$h=\big(\wt{R}\mid_B\big)^{-1}$ is well-defined. Moreover, we may now
consider the inverse of the corresponding quotient map
$g=\big(\wt{f}\mid_{\pi(B)}\big)^{-1}$ and apply the same argument as
before. We conclude by induction on $j\ge1$ that
\begin{align*}
  d(x_j,y_j)
  \ge
  d\big(R\cW^s_{x_j}(\Xi),R\cW^s_{y_j}(\Xi)\big)
  \ge
  \omega^j \cdot d\big(W^s_{x_0}(\Xi),W^s_{y_0}(\Xi)\big), \quad j\ge1
\end{align*}
and by assumption we again have
$d\big(W^s_{x_0}(\Xi),W^s_{y_0}(\Xi)\big)>0$ and
$d(x_j,y_j)\le\delta_0$ for all $j\ge0$.

This yields a contradiction and completes the proof of
Claim~\ref{yxsigma}.

\section{Sectional-hyperbolicity for a homogeneous attracting set}
\label{sec:singhypattracting}

Here we show that each homogeneous vector field in a trapping region
is necessarily sectional-hyperbolic.

\begin{theorem}
\label{thm:attracting-Lorenz-hyp-sing}
Let us assume that, for a $C^1$ neighborhood $\U$ of the
vector field $G$ in the space $\X^1(M^m)$ of $C^1$ vector
fields of a $d$-dimensional manifold, there exists an integer
$i\ge1$ so that $1<i+1<m$ and
\begin{description}
\item[(H1)] all periodic orbits in the trapping region $U$ are
    hyperbolic of saddle-type with index $i$; and
\item[(H2)]  the equilibria in $U$ are all generalized
  Lorenz-like with index $i$ or $i+1$.
\end{description}
Then the attracting set $\Lambda_G(U)=\bigcap_{t>0}\ov{\phi_t(U)}$ is
sectional-hyperbolic (where $\phi_t$ is the flow generated
by $G$).
\end{theorem}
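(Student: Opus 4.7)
The plan is to follow the Metzger--Morales strategy from \cite{MeMor06}, adapted to the present hypotheses. First I would note that (H1)--(H2), being assumed on the $C^{1}$ neighborhood $\U$, imply that $G$ is robustly star-like in $U$: every critical element of every $Y\in\U$ lying in $U$ is hyperbolic. Combined with the uniform index $i$ coming from (H1), I would invoke the Liao--Gan--Wen theory of star flows to produce a $D\phi_t$-invariant dominated splitting $E\oplus F$ over $\close{\per(G|_U)}$ with $\dim E=i$. Using Pugh's closing lemma and the fact that every point of $\Lambda_G(U)$ lies in its own omega-limit set (as $\Lambda_G(U)$ is an attracting set), I would extend this splitting continuously to all of $\Lambda_G(U)$, with the splitting at a periodic orbit $\cO$ in $\Lambda_G(U)$ matching the hyperbolic splitting of $\cO$.

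Second, I would check compatibility of this splitting at the equilibria using (H2). For a generalized Lorenz-like singularity $\sigma\in\sing(G)\cap\Lambda_G(U)$ of index $j\in\{i,i+1\}$, Proposition~\ref{prop:generaLorenzlike} shows that $DG(\sigma)|E^{cu}_\sigma$ carries a real eigenvalue $\lambda^s$ satisfying $-\lambda^u<\lambda^s<0<\lambda^u$. Domination then forces $E_\sigma$ to coincide with the span of the strongly contracting eigenvectors (of dimension exactly $i$ in both cases: for $j=i+1$ this is the stable eigenspace, while for $j=i$ it is the stable eigenspace with the weak direction $\lambda^s$ removed and absorbed into $F_\sigma$). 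Setting $E^s=E$ and $E^{cu}=F$ thus yields a candidate partially hyperbolic splitting with $d_s=i$ and $d_{cu}=d-i\ge 2$. Uniform contraction of $E^s$ then follows from the star condition: the stable bundles of periodic orbits of index $i$ contract at a uniform rate (by a Pliss-type argument applied to the star property), and this uniform contraction passes to the limit and hence to $\Lambda_G(U)$.

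The remaining, and main, obstacle is sectional expansion of $E^{cu}$. I would argue by contradiction: if \eqref{eq:sectional} fails on $F$, there exist $x_n\in\Lambda_G(U)$, times $t_n\nearrow\infty$, and $2$-planes $P_n\subset F_{x_n}$ with $|\det(D\phi_{t_n}|P_n)|$ subexponential. Applying a connecting/closing lemma (Hayashi's $C^1$ connecting lemma for flows, combined with Pugh's ergodic closing in the presence of singularities) one produces a nearby periodic orbit whose unstable part has poor area expansion; a Franks-type perturbation supported away from the Lorenz-like equilibria then promotes this into either a periodic sink or a saddle of index strictly greater than $i$ for some $Y\in\U$, contradicting robustness of (H1). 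The delicate point is carrying out the perturbation in the singular setting: one must stay clear of the equilibria, where the linearized flow is degenerate, and exploit the Lorenz-like structure from (H2) to ensure that orbits under consideration can be taken in a uniform compact neighborhood of regular points. Once sectional expansion is established, together with uniform contraction of $E^s$ and the hyperbolicity of the equilibria (automatic from (H2)), the definition of sectional-hyperbolic attracting set is satisfied, completing the proof.
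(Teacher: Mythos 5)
Your overall plan is the Metzger--Morales plan, which is also the paper's, but one load-bearing step is wrong. You assert that ``every point of $\Lambda_G(U)$ lies in its own omega-limit set (as $\Lambda_G(U)$ is an attracting set)'' and use this, via the closing lemma, to propagate the dominated splitting from $\overline{\per(G|_U)}$ to all of $\Lambda_G(U)$. This recurrence claim is false: an attracting set is only forward-maximal in its trapping region, not recurrent. For an attracting set one has $\omega(x)\subset\Lambda$ for every $x$ in the trapping region, but not $x\in\omega(x)$. Concretely, if $\sigma\in\sing(G)\cap\Lambda$ and $x\in (W^s(\sigma)\setminus\{\sigma\})\cap\Lambda$, then $\omega(x)=\{\sigma\}\not\ni x$; more broadly, the paper's own Figure~\ref{fig:singhypattracting} exhibits a non-transitive singular-hyperbolic attracting set, which necessarily contains wandering orbits. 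The closing lemma will not produce approximating periodic orbits at such wandering points, so your extension of the splitting to all of $\Lambda_G(U)$ breaks down exactly there.

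The paper avoids this trap by splitting the argument in two. First (Theorem~\ref{thm:attracting-hyp-sing}), it establishes sectional-hyperbolicity only on the non-wandering part $\Omega_\Lambda=\Omega(G)\cap\Lambda$: there the Pugh closing lemma does apply, the dominated splitting of period\-ic orbits passes to limits on $\hat\Omega_\Lambda$, and the critical elements are handled separately using (H1)--(H2) and the uniqueness of dominated splittings. Second (Proposition~\ref{pr:nonwander}), it upgrades from $\Omega_\Lambda$ to all of $\Lambda$ by invoking Arbieto's theorem: $\Omega_\Lambda$ has total probability (by Poincar\'e recurrence), and Arbieto's result shows that sectional hyperbolicity on a set of total probability inside a forward-invariant open set forces sectional hyperbolicity of the maximal invariant set. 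You need either this Arbieto step or some substitute (e.g.\ a direct argument that the dominated splitting and its estimates extend continuously from $\Omega_\Lambda$ to its saturate along orbits and then to the closure, using that every $\alpha$- and $\omega$-limit set in $\Lambda$ lies in $\Omega_\Lambda$). The rest of your outline --- robust star property, Liao/Gan--Wen domination at periodic orbits, compatibility of the splitting at Lorenz-like equilibria, a Pliss argument for uniform contraction, and a connecting-lemma/Franks-type contradiction to get sectional expansion --- is consistent with the paper's route (through \cite{MeMor06} and \cite[Sections 5.3--5.5]{AraPac2010}), so fixing this one gap would bring the proposal into line with the actual proof.
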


The strategy is to assume robust hyperbolicity of periodic
orbits in the trapping region and use the techniques in the
proof of the main result from Morales, Pacifico and Pujals
\cite{MPP04} extended to higher-dimensional manifolds in
\cite{MeMor08} (see also \cite[Chapter 5]{AraPac2010}) to
deduce that the non-wandering subset
$\Omega_{\Lambda}=\Lambda\cap\Omega(G)$ of the attracting
set $\Lambda$ is sectional-hyperbolic.

We first show, in Subsection \ref{sec:parhypnonwand}, that
from sectional-hyperbolicity for $\Omega_{\Lambda}$ we
deduce that $\Lambda$ is sectional-hyperbolic. Then, in
Subsection \ref{sec:singhypnonwand}, we explain how robust
hyperbolicity of periodic orbits suffices to obtain
sectional-hyperbolicity for $\Omega_{\Lambda}$.

\subsection{Singular-hyperbolicity from the non-wandering
  set}
\label{sec:parhypnonwand}

Here we show that, if $\Lambda$ is the maximal forward
invariant set of a trapping region $U$, then it is enough to
prove that $\Lambda\cap\Omega(G)$ is sectional-hyperbolic to
conclude that the attracting set $\Lambda$ is
sectional-hyperbolic: this is due to compactness of $\Lambda$
and the uniform bounds of partial hyperbolicity.

\begin{proposition}
\label{pr:nonwander}
Let $\Lambda$ be the maximal forward invariant set of a trapping
region $U$, that is, $\Lambda=\cap_{t>0}\ov{\phi_t(U)}$ for a
$C^1$ vector field $G$. If
$\Omega_{\Lambda}:=\Omega(G)\cap\Lambda$ is sectional-hyperbolic,
then $\Lambda$ is sectional-hyperbolic.
\end{proposition}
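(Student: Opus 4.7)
The plan is to extend the sectional-hyperbolic structure from $\Omega_\Lambda$ to all of $\Lambda$, exploiting that $\Lambda$ is compact and fully invariant ($\phi_t(\Lambda)=\Lambda$ for every $t\in\RR$, as is standard for attracting sets of a trapping region) and that, for every $x\in\Lambda$, both $\omega(x)$ and $\alpha(x)$ are contained in $\Omega(G)\cap\Lambda=\Omega_\Lambda$. In particular $\sing(G)\cap\Lambda\subset\Omega_\Lambda$, so the hyperbolicity of equilibria required for sectional hyperbolicity on $\Lambda$ is automatic from the hypothesis on $\Omega_\Lambda$.

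The first step is to extend the invariant cone fields to a neighborhood of $\Omega_\Lambda$ in the trapping region $U$. Sectional-hyperbolicity on the compact invariant set $\Omega_\Lambda$ yields, as in Proposition~\ref{prop:Ccu}, a forward-invariant center-unstable cone field $\C^{cu}(a)$ and a backward-invariant stable cone field $\C^{s}(a)$ defined on a small open neighborhood $V_0$ of $\Omega_\Lambda$ in $U$; this uses only continuity of the splitting and of $D\phi_{T_0}$ over compact sets. Setting $V:=\bigcup_{t\geq 0}\phi_t(V_0)\subset U$, the set $V$ is open and forward-invariant, and $\C^{cu}$ extends to $V$ by push-forward; analogously, $\C^{s}$ extends to an open backward-invariant neighborhood $W:=\bigcup_{t\geq 0}\phi_{-t}(V_0)\cap U$ of $\Omega_\Lambda$.

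The second step is to show $\Lambda\subset V\cap W$. For $x\in\Lambda$, $\omega(x)\subset\Omega_\Lambda\subset V$ and openness of $V$ give some $n(x)\in\NN$ with $\phi_n(x)\in V$ for all $n\geq n(x)$. By forward-invariance of $V$, the sets $\phi_{-n}(V)\cap\Lambda$ form an increasing open cover of $\Lambda$; compactness of $\Lambda$ provides $N$ with $\phi_N(\Lambda)\subset V$, and invariance of $\Lambda$ yields $\Lambda=\phi_N(\Lambda)\subset V$. The analogous argument using $\alpha$-limits and the backward-invariance of $W$ yields $\Lambda\subset W$. Consequently both cone fields are defined, continuous, and $D\phi_t$-invariant on $\Lambda$, and the standard nested-intersection construction
\[
E^{cu}_x:=\bigcap_{t\geq 0} D\phi_t(\phi_{-t}x)\cdot \C^{cu}_{\phi_{-t}x}(a),
\qquad
E^{s}_x:=\bigcap_{t\geq 0} D\phi_{-t}(\phi_{t}x)\cdot \C^{s}_{\phi_{t}x}(a),
\]
produces a continuous $D\phi_t$-invariant splitting $T_\Lambda M=E^s\oplus E^{cu}$ which, by uniqueness of dominated splittings, agrees with the original one on $\Omega_\Lambda$.

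It remains to transfer the uniform contraction, domination, and sectional-expansion constants. Since $\Omega_\Lambda$ is compact and these hold there with constants $C,\lambda,K,\theta$, continuity of $D\phi_1$ and of the splitting (on the Grassmannian bundle over $U$) provides, after shrinking $V_0$ and slightly weakening the constants, the same asymptotic inequalities at every point of a neighborhood; because $\Lambda\subset V$, the forward orbit of any $x\in\Lambda$ lies entirely in this neighborhood, and the bounded-time contribution is absorbed into multiplicative constants using compactness of $\Lambda$ and continuity of $D\phi_t$ on finite intervals. The main technical obstacle is to verify sectional expansion along \emph{every} $2$-plane of the extended $E^{cu}_x$ for $x\in\Lambda\setminus\Omega_\Lambda$, not merely along $2$-planes that are limits of such planes over $\Omega_\Lambda$; this is handled by uniform continuity of the area-determinant on the compact Grassmannian of $2$-planes inside $E^{cu}$ over $\Lambda$, combined with the openness of the sectional-expansion condition and the cone-field estimate of Proposition~\ref{prop:Ccu}(2) applied to the extended bundle.
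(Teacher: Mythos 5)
Your argument is a direct cone-field extension, which is a genuinely different strategy from the paper's. The paper's proof of Proposition~\ref{pr:nonwander} is a short reduction to Arbieto's theorem on sectional Lyapunov exponents \cite{arbieto2010}: it observes that $\Omega_\Lambda$ has total probability in $U$ (by Poincar\'e recurrence, every invariant probability measure gives full mass to the non-wandering set), and then invokes the cited result, which upgrades sectional-hyperbolic estimates holding on a total-probability subset to uniform estimates on the whole maximal invariant set.

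The first two steps of your construction are sound: the local cone fields on a neighborhood $V_0$ of $\Omega_\Lambda$, the forward/backward saturations $V$ and $W$, and the compactness argument $\Lambda=\phi_N(\Lambda)\subset V$ (and similarly for $W$) all go through, using $\omega(x)\cup\alpha(x)\subset\Omega_\Lambda$ for every $x\in\Lambda$.

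The gap is in the final paragraph, where you transfer the uniform constants. You write that ``the bounded-time contribution is absorbed into multiplicative constants using compactness of $\Lambda$ and continuity of $D\phi_t$ on finite intervals.'' This implicitly assumes that the time any orbit of $\Lambda$ spends outside $V_0$ is \emph{uniformly} bounded. That is false in general: a wandering point $z\in\Lambda\setminus V_0$ at which $\|G(z)\|$ is small but nonzero produces nearby orbits that linger near $z$ for arbitrarily long times before entering $V_0$ permanently, even though each individual orbit eventually does. In such a situation the ``finite-time correction factor'' $\sup_x\|D\phi_{T_x}\|$ is unbounded, and the uniform contraction $\|D\phi_t|E^s_x\|\le C\lambda^t$, the domination constant, and the sectional-expansion constant $K$ cannot be extracted the way you describe. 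The push-forward cone field on $V$ inherits invariance, but not uniform strict contraction of cone apertures, for the same reason: the orbit segment connecting a point of $\Lambda$ to $V_0$ has no uniform length bound. This is precisely the obstruction that Arbieto's theorem (or, alternatively, a Pliss-lemma / ergodic-closing type argument over the space of invariant measures) is designed to bypass, and why the paper reduces to it rather than extending cone fields directly. To repair your proof you would need to replace the bounded-time claim with such a measure-theoretic or Pliss-type argument; as written, the estimate transfer does not follow.
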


\begin{proof}
  This follows almost immediately from the main theorem from Arbieto
  \cite{arbieto2010}.  Indeed, the subset $\Omega_{\Lambda}$ has total
  probability, since the non-wandering set contains the set of
  recurrent points and this set has full measure with respect to any
  invariant probability measure, by the Poincar\'e Recurrence
  Theorem. Hence, the assumptions of the Proposition ensure that, on
  the forward invariant open set $U$, there exists a subset of total
  probability which is sectional-hyperbolic (since $\Omega_{\Lambda}$
  is assumed to be sectional-hyperbolic). Thus, according to
  \cite{arbieto2010}, the maximal invariant subset of $U$ is
  sectional-hyperbolic. This maximal invariant subset is precisely the
  attracting set $\Lambda$.
\end{proof}

\subsection{Sectional-hyperbolicity of the non-wandering set
  from robust periodic hyperbolicity}
\label{sec:singhypnonwand}

Here we explain how we can obtain sectional-hyperbolicity for
the subset $\Omega_{\Lambda}$ from the assumption that
periodic orbits are $C^1$ robustly hyperbolic. The following
theorem together with Proposition~\ref{pr:nonwander} directly
imply Theorem~\ref{thm:attracting-Lorenz-hyp-sing}.

\begin{theorem}
\label{thm:attracting-hyp-sing}
Let us assume that for a $C^1$ neighborhood $\U$ of $G$ in
the space of $C^1$ vector fields the assumptions $(H1)$ and $(H2)$
in the statement of
Theorem~\ref{thm:attracting-Lorenz-hyp-sing} are valid.
Then the non-wandering part of the attracting set
$\Omega_{\Lambda}(G)$ is sectional-hyperbolic.
\end{theorem}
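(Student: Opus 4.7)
The plan is to follow the Morales--Pacifico--Pujals scheme as adapted to higher dimensions by Metzger--Morales and as exposed in Chapter~5 of Araujo--Pacifico, treating the index-$i$ homogeneity on periodic orbits together with the generalized Lorenz-like structure of the equilibria as the input for the usual chain of Liao/Pliss/Franks arguments.

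\textbf{Step 1 (Dominated splitting on periodic orbits).} First I would produce, from assumption (H1), a dominated splitting $E\oplus F$ of index $i$ over the closure of $\per(G)\cap U$. The point is that hyperbolicity of every periodic orbit in $U$ is $C^1$-robust with a uniform index, so a Liao/Ma\~n\'e/Wen-type argument (as in \cite[Chapter~5]{AraPac2010}) yields uniform domination constants. I would then extend $E\oplus F$ continuously to the closure $\close{\per(G)\cap U}$, and, using the ergodic closing lemma (or Pugh's closing lemma in the $C^1$ generic setting) together with the trapping region $U$, to every non-equilibrium point of $\Omega_\Lambda$. Dominated splittings are preserved under limits, so the resulting splitting over $\Omega_\Lambda\setminus\sing(G)$ is again continuous and $D\phi_t$-invariant.

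\textbf{Step 2 (Matching at the singularities).} Hypothesis (H2) gives that every $\sigma\in\sing(G)\cap U$ is generalized Lorenz-like with index $i$ or $i+1$, so $DG(\sigma)$ has a dominated splitting $E^{ss}_\sigma\oplus E^{cu}_\sigma$ of dimensions $i$ and $d-i$, obtained by joining the strong stable eigenspace of dimension $i$ with the remaining directions (which contain the real weak stable eigenvalue $\lambda^s$ when the index is $i+1$). I would then verify, via the standard compatibility lemma for dominated splittings at hyperbolic equilibria (see \cite[Lemma~5.7]{AraPac2010} or the analogous statement in \cite{MeMor06}), that the previously obtained splitting extends continuously through each such $\sigma$ with the same dimensions, yielding a dominated splitting $E\oplus F$ of index $i$ over all of $\Omega_\Lambda$.

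\textbf{Step 3 (Contraction of $E$ and sectional expansion of $F$).} The hard part is upgrading domination to partial hyperbolicity plus sectional expansion, and this is where the full force of robust index-$i$ hyperbolicity is used. For the contraction of $E$: if $E$ were not uniformly contracting, Pliss' lemma plus the ergodic closing lemma would produce, after a $C^1$ perturbation supplied by Franks' lemma for flows, a periodic orbit in $U$ with an eigenvalue of modulus $\ge1$ in the $E$-direction, contradicting either (H1) or the robustness part (it would either fail to be hyperbolic, or would change its index). For the sectional expansion of $F$: if the area along some $2$-plane in $F$ were not exponentially expanded along the orbit of some $x\in\Omega_\Lambda$, a parallel Pliss/Franks argument would furnish a nearby vector field with a periodic orbit whose derivative along $F$ contracts a $2$-plane, and the resulting eigenvalue configuration either gives a non-saddle periodic orbit (violating (H1)) or a saddle with index $>i$ (violating homogeneity). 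This is precisely the scheme used in \cite{MPP04,MeMor06}, so I would invoke those perturbation lemmas directly rather than re-derive them.

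\textbf{Main obstacle.} The delicate point, as in the $3$-dimensional case, is the interaction between the perturbation arguments and the presence of equilibria: Franks' lemma must be applied along pseudo-orbits that may accumulate $\sing(G)\cap U$, and one must ensure that the resulting periodic orbits still lie in the trapping region $U$ (so that (H1) applies) and that the Lorenz-like structure given by (H2) is preserved under the perturbation. I would handle this in the same way as \cite[Chapter~5]{AraPac2010}: localize perturbations away from small neighborhoods of the singularities where the linearization already gives the desired contraction/expansion estimates, and use the forward invariance of $U$ to keep every periodic orbit created by the perturbation inside $U$. Combining Steps~1--3 with Proposition~\ref{pr:nonwander} then yields Theorem~\ref{thm:attracting-hyp-sing}, and hence Theorem~\ref{thm:attracting-Lorenz-hyp-sing}.
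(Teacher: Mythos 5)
Your proposal follows essentially the same scheme as the paper: robust index-$i$ hyperbolicity of periodic orbits yields a uniformly dominated splitting over $\per(Z)\cap\Lambda_Z(U)$ for all nearby $Z$ (the paper packages this as Theorems~\ref{dp}, \ref{thm:pp}, \ref{thm:p1} quoted from \cite{AraPac2010}), which is then extended to $\Omega_\Lambda(G)$ by the Closing Lemma, matched with the eigenspace decomposition at the generalized Lorenz-like singularities using (H2), and finally upgraded to uniform $E^s$-contraction and $E^{cu}$-expansion by the same Pliss/Franks-style contradiction with the homogeneity hypothesis (H1). One point worth flagging: the paper's own sketch concludes with \emph{volume} expansion of $E^{cu}$ (singular-hyperbolicity), whereas you correctly aim for \emph{sectional} expansion, which is what the theorem actually asserts and what is needed when $d_{cu}>2$, so your Step~3 is the more careful formulation.
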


\begin{proof}
  We start by noting that, under the assumptions of
  Theorem~\ref{thm:attracting-hyp-sing}, for each periodic
  point $p\in\per(Z)$ for $Z\in\U$, we have that the tangent
  bundle of $M$ over $p$ can be written as
  $T_pM = E_p^s \oplus (\RR\cdot Z(p)) \oplus E_p^u$. Here
  ${E}_p^s$ is the eigenspace associated to the contracting
  eigenvalue of $D\psi_{t_p}(p)$; ${E}_p^u$ is the
  eigenspace associated to the expanding eigenvalue of
  $D\psi_{t_p}(p)$; and we write $t_p$ for the (minimal)
  period of $p$. Moreover, the dimensions of these
  subbundles are fixed, $d_s=\dim E^s_p=i$ and
  $d_{cu}-1=\dim E^u_p=m-i-1$, independently of $p$ and $Z$.

  We note that the possible presence of equilibria in
  $\ov{\per(Z)\cap\Lambda_Z(U)}$ is an obstruction for the
  extension of the stable and unstable bundles ${E}^s$ and
  ${E}^u$ to $\ov{\per(Z)\cap\Lambda_Z(U)}$.  Indeed, near a
  singularity, the angle between either ${E}^u$ and $Z$, or
  ${E}^s$ and $Z$, might be arbitrarily small.  To bypass
  this difficulty, we introduce the following notion.

    Given $Z\in \U$ define for any $p \in
    \per(Z)\cap\Lambda_Z(U)$ the splitting
    \begin{equation*}
      \label{s11}
      T_pM = E^{s,Z}_p \oplus  E^{cu,Z}_p,
      \quad
      \text{where}\quad
      E^{cu,Z}_p:=(\RR\cdot Z(p)) \oplus {E}_p^u.
    \end{equation*}
    Moreover we define a splitting over $\per(Z)
    \cap\Lambda_Z(U)$ by
    \begin{align*}
      T_{\per(Z)\cap\Lambda_Z(U)}M=\bigcup_{p\in
        \per(Z)\cap\Lambda_Z(U)} (E^{s,Z}_p \oplus
      E^{cu,Z}_p).
    \end{align*}
    In addition, we define the subspace
    $E^{cs,Z}_p:=E^{s,Z}_p\oplus (\RR\cdot Z(p))$ of the
    tangent space at $p\in\per(Z)\cap\Lambda_Z(U)$ which
    gives another bundle over $\per(Z)\cap\Lambda_Z(U)$.
  We denote the restriction of $D\psi_t(p)$ to $E^{s,Z}_p$
  (respectively $E^{cu,Z}_p$) by $D\phi_t\mid E^s_p$
  (respectively $D\psi_t\mid E^{cu}_p$) for $t\in\RR$ and
  $p\in \per(Z)\cap\Lambda_Z(U)$.

  We now prove that the splitting $E^{s,Z}_p \oplus E^{cu,Z}_p$ over
  $\per(Z)\cap\Lambda_Z(U)$ defined above is a
  $D\psi_t$-invariant and uniformly dominated splitting along periodic
  points with large period.

  \begin{theorem}{\cite[Theorem 5.37, Section
      5.4.1]{AraPac2010}}
    \label{dp}
    Given $G\in{\U}$, there are a neighborhood ${\V}\subset
    {\U}$ and constants $0<\lambda <1$, $c >0$, and $T_0 >0$
    such that, for every $Z \in {\V}$, if $p \in \per(Z)
    \cap\Lambda_Z(U)$, $t_p>T_0$ and $T>0$, then $ \|D\psi_T
    \mid {E}^s_p\| \cdot \|D\psi_{-T} \mid {E}^{cu}_{\psi_T(p)}\|
    < c\cdot \lambda^T.  $
  \end{theorem}

  The proof of this theorem is based on the following pair
  of results.

  Theorem~\ref{thm:pp} establishes, first, that the periodic
  points are uniformly hyperbolic, i.e., the periodic points
  are of saddle-type and the Lyapunov exponents are
  uniformly bounded away from zero.  Secondly, the angle
  between the stable and the unstable eigenspaces at
  periodic points are uniformly bounded away from zero.

  \begin{theorem}{\cite[Theorem 5.38, Section
      5.4]{AraPac2010}}
    \label{thm:pp}
    Given $G\in \U$, there are a neighborhood $\V\subset U$
    of $G$ and constants $0<\lambda < 1$ and $c > 0$, such
    that for every $Z\in\V$, if $p \in
    \per(Z)\cap\Lambda_Z(U)$ and $t_p$ is the period of $p$
    then
    \begin{enumerate}
    \item
      \begin{enumerate}
      \item $\quad\|D{\psi_{t_p}} \mid
        {E}^s_p\|<{\lambda}^{t_p}$ (uniform contraction on
        the period)
      \item $\quad\|D\psi_{-t_p}\mid {E}^u_p\|<{\lambda}^{t_p}$
        (uniform expansion on the period)\,.
      \end{enumerate}
    \item $\angle{E^{cs}_p,E^{cu}_p} > c$ (angle uniformly
      bounded away from zero between center-stable and
      center-unstable directions).
    \end{enumerate}
  \end{theorem}

  Theorem~\ref{thm:p1} is a strong version of item 2 of
  Theorem~\ref{thm:pp}.  It establishes that, at periodic
  points, the angle between the stable and the central
  unstable bundles is uniformly bounded away from zero.

  \begin{theorem}{\cite[Theorem 5.38, Section
      5.4]{AraPac2010}}
    \label{thm:p1}
    Given $G\in \U$ there are a neighborhood $\V\subset\U$
    of $G$ and a positive constant $C$ such that for every
    $Z\in\V$ and $p \in \per(Z)\cap\Lambda_Z(U)$ we have
    angles uniformly bounded away from zero:
    $\angle{E^{s}_p, E^{cu}_p} > C$.
  \end{theorem}

  We can assume without loss of generality that all the
  stated properties in previous results hold uniformly for
  all elements of $\per(Z)$ and $Z\in\U$ since, for each
  fixed $T>0$, hyperbolic periodic orbits with period at
  most $T$ are isolated and thus finitely many by relative
  compactness of $U$.

  The arguments of the proofs are as follows; see also
  \cite[Section 3]{MeMor08} and \cite[Section 5.4 and Remark
  5.35]{AraPac2010}.
  \begin{itemize}
  \item If Theorem \ref{dp} fails, then we can create a
    periodic point for a nearby flow with the angle between
    the stable and the central unstable bundles arbitrarily
    small. This yields a contradiction to Theorem
    \ref{thm:p1}.  In proving the existence of such a
    periodic point for a nearby flow we use Theorem
    \ref{thm:pp}. The arguments are presented in detail in
    \cite[Section 5.4.3]{AraPac2010}.
  \item Assuming Theorem \ref{dp}, we establish the extension of the
    splitting $E^{s,Z}_p \oplus E^{cu,Z}_p$ over
    $\per(Z)\cap\Lambda_Z(U)$ to a uniformly dominated splitting
    defined over all of $\Omega_{\Lambda}(G)$. This will be explained
    in the following subsection \ref{sec:domsplitnonwand}
  \item Afterwards, with the help of Theorem \ref{thm:pp},
    we can show that $E^s$ is uniformly contracting and
    that $E^{cu}$ is volume expanding. \emph{Hence}
    $\Omega_{\Lambda}(G)$ \emph{is a singular-hyperbolic
      set, as claimed} in the statement of Theorem
    \ref{thm:attracting-hyp-sing}. This can be done
    precisely as detailed in \cite[Section 5.3]{AraPac2010}:
    we show that the opposite assumption leads to the
    creation of periodic points for flows near to the
    original one with arbitrarily small contraction
    (respectively expansion) along the stable (respectively
    unstable) bundle, contradicting the first part of
    Theorem \ref{thm:pp}.
  \end{itemize}

  Finally, the proof of Theorem \ref{thm:pp} is presented in
  \cite[Sections 5.4.4 through 5.5.5]{AraPac2010} using the assumption
  all periodic orbits in $U$ are hyperbolic of saddle-type and all
  equilibria in $U$ are Lorenz-like, for all $Z\in\U$.  All of these
  facts together complete the proof of Theorem
  \ref{thm:attracting-hyp-sing}.
\end{proof}

  \subsubsection{Dominated splitting over the non-wandering
    part of the attracting set}
  \label{sec:domsplitnonwand}

  Here we induce a dominated splitting over $\Omega_{\Lambda}(Z)$
  using the dominated splitting $E^{s,Z}_p \oplus E^{cu,Z}_p$ over
  $\per(Z)\cap\Lambda_Z(U)$ over for flows near $G$, defined before on
  periodic orbits.

  On the one hand, since $\Lambda_Z(U)$ is an attracting
  set for every vector field $Z$ which is sufficiently
  $C^1$ close to $G$ we can assume, without loss of
  generality, that for all $Z \in \V$ and $p \in
  \per(Z)$ with ${\cO}_Z(p)\cap U \neq \emptyset$, we
  have ${\cO}_Z(p)\subset
  \Lambda_Z(U)\cap\Omega(Z)=\Omega_{\Lambda}(Z)$.

  On the other hand, every point of
  $\hat\Omega_{\Lambda}(G):=
  \Omega_{\Lambda}(G)\setminus(\per(G)\cup\sing(G))$
  is approximated by a periodic orbit of a $C^1$ nearby
  flow, by the Closing Lemma; see e.g. Pugh \cite{Pugh67} or
  Arnaud \cite{MCA98} for a more recent exposition.

  In addition, the remaining set
  $\Omega_{\Lambda}(G)\setminus\hat\Omega_{\Lambda}(G)$ is formed
  either by periodic points in $U$, which we assume are hyperbolic of
  saddle-type with index $i$ , or by equilibria, which we assume are
  Lorenz-like with index $i$ or $i+1$.  Hence all points of
  $\Omega_{\Lambda}(G)$ are either critical elements of $G$ or
  approximated by periodic orbits.

  More precisely, given $Z \in \V$, let $K(Z)
  \subset\hat\Omega_{\Lambda}(Z)$ be such that $\psi_t(x)
  \notin K(Z)$ for all $x\in K(Z)$ if $t \neq 0$.  In
  other words, $K(Z)$ is a set of representatives of the
  quotient $\hat\Omega_{\Lambda}(Z)/ \sim$ by the
  equivalence relation $x \sim y \iff x \in \cO_Z(y)$.
  From this, to induce an invariant splitting over
  $\Omega_{\Lambda}(G)$ it is enough to do it over
  $\hat\Omega_{\Lambda}(G)$. For this we proceed as
  follows.

  Since $\hat\Omega_{\Lambda}(G)\subset \Omega(G)$, then we
  can use the Closing Lemma: for any $x \in K(G)$ there
  exist
  \begin{itemize}
  \item a sequence $Z_n$ of vector fields in $M$ such
    that $Z_n \to G$ in the $C^1$ topology of
    vector fields; and
  \item $z_n \to x$ such that $z_n \in
    \per(Z_n)$.
  \end{itemize}
  We can assume without loss of generality that $Z_n \in
  \V$ for all $n$.  In particular $\cO_{Z_n}(y_n)\subset
  \Lambda_{Z_n}(U)\cap\Omega(Z)=\Omega_{\Lambda}(Z)$.
  Moreover, since $x\in K(G)$ is not periodic, we can also
  assume that the periods of $z_n$ are $t_{z_n} > T_0$
  for all $n$. Hence these periodic orbits admit a uniform
  dominated splitting whose features can be passed to the
  orbits of $\hat\Omega_{\Lambda}$ in the limit.

  More precisely, let us take a converging subsequence
  $E^{s,Z^{n_k}}_{z_{n_k}} \oplus E^{cu,Z^{n_k}}_{z_{n_k}}$
  and define
  $ E^{s,G}_x = \lim_{k
    \rightarrow\infty}E^{s,Z^{n_k}}_{z_{n_k}}$ and
  $E^{cu,G}_x= \lim_{k \rightarrow
    \infty}E^{cu,Z^{n_k}}_{z_{n_k}}.$ Since
  $E^{s,{Z_n}} \oplus E^{cu,Z_n}$ is a $(c,\lambda)$
  dominated splitting for all $n$, then this property is
  also true for the limit $E^{s,G}_x \oplus E^{cu,G}_x$.
  Moreover $\dim( E^{s,G}_x)=d_s$ and
  $\dim(E^{cu,G}_x)=d_{cu}$ for all $x\in\Lambda_G(U)$.

  Finally define $E^{s,G}_{\phi_t(x)} := D\phi_t(E^{s,G}_x)$ and
  $E^{cu,G}_{\phi_t(x)} := D\phi_t(E^{cu,G}_x)$ along $\phi_t(x)$ for
  $t\in\RR$. Since for every $n$ the splitting over
  $\{p\in\per(Z_n)\cap\Lambda_{Z_n}(U): t_p\ge T_0\}$ is $(c,\lambda)$
  dominated, it follows that the splitting defined above along $G$
  orbits of points in $K(G)$ is also $(c,\lambda)$-dominated.
  Moreover, we also have that $\dim E^{s,G}_{\phi_t(x)}=d_s$ and
  $\dim E^{cu,G}_{\phi_t(x)}=d_{cu}$ for all $t \in \RR$.

  This provides the desired extension of a dominated
  splitting to $\hat\Omega_{\Lambda}(G)$ and also to
  $\Omega_{\Lambda}(G)$, since the critical elements of
  $G$ in $U$ are
  \begin{itemize}
  \item either a periodic orbit with index $i$ or a
    generalized Lorenz-like singularity with the index $i+1$, in which
    case it already has a compatible dominated splitting;
  \item or a generalized Lorenz-like singularity with the same index
    $i$ of the periodic orbits, in which case this singularity is not
    in the $\omega$-limit set of any point of the attracting set.
  \end{itemize}

  The latter case above is treated in
  Proposition~\ref{prop:generaLorenzlike}.

  We denote by $E^s\oplus E^{cu}$ the splitting over
  $\Omega_{\Lambda}(G)$ obtained in this way. Note that if
  $\sigma \in\sing(G)\cap\Omega_{\Lambda}(G)$ has index $d_s+1$, then
  $E^s_\sigma$ is the direct sum of the eigenspaces $E^{ss}_\sigma$
  associated to the strongest contracting eigenvalues of
  $DG_{\sigma}$, and $E^{cu}_\sigma$ is $d_{cu}$-dimensional
  eigenspace associated to the remaining eigenvalues of $DG_{\sigma}$.
  This follows from the uniqueness of dominated splittings;
  see~\cite{Do87,Man88}.

  Since this splitting is uniformly dominated, we deduce
  that $E^s\oplus E^{cu}$ depends continuously on the points
  of $\Omega_{\Lambda}(G)$ and also on the vector field $G$
  in $\U$.

\subsection{Robust expansive attractors and sectional-hyperbolicity}
\label{sec:robust-expans-attrac}

Here we prove Corollary~\ref{mcor:starattractsechyp}. For that we need
to recall some results from~\cite{SGW14} on chain recurrent classes of
star vector fields.

Let $\phi_t$ be the flow generated by the vector field $G$. For any
$\varepsilon>0, T>0$, a finite sequence $\{x_i\}_{i=0}^n$ of points in
the ambient space is an \emph{$(\varepsilon,T)$-chain} of $G$ if there
are $t_i\ge T$ such that $d(\phi_{t_i}(x_i),x_{i+1})<\varepsilon$ for
all $0\le i\le n-1$.

A point $y$ is \emph{chain attainable} from $x$ if there exists $T>0$
such that for any $\varepsilon>0$, there is an $(\varepsilon,T)$-chain
$\{x_i\}_{i=0}^n$ with $x_0=x$ and $x_n=y$.  If $x$ is chain
attainable from itself, then $x$ is a \emph{chain recurrent
  point}. The set of chain recurrent points is the \emph{chain
  recurrent set} of $G$, denoted by ${\rm CR}(G)$.  Chain
attainability is a closed equivalence relation on ${\rm CR}(G)$.

For each $x\in{\rm CR}(G)$, the
equivalence class $C(x)$ (which
is compact) containing $x$ is
the \emph{chain recurrent class
  of $x$.}  A chain recurrent
class is {\it trivial} if it
consists of a single critical
element. Otherwise it is {\it
  nontrivial}.

Since every hyperbolic critical element $c$ of $G$ has a well-defined
continuation $c_Y$ for $Y$ close to $G$, the chain recurrent class $C(c)$ also
has a well-defined continuation $C(c_Y, Y)$.

A compact invariant set $\Lambda$ is called {\it chain transitive} if for every
pair of points $x,y\in \Lambda$, $y$ is chain attainable from $x$, where all
chains are chosen in $\Lambda$. Thus a chain recurrent class is just a maximal
chain transitive set, and every chain transitive set is contained in a unique
chain recurrent class.

Given $\sigma\in\sing(G)$ such that $C(\sigma)$ is non-trivial and $G$
is a star vector field, then we define the saddle-value of $\sigma$ as
\begin{align*}
  \sv(\sigma)=\lambda_s+\lambda_{s+1}
\end{align*}
where the Lyapunov exponents of $\phi_t\sigma$ are
$
\lambda_1\leq\cdots\leq\lambda_s<0<\lambda_{s+1}\leq\cdots\leq\lambda_m.
$ According to \cite[Lemma 4.2]{SGW14} if $C(\sigma)$ is nontrivial
for a star vector field, then $\sv(\sigma)\neq0$. We can now define
the \emph{periodic index} $\indx_p(\sigma)$ of $\sigma$ as
\begin{align*}
  \indx_p(\sigma)
  =
  \begin{cases}
    s & \text{if  } \sv(\sigma)<0
    \\
    s-1 & \text{if  } \sv(\sigma)>0.
  \end{cases}
\end{align*}
For a periodic point $q$, we define $\indx_p(q)=\indx(q)=\dim E^s_q$, which is
well-defined since the critical element $\gamma=\cO_G(q)$ must be hyperbolic for
a star flow.

We say $\sigma$ is {\em Lorenz-like}, if $\sv(\sigma)\neq0$ and
\begin{description}
\item[If $\sv(\sigma)>0$] then $\lambda_{s-1}<\lambda_s$, and
  $W^{ss}(\sigma)\cap C(\sigma)=\{\sigma\}$. Here $W^{ss}(\sigma)$ is the
  invariant manifold corresponding to the bundle $E^{ss}_\sigma$ of the
  partially hyperbolic splitting $T_\sigma M=E^{ss}_\sigma\oplus E^{cu}_\sigma$,
  where $E^{ss}_\sigma$ is the invariant space corresponding to the Lyapunov
  exponents $\lambda_1, \lambda_2, \cdots, \lambda_{s-1}$ and $E^{cu}_\sigma$
  corresponding to the Lyapunov exponents
  $\lambda_s, \lambda_{s+1}, \cdots, \lambda_{d}$.

\item[If $\sv(\sigma)<0$] then $\lambda_{s+1}<\lambda_{s+2}$, and
  $W^{uu}(\sigma)\cap C(\sigma)=\{\sigma\}$. Here $W^{uu}(\sigma)$ is the
  invariant manifold corresponding to the bundle $E^{uu}_\sigma$ of the
  partially hyperbolic splitting $T_\sigma M=E^{cs}_\sigma\oplus E^{uu}_\sigma$,
  where $E^{cs}_\sigma$ is the invariant space corresponding to the Lyapunov
  exponents $\lambda_1, \lambda_2, \cdots, \lambda_{s+1}$ and $E^{uu}_\sigma$
  corresponding to the Lyapunov exponents
  $\lambda_{s+2}, \lambda_{s+3}, \cdots, \lambda_{d}$.
\end{description}

The following shows that for star vector fields singularities in
nontrivial chain recurrent classes are Lorenz-like.

\begin{theorem}\label{thm:chainrecurrent1}
  For any $G\in\X^*(M)$ and $\sigma\in {\sing}(G)$, if the chain recurrent class
  $C(\sigma)$ is non-trivial, then any $\rho\in C(\sigma)$ is Lorenz-like.

  Moreover, there is a dense $\G_\delta$ subset $\G_1\subset\X^*(M)$
  such that, if we further assume that $G\in\G_1$, then all
  singularities in $C(\sigma)$ have the same periodic index
  $\indx_p(\rho)=\indx_p(\sigma)$.
\end{theorem}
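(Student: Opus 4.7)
The plan is to follow the strategy of Shi, Gan and Wen~\cite{SGW14}, combining the star condition with Hayashi/Liao-type connecting arguments and the dominated splitting machinery already developed in Subsection~\ref{sec:domsplitnonwand}. First I would show that $C(\sigma)$ carries a continuous $D\phi_t$-invariant dominated splitting with fixed dimensions. Since $G\in\X^*(M)$, there is a $C^1$-neighborhood $\U$ of $G$ in which every critical element of every $Y\in\U$ is hyperbolic, and Hayashi's connecting lemma (plus its refinement by Bonatti--Crovisier for chain recurrent classes) implies that $C(\sigma)$ is accumulated by periodic orbits of vector fields $Y\in\U$ passing arbitrarily close to $\sigma$. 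For these periodic orbits Theorems~\ref{dp}, \ref{thm:pp} and~\ref{thm:p1} provide a uniform dominated splitting with constants depending only on $\U$; taking limits along convergent subsequences, exactly as in Subsection~\ref{sec:domsplitnonwand}, yields the desired splitting on $C(\sigma)\setminus\sing(G)$, which extends to any $\rho\in C(\sigma)\cap\sing(G)$ by uniqueness of dominated splittings~\cite{Do87,Man88}.

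Next I would argue that $\sv(\rho)\neq 0$ for every $\rho\in C(\sigma)\cap\sing(G)$: if the saddle value vanished, a two-step $C^1$-perturbation (one supported near $\rho$ and one along an orbit chain-approaching $\rho$, which exists because $C(\rho)=C(\sigma)$ is non-trivial) would produce a non-hyperbolic periodic orbit of some $Y\in\X^1(M)$ arbitrarily $C^1$-close to $G$, contradicting $G\in\X^*(M)$. The two cases $\sv(\rho)>0$ and $\sv(\rho)<0$ are symmetric under time reversal, so I focus on $\sv(\rho)>0$, where the splitting refines at $\rho$ to $T_\rho M=E^{ss}_\rho\oplus E^{cu}_\rho$ with $\dim E^{ss}_\rho$ equal to the periodic index.

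The technical heart of the first assertion is then the strong-stable trivial intersection $W^{ss}(\rho)\cap C(\sigma)=\{\rho\}$. Suppose for contradiction that $p\in(W^{ss}(\rho)\setminus\{\rho\})\cap C(\sigma)$. Flow-invariance of $W^{ss}(\rho)$ forces $G(p)\in T_p W^{ss}(\rho)=E^{ss}_p$; on the other hand, an argument in the spirit of~\cite[Lemma~6.1]{AraPac2010} shows the flow direction at each point of $C(\sigma)$ must lie inside $E^{cu}$. Since $E^{ss}_p\cap E^{cu}_p=\{0\}$ one concludes $G(p)=0$, contradicting the fact that $W^{ss}(\rho)$ is a smooth embedded disk meeting $\sing(G)$ only at $\rho$. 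The main obstacle here is justifying that the splitting extracted from accumulating periodic orbits really contains the flow direction in $E^{cu}$ even at points of $C(\sigma)$ where the approximating periods blow up and $\|G\|$ may tend to zero; this is the delicate part of the argument.

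For the second (generic) assertion, I would take $\G_1$ to be the intersection of $\X^*(M)$ with a residual subset of vector fields for which every chain recurrent class equals a homoclinic class (the flow version of Bonatti--Crovisier genericity, due to Crovisier) and all critical elements are in general position (Kupka--Smale). On $\G_1$ the class $C(\sigma)$ coincides with the homoclinic class of any periodic orbit it contains, so all its periodic orbits are homoclinically related and share the same index. Applying the connecting lemma inside $C(\sigma)$ near $\sigma$ then produces a periodic orbit whose index is forced by the dominated splitting $E^{ss}\oplus E^{cu}$ (respectively $E^{cs}\oplus E^{uu}$ when $\sv(\sigma)<0$) to coincide with $\indx_p(\sigma)$; the same argument at any other $\rho\in C(\sigma)\cap\sing(G)$ yields $\indx_p(\rho)=\indx_p(\sigma)$ uniformly in the class. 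Throughout, the chief difficulty is controlling the perturbations near singularities without destroying either the star condition or the chain recurrence of $C(\sigma)$, which is the technical heart of~\cite{SGW14}.
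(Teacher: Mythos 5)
The paper does not actually prove this statement; its proof is the single line ``This is obtained in~\cite[Theorem 3.6]{SGW14}.'' So there is no in-paper argument to compare against. Your proposal is, in effect, an attempt to reconstruct Shi--Gan--Wen's proof, and at the strategic level it points in the right direction: closing/connecting to get periodic approximations, uniform domination from Theorems~\ref{dp}, \ref{thm:pp}, \ref{thm:p1}, a contradiction argument for $\sv(\rho)\neq 0$, and a strong-stable intersection argument for the Lorenz-like condition. But there are two places where the sketch does not close the loop.

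First, the key step $W^{ss}(\rho)\cap C(\sigma)=\{\rho\}$ rests on ``the flow direction at each point of $C(\sigma)$ lies in $E^{cu}$,'' which you yourself flag as delicate. You cannot get this from domination alone. The argument in \cite[Lemma~6.1]{AraPac2010} needs the stable bundle to be \emph{uniformly contracting} (not merely dominated) to force $G$ out of it; here all you have produced by taking limits of periodic splittings is a dominated splitting, and exactly what you want to conclude is that it is partially hyperbolic. This circularity is what~\cite{SGW14} breaks using Liao's scaled (extended) linear Poincar\'e flow together with Liao's shadowing and estimate lemmas, a machinery your sketch does not invoke. Relatedly, identifying $T_p W^{ss}(\rho)$ with the stable subspace of the dominated splitting at a point $p\neq\rho$ of $C(\sigma)$ is not automatic: you need to know the dimensions agree and then appeal to uniqueness of dominated splittings, and the dimension matching is itself part of what is being proved.

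Second, for the generic statement, ``chain recurrent classes are homoclinic classes for a residual subset'' is a periodic-orbit statement and is problematic for classes containing singularities; for star flows one first has to show that nontrivial singular classes contain periodic orbits and that their indices are pinned down by the sign of the saddle value, which is the content of several intermediate lemmas in~\cite{SGW14} (their index-comparison and locally-homogeneous genericity results, reflected here in Theorem~\ref{thm:Loc-Homoge}). So while the strategy is sound, the sketch leaves the genuinely hard parts open; given that the paper itself simply cites~\cite{SGW14}, I would recommend doing the same rather than attempting a self-contained reproof.
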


\begin{proof}
  This is obtained in~\cite[Theorem 3.6]{SGW14}.
\end{proof}

Next result ensures that, generically among star vector fields, chain
recurrent classes are locally homogeneous.

\begin{theorem}\label{thm:Loc-Homoge}
  For a $C^1$ generic star vector field $G$ and any chain recurrent
  class $C$ of $G$, there is a neighborhood $U$ of $C$ in $M$ whose
  all the critical elements share the same periodic index with the
  critical elements within $C$.
\end{theorem}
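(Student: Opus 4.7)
The plan is to combine Theorem~\ref{thm:chainrecurrent1} with Conley theory and a refined genericity argument, proceeding in three stages.

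First, I would refine $\G_1$ to a residual subset $\G_2\subset\X^*(M)\cap\G_1$ for which two further generic properties hold: (a) every chain recurrent class $C$ of every $G\in\G_2$ admits a fundamental system of filtrating (trapping region-type) isolating neighborhoods, a consequence of Conley's attractor-repellor decomposition combined with Bonatti--Crovisier's connecting lemma adapted to flows; and (b) each nontrivial chain recurrent class $C(\gamma)$ of a hyperbolic periodic orbit $\gamma$ coincides with the homoclinic class $H(\gamma)=\close{W^u(\gamma)\pitchfork W^s(\gamma)}$, which is also $C^1$ generic. These are the standard ingredients needed to upgrade equivalence classes of chain attainability into dynamically coherent invariant pieces.

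Next, for $G\in\G_2$ and a chain recurrent class $C$, I would show that all critical elements inside $C$ share a common periodic index $i_C$. For singularities this is exactly Theorem~\ref{thm:chainrecurrent1}. For periodic orbits, the star condition ensures hyperbolicity of every periodic orbit in a neighborhood of $C$, so their index function is locally constant; using property (b) and the fact that a transverse heteroclinic connection between two hyperbolic periodic orbits of different indices, combined with the connecting lemma, would produce a non-hyperbolic periodic orbit for a $C^1$ nearby vector field (violating the star property in $\U$), one forces all periodic orbits in $C$ to share an index. To reconcile the index of periodic orbits with that of singularities in $C$, one uses the Lorenz-like structure provided by Theorem~\ref{thm:chainrecurrent1}: the partially hyperbolic splitting at $\sigma$ extends along the class to periodic orbits accumulating $\sigma$ (by the techniques of~\cite{SGW14}), pinning the periodic index to $\indx_p(\sigma)$.

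Finally, I would extend homogeneity to a neighborhood $U$ by applying property (a): choose an isolating filtrating neighborhood $U$ of $C$ small enough that the maximal invariant set $\bigcap_{t\in\RR}\phi_t(U)$ lies in an arbitrarily small Hausdorff neighborhood of $C$, and such that no other chain recurrent class of $G$ meets $U$. Any critical element $\gamma$ of $G$ in $U$ is then invariant and chain recurrent, hence lies in some chain class contained in $U$, which must equal $C$ by the isolation obtained from (a). Therefore $\gamma\subset C$ and $\indx_p(\gamma)=i_C$.

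The main obstacle will be the second stage, specifically matching the common index of periodic orbits with that of singularities within $C$: one must carefully exploit the extension of the partially hyperbolic splitting at Lorenz-like singularities out along the chain recurrent class, together with perturbation arguments in $\X^*(M)$, to rule out coexistence of periodic orbits with different indices accumulating the same singularity. This is where the delicate star-flow technology of \cite{SGW14}, rather than soft Conley-theoretic arguments alone, becomes indispensable.
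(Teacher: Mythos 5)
The paper does not actually prove this theorem: it is cited verbatim from \cite[Theorem 5.7]{SGW14}, so your proposal is being compared against a reference rather than against an argument. Your sketch is an attempt to reconstruct a proof, and it contains a genuine gap at its structural core.

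The critical flaw is in your property (a). You assert that, on a residual set $\G_2$, \emph{every} chain recurrent class admits a fundamental system of filtrating isolating neighborhoods, and you derive this from ``Conley's attractor--repellor decomposition combined with Bonatti--Crovisier's connecting lemma.'' This is not a known generic property, and in fact it is false for $C^1$-generic vector fields in general (chain classes can be non-isolated, accumulated by infinitely many other classes). For \emph{star} flows with singularities, the assertion that every chain class is isolated is essentially equivalent to deep open conjectures about singular star flows (the singular star conjecture); it is precisely the kind of statement \cite{SGW14} is working towards, not something one may assume as a ``standard ingredient.'' Your step 3 rests entirely on (a): you pick a filtrating neighborhood $U$ of $C$ so that any compact invariant set in $U$ is forced into $C$. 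Without (a) that argument collapses. The actual argument in \cite{SGW14} avoids this entirely: it works by contradiction, taking a sequence of critical elements $\gamma_n$ of the wrong periodic index whose Hausdorff limits meet $C$, and uses the \emph{uniform} hyperbolicity estimates of Theorems~\ref{thm:pp} and \ref{thm:p1} (uniform Lyapunov bounds and uniform angle bounds for periodic orbits of all nearby star flows), together with the connecting lemma for $C^1$-generic systems, to produce a critical element of the wrong index \emph{inside} $C$, contradicting index-homogeneity of $C$ itself. No isolation of the chain class is required.

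Two further remarks. First, your step 2 correctly identifies the delicate point---reconciling the periodic index of periodic orbits in $C$ with the periodic index $\indx_p(\sigma)$ of a Lorenz-like singularity $\sigma\in C$---but you only gesture at it (``by the techniques of \cite{SGW14}''). This is the actual technical content being cited and cannot be bypassed by soft Conley theory; any proof of the theorem must work through the partially hyperbolic splitting at $\sigma$ and the Liao/linking-type lemmas that transfer index information from periodic orbits to singularities. Second, your 1(b) (generic coincidence of chain classes with homoclinic classes of periodic orbits) is correct and is indeed used in \cite{SGW14}, but it plays a different role there: it is used to propagate index information along accumulating periodic orbits, not to establish isolation. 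In summary, the approach as written would not compile into a proof: replace the isolation hypothesis (a) with the accumulation-plus-perturbation scheme, and supply the omitted index-matching argument, or simply cite \cite[Theorem 5.7]{SGW14} as the paper does.
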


\begin{proof}
  This is deduced in~\cite[Theorem 5.7]{SGW14}.
\end{proof}

Now we combine the previous results with Theorem~\ref{thm:star} and
Theorem~\ref{thm:attracting-Lorenz-hyp-sing} to prove
Corollary~\ref{mcor:starattractsechyp}.

\begin{proof}[Proof of Corollary~\ref{mcor:starattractsechyp}]
  Let $G\in\X^1(M)$ be robustly expansive admitting a transitive attractor
  with a trapping region $U_0\subset M$ on a $C^1$ neighborhood
  $\U\subset\X^1(M)$ of $G$. Then each $Y\in\U$ is a star vector field in $U$ by
  Theorem~\ref{thm:star} and $C(y)$, for each $y\in\Lambda_Y(U_0)$, equals
  $\Lambda_Y(U_0)=\omega(x(Y))$ for some $x(Y)\in U$ with dense forward orbit,
  and so $C(y)$ is nontrivial.

  Note that we arrive at this same conclusion if we start with a
  robustly transitive attractor $\Lambda_G(U_0)$ of a \emph{star}
  vector field $G$.

  If $\Lambda\cap\sing(G)=\emptyset$, then $\Lambda$ is hyperbolic
  (and so sectional-hyperbolic) since $G$ is a non-singular star
  vector field in $U$, by~\cite{GW2006}.

  Otherwise, every $\sigma\in\Lambda\cap\sing(G)$ is Lorenz-like, by
  Theorem~\ref{thm:chainrecurrent1}. Moreover, since $W^u_\sigma\subset\Lambda$,
  then every equilibria $\rho$ in $\Lambda$ must satisfy $\sv(\sigma)>0$. In
  addition, this property persist for all equilibria in $U$ for all $Y\in\U$ by
  the star property and, since $C(\sigma_Y)=\Lambda_Y(U_0)$ is non-trivial, we
  conclude that the periodic indices of all critical elements of $Y$ in $U$
  coincide, because we can choose $Y\in\G_1\cap\U$ from
  Theorems~\ref{thm:chainrecurrent1} and~\ref{thm:Loc-Homoge}.

  Hence we have hypothesis $(H1)$ and $(H2)$ of
  Theorem~\ref{thm:attracting-Lorenz-hyp-sing} for some $0<i+1<\dim M$
  and then $\Lambda$ is a sectional-hyperbolic set. The proof is
  complete.
\end{proof}


  \subsection{Robust chaotic attracting sets and
    sectional-hyperbolicity for $3$-flows}
\label{sec:robust-chaoticity}

We provide now proofs of Corollaries~\ref{mcor:sing-hyp-chaotic},
\ref{mcor:dcu2robchaotic} and~\ref{mcor:robust-chaotic-sing-hyp}

\begin{proof}[Proof of Corollary~\ref{mcor:sing-hyp-chaotic}]
  The assumption of sectional-hyperbolicity on an isolated proper
  subset $\Lambda$ with isolating neighborhood $U$ ensures that the
  maximal invariant subsets
  $\Lambda_Y(U)=\cap_{t>0}\overline{ \phi_(U)}$ for all $C^1$ nearby
  vector fields $Y$ are also sectional-hyperbolic attracting
  sets. Therefore, to deduce robust chaotic behavior in this setting
  it is enough to show that $\Lambda_Y(U)$ is chaotic with the same
  constant as $\Lambda$.

  Let $\Lambda$ be a sectional-hyperbolic attracting set for a $C^1$
  vector field $G$. Then there exists a strong-stable manifold
  $\cW^s_x$ through each $x\in U$ and we choose an adapted family of
  cross-sections $\Xi$ satisfying all the properties explained in
  Section~\ref{sec:preliminary-results}. Moreover, we can find a pair
  $\epsilon_0,\delta_0$ satisfying
  Theorem~\ref{thm:expansivepoincare}.

  We claim that $\Lambda$ is past chaotic with constant
  $r_0=\delta_0$. Indeed, arguing by contradiction, let us assume that
  there exists a neighborhood $U$ of $x$ so that
  $d(\phi_{-t}U, \phi_{-t}x) \le r_0$ for all $t>0$. Then we can find
  $y\in\cW^s_x\cap U$ such that $y\neq x$ and
  $d\big(\phi_{-t}y, \phi_{-t}x\big) \le r_0$ for every $t>0$. Since
  $\cW^s_x$ is uniformly contracted by the flow in positive time,
  there exists $\lambda>0$ such that
  $ d(y,x)\le Const\cdot e^{-\lambda t} d\big(\phi_{-t}y,
  \phi_{-t}x\big) \le Const\cdot\epsilon e^{-\lambda t} $ for all
  $t>0$. This contradicts the choice of $y\neq x$ and proves the
  claim.

  To obtain future chaotic behavior, we again argue by contradiction: we assume
  that $\Lambda$ is \emph{not future chaotic}. Then, for any given $\delta>0$,
  we can find a point $x\in\Lambda$ and an open neighborhood $V$ of $x$ such
  that the future orbit of each $y\in V$ is $\delta$-close to the future orbit
  of $x$, that is, $d\big(\phi_t y, \phi_t x\big)\le\delta$ for all $t>0$.

  We assume without loss of generality that we have chosen $\delta>0$ smaller
  than:
  \begin{itemize}
  \item half the size of the local stable leaves of points of the
    attracting set, and
  \item the size of the local unstable manifolds of the possible
    equilibria of $\Lambda$, and
    
  \item the value $\delta_0$ given by
    Theorem~\ref{thm:expansivepoincare} applied to $\Lambda$.
  \end{itemize}
  First, $x$ is not an equilibrium, for $y\in V\cap\cW^u_x$ would be
  sent $\delta$-away from $x$ for some $t>0$. Likewise, $x$ cannot
  be in the stable manifold $\cW^s_\sigma$ of a singularity
  $\sigma\in\Lambda$. For otherwise we can take a transversal disk $D$
  to $\cW^s_\sigma$ through $x$ contained in $V$, and use the
  Inclination Lemma (or $\lambda$-Lemma) to conclude that for any given
  $0<\xi<\delta$ and $T>0$ we can find $t>T$ and a neighborhood
  $W\subset D$ of $x$ such that $\phi_tW$ is $\xi$-$C^1$-close to
  $\cW^u_\sigma$ and $\phi_tx$ is $\xi$-close to $\sigma$. In
  particular, there exists $y\in W$ such that
  \begin{align*}
    d(\phi_t y,\phi_t x)
    \ge
    d(\phi_ty,\sigma) - d(\sigma,\phi_tx)
    \ge
    2\delta-\xi >  \delta.
  \end{align*}
  Therefore, $\omega(x)$ contains some regular point $z$ and we can
  take $\Sigma\in\Xi$ a transversal section to the vector field which
  is crossed by the positive trajectory of $z$.

  Hence, there are infinitely many times $t_n\nearrow\infty$ such that
  $x_n:=\phi_{t_n}x\in\Sigma$ and $x_n\to z$ when
  $n\nearrow\infty$. The assumption on $V$ ensures that each $y\in V$
  admits also an infinite sequence $t_n(y)\nearrow\infty$ satisfying
  \begin{align*}
    y_n:=\phi_{t_n(y)}(y)\in\Sigma \quad\text{and}\quad
    d(y_n, x_n)\le \delta.
  \end{align*}
  We can assume without loss of generality that $y\in V$ does not
  belong to $\cup_{t\in\RR}\cW^s_{\phi_t x}$, since this is a $C^1$
  immersed submanifold of $M$. Now we consider $\cW^s_{x_n}(\Sigma)$
  and $W^s_{y_n}(\Sigma)$.

  We have reproduced the setting Theorem~\ref{thm:expansivepoincare}
  with $h$ the identity, and so we must have $y\in\cW^s_{\phi_\eta x}$
  for some $\eta>0$, which contradicts the choice of $y$.  Hence,
  $\Lambda$ is future chaotic with constant $r_0=\delta_0$, and
  concludes the proof.
\end{proof}

To prove Corollary~\ref{mcor:dcu2robchaotic} we need the following technical
result.

\begin{proposition}\label{pr:simplespec} Let us fix $G\in\X^1(M)$ admitting an
  isolated set $\Lambda=\Lambda_G(U_0)$ and $\U\subset\X^1(M)$ be a
  $C^1$-neighborhood of $G$.
  \begin{enumerate}
  \item If a singularity $\sigma$ of $G$ in $\Lambda$ is not
    hyperbolic, then there exists a vector field $Y\in\U$ for which
    $\sigma$ is a non-hyperbolic equilibrium in $\Lambda_Y(U_0)$ such
    that $\spec(DY(\sigma))\cap i\RR =\{\pm i\omega\}$ for some
    $\omega\in\RR$ and
    \begin{enumerate}
    \item either $\omega=0$ and the corresponding eigenspace $E_0$ is
      one-dimensional;
    \item or $\omega\neq0$ and the corresponding eigenspace $E_\omega$
      is two-dimensional.
    \end{enumerate}

  \item If a periodic orbit $\gamma$ of $G$ in $\Lambda$ is not
    hyperbolic, then there exists a vector field $Y\in\U$ for which
    $\gamma$ is non-hyperbolic periodic orbit in $\Lambda_Y(U_0)$
    whose Poincar\'e first return map $f_Y$ to the cross-section
    $\Sigma=\exp_p\big(B(0,r)\cap N_p\big)$ through $p\in\gamma$, for
    some $0<r<r_0$, satisfies
    \begin{enumerate}
    \item $\spec(Df(p))\cap\sS^1=\{\lambda,\bar\lambda\}$ for some
      $\lambda\in\CC$;
    \item if $\lambda\in\RR$ ($\lambda=\pm1$), then the
      corresponding generalized eigenspace $E_\lambda$ is
      one-dimensional;
    \item if $\lambda\in\sS^1\setminus\RR$, then 
      the corresponding generalized eigenspace $E_\lambda$ is
      two-dimensional.
    \end{enumerate}
  \item In either case, for any $\xi>0$ there exists
    $Z\in\U$ and $p,q\in U_0$ so that, if $\phi_t$ is the
    flow of $Z$, then there exists $h\in S(\RR)$ such that
    $d\big(\phi_t p, \phi_{h(t)}q)<\xi$ for all $t\in\RR$
    but the orbits $\cO_Z(p)$ and $\cO_Z(q)$ are
    distinct. In particular, the result holds with $h=Id$ in
    the singular case.
  \item Moreover, in the periodic case, for any $\xi>0$ we can find
    $Z\in\U$ and $p,q\in\Sigma\cap U_0$ hyperbolic periodic points for
    $Z$ such that $d(p,q)<\xi$ and whose indices satisfy
    $\indx(\cO_G(p))=\indx(\cO_Z(p))=\indx(\cO_Z(q))-1$.
  \end{enumerate}
\end{proposition}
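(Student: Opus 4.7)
The plan is to combine standard $C^1$-perturbation techniques (Franks-type lemmas for flows) with classical local bifurcation theory, addressing each part in turn.

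\textbf{Simplifying the spectrum (parts (1) and (2)).} The first step is to use a $C^1$-small perturbation supported in a small neighborhood of the non-hyperbolic critical element to put its spectrum into a generic codimension-one non-hyperbolic form. For a non-hyperbolic singularity $\sigma$, I would apply a Franks-type lemma for flows: it allows us to realize any small perturbation of $DG(\sigma)$ by a $C^1$-small perturbation of $G$ supported near $\sigma$. Among linear maps that fail to be hyperbolic, the codimension-one strata consist precisely of those with either a simple real eigenvalue equal to $0$ or a simple pair of pure imaginary eigenvalues $\pm i\omega$; by a generic choice of perturbation target I drive the spectrum into one of these strata, yielding the required $Y\in\U$. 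Since $U_0$ is a trapping region and $\sigma$ persists in position under a $C^1$-small perturbation, $\sigma$ remains in $\Lambda_Y(U_0)$. For a non-hyperbolic periodic orbit $\gamma$, I apply the analogous statement to the derivative $Df(p)$ of the Poincar\'e first return map, realizing any $C^1$-small perturbation of $Df(p)$ by perturbing $G$ in a small flowbox around a fundamental domain of $\gamma$, and again reducing to the codimension-one non-hyperbolic strata on the unit circle $\sS^1$, namely a simple eigenvalue at $\pm 1$ or a simple conjugate pair in $\sS^1\setminus\RR$.

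\textbf{Creating nearby distinct orbits (part (3)).} After the normalization above I perform a further $C^1$-small bifurcation-type perturbation supported in a smaller neighborhood of the critical element. In the singular case $\omega=0$, a saddle-node perturbation along the one-dimensional center direction yields two distinct equilibria $p,q$ arbitrarily close to $\sigma$, whose orbits trivially satisfy $d(p,q)<\xi$ with $h=\mathrm{Id}$. In the singular case $\omega\neq 0$, a Hopf bifurcation produces a small periodic orbit near $\sigma$, and taking $p$ on this orbit and $q=\sigma$ keeps $\phi_t p$ in a $\xi$-tube around $\sigma$ for all $t$, again with $h=\mathrm{Id}$. In the periodic case with $\lambda=\pm 1$, a saddle-node (resp.\ period-doubling) perturbation of $f$ produces two close periodic points (resp.\ a periodic point of roughly twice the period) near $p$; here $h\in S(\RR)$ is the reparametrization matching the (slightly different) periods so the orbits remain $\xi$-close. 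In the periodic case with $\lambda\in\sS^1\setminus\RR$, a Neimark--Sacker bifurcation produces an invariant circle for $f$, and taking $q$ on it provides an orbit that shadows $\gamma$ with an appropriate $h$.

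\textbf{Hyperbolic pairs with indices differing by one (part (4)).} In each of the three periodic sub-cases one can generically unfold the bifurcation so that both periodic orbits are hyperbolic while still inside $U_0$: in the saddle-node case the two branches are immediately hyperbolic with indices differing by one; in the period-doubling case the continued orbit $\cO_Z(p)$ and the doubled orbit $\cO_Z(q)$ are hyperbolic with indices differing by one (the extra real eigenvalue crossing $-1$ is responsible for the jump); and in the Neimark--Sacker case a small additional perturbation destroys the invariant circle leaving exactly two hyperbolic periodic orbits whose indices differ by one, because the pair of eigenvalues crossing $\sS^1$ contributes a two-dimensional unstable sub-bundle to one of them. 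In all three scenarios the continuation $\cO_Z(p)$ of the original $\gamma$ keeps its index $\indx(\cO_G(p))$, so the required identity $\indx(\cO_G(p))=\indx(\cO_Z(p))=\indx(\cO_Z(q))-1$ holds (swapping the roles of $p,q$ if necessary).

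\textbf{Main obstacle.} The delicate point is ensuring simultaneously that every perturbation stays inside the prescribed neighborhood $\U$, that the newly produced critical elements lie in $\Lambda_Z(U_0)$ (guaranteed because $U_0$ is a trapping region and the new elements live in an arbitrarily small neighborhood of the original one), and that the Franks-type realization in the periodic case does not accidentally destroy the non-hyperbolicity before we apply the bifurcation; handling this requires carefully choosing the perturbations in two disjoint time intervals along $\gamma$ so that the desired spectrum of $Df$ is produced before the bifurcation parameter is turned on.
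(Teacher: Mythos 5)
The paper provides no in-text argument for this proposition: the proof consists of a single citation to \cite[Theorem 4.3]{ArbSenSod12}, described as ``a simple adaptation,'' so there is nothing to compare line by line. The route you take---a Franks-type lemma to normalize the spectrum of the critical element into a codimension-one non-hyperbolic stratum, then a localized perturbation of bifurcation type to spawn nearby distinct orbits---is the standard $C^1$-perturbation strategy for results of this kind and is plausibly what the cited theorem does. Parts (1), (2) and (3) are conceptually fine, although you should be aware that Hopf and Neimark--Sacker bifurcations are $C^\infty$-theoretic statements; in $C^1$ perturbation arguments one usually realizes the desired local model directly by an explicit $C^1$-small deformation supported in a flowbox (or a Franks-type realization of a nonlinear germ), which is both simpler and avoids any unstated smoothness hypotheses.

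For part (4) in the complex eigenvalue case ($\lambda\in\sS^1\setminus\RR$) the justification as written does not hold up. You claim the two resulting hyperbolic orbits have indices differing by one ``because the pair of eigenvalues crossing $\sS^1$ contributes a two-dimensional unstable sub-bundle to one of them''; but a two-dimensional change in the unstable bundle would produce an index gap of $2$, not $1$, so this cannot be the mechanism. The gap of exactly one must come from a one-dimensional splitting, which in the usual $C^1$ argument is achieved by first rotating the complex pair to a rational angle $e^{\pm 2\pi i k/n}$ via Franks' lemma, and then perturbing $f^n$ inside the resulting two-dimensional invariant plane to produce a pair of nearby hyperbolic fixed points of $f^n$ (hence period-$n$ points of $f$) whose stable dimensions, restricted to that plane, differ by exactly one. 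You should also check that your perturbation leaves the continuation of $\gamma$ at the index $\indx(\cO_G(p))=\dim E^s_\gamma$ demanded by the equality $\indx(\cO_G(p))=\indx(\cO_Z(p))$: that identity pins down which of the two orbits is the continuation of $\gamma$, so the parenthetical ``swapping the roles of $p,q$ if necessary'' is not a degree of freedom you actually have, and the critical eigenvalue(s) must be pushed outward. With these repairs the proposal is sound and consistent with the approach of \cite[Theorem 4.3]{ArbSenSod12}.
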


\begin{proof}
  This is a simple adaptation of the proof of \cite[Theorem
  4.3]{ArbSenSod12}.
\end{proof}

We can now prove Corollary~\ref{mcor:dcu2robchaotic} as a
application of Theorem~\ref{thm:attracting-Lorenz-hyp-sing} and
Corollary~\ref{mcor:sing-hyp-chaotic}.

\begin{proof}[Proof of Corollary~\ref{mcor:dcu2robchaotic}]
  It is enough to assume that $\Lambda=\Lambda_G(U)$ is a robustly
  chaotic partially hyperbolic attracting set with $d_{cu}=2$ on the
  trapping region $U$ for a $C^1$ vector field $G$, and show that
  $\Lambda$ must be sectional-hyperbolic.

  Robust chaoticity implies that
  in $\X^1(M)$ there are no sinks (otherwise it would contradict
  future chaoticity) nor sources (otherwise it would contradict
  past chaoticity) in $U$ with respect to each vector field $Y\in\V$. This
  argument prevents the existence of either periodic attracting or
  repelling orbits, or attracting or repelling equilibria.

  Since all critical elements have index
  $\ge d_s = \dim M-d_{cu}=\dim M-2$, all equilibria $\sigma\in U_0$
  must be hyperbolic of saddle-type with index $d_s$ or $d_s+1$. For
  otherwise, either $\sigma$ is hyperbolic with index $\dim M$, a
  sink; or $\sigma$ fails to be hyperbolic and we can use item (3) of
  Proposition~\ref{pr:simplespec} to obtain a pair of arbitrarily
  close equilibria whose orbits forever remain close for a vector
  field also arbitrarily $C^1$ close to $G$, contradicting robust
  chaoticity.

  Analogously, all periodic orbits in $U$ for $Y\in\V$ are hyperbolic
  of saddle-type with index $d_s$. For otherwise, either we have a
  hyperbolic periodic orbit of $Y$ with index $d_s+1$, a sink; or we
  have a non-hyperbolic periodic orbit with index $d_s$. Hence, by
  arbitrarily small $C^1$ perturbations of the vector field, we would
  find, through item (4) of Proposition~\ref{pr:simplespec}, a
  hyperbolic periodic orbit again with index $d_s+1$. This contradicts
  the $C^1$ robust chaotic assumption.

  Altogether, we have shown that $G$ satisfies hypothesis (H1) and
  (H2) of Theorem~\ref{thm:attracting-Lorenz-hyp-sing} with
  $i=d_s$. The conclusion of
  Theorem~\ref{thm:attracting-Lorenz-hyp-sing} completes the proof of
  the corollary.
\end{proof}

The proof of Corollary~\ref{mcor:robust-chaotic-sing-hyp} is
analogous.

\begin{proof}[Proof of Corollary~\ref{mcor:robust-chaotic-sing-hyp}]
  Let $\Lambda=\Lambda_G(U)$ be a robustly chaotic attracting set on
  the trapping region $U$ for a $C^1$ vector field $G$ in a
  $3$-manifold $M^3$. As in the previous proof, there exists a $C^1$
  neighborhood $\V$ of $G$ in $\X^1(M^3)$ so that there are no sinks
  nor sources in $U$ with respect to each vector field $Y\in\V$.

  Since the ambient manifold is three-dimensional, all equilibria must
  be hyperbolic of saddle-type with index $1$ or $2$.  For otherwise,
  either we have a hyperbolic fixed sink (index $3$) or source (index
  $0$); or a non-hyperbolic equilibria. Then thorugh item (3) of
  Proposition~\ref{pr:simplespec}, after an arbitrarily small $C^1$
  perturbation, we obtain a pair of arbitrarily close equilibria whose
  orbits remain close at all times. This would contradict the robust
  chaotic assumption.

  Analogously, all periodic orbits in $U$ for $Y\in\V$ are hyperbolic
  of saddle-type with index $1$. For otherwise, either we have a
  periodic sink (index $2$) or a source (index $0$), or else a
  non-hyperbolic periodic orbit $\gamma$.  In the latter case
  $\indx(\gamma)=0$ or $1$ and  we can use item
  (4) of Proposiion~\ref{pr:simplespec} to obtain a hyperbolic periodic orbit
  arbitrarily close to $\gamma$ for a $C^1$ nearby vector field with
  index $0$ or $2$. This contradicts again the robust chaotic assumption. 

  We have shown that $G$ satisfies hypothesis (H1) and (H2) of
  Theorem~\ref{thm:attracting-Lorenz-hyp-sing} with $i=1$. The
  conclusion follows.
\end{proof}


\def\cprime{$'$}

\bibliographystyle{abbrv}


\end{document}